\documentclass[a4paper]{amsart}
\usepackage[english]{babel}
\usepackage{amssymb}
\usepackage{amsmath}
\usepackage{amsthm}
\usepackage{graphicx}
\usepackage{fancyhdr}
\usepackage{bbm}
\usepackage{multirow}
\newfont{\bssten}{cmssbx10}
\newfont{\bssnine}{cmssbx10 scaled 900}
\newfont{\bssdoz}{cmssbx10 scaled 1200}

\usepackage{latexsym,amssymb,amsthm,amsxtra}
\usepackage{amsmath}
\usepackage{stmaryrd}
\usepackage{mathrsfs}
\usepackage{tikz}
\usepackage{pgf}
{\bf}{\it}
{\bf}{\it}
\newtheorem{theorem}{Theorem}
\newtheorem{definition}{Definition}
\newtheorem{lemma}{Lemma}
\newtheorem{remark}{Remark}
\newtheorem{proposition}{Proposition}

\newtheorem{ex}{Example}

\def\/{\, | \,}

\def\ind{{\mathchoice {\rm 1\mskip-4mu l} {\rm 1\mskip-4mu l}
		{\rm 1\mskip-4.5mu l} {\rm 1\mskip-5mu l}}}
\def\N{{\mathbb N}}

\def\P{{\mathbb P}}


\newcommand{\B}{{\mathcal B}}
\newcommand{\A}{{\mathcal A}}

\def\C{{\mathcal C}}
\def\I{{\mathbb I}}

\def\esp#1{{\mathbb E}\left[#1\right]}
\def\cop#1{\underline{#1}}

\newcommand{\pae}[1]{\mbox{$\lfloor \kern-1pt #1 \kern-1pt \rfloor$}}
\newcommand{\paep}[1]{\mbox{$\lceil \kern-1pt #1 \kern-1pt \rceil$}}

\newcommand\ccc{\circledcirc}

\def\E{{\mathcal E}}

\def\N{{\mathbb N}}
\def\R{{\mathbb R}}
\def\maV{{\mathcal V}}
\def\maZ{{\mathcal Z}}
\def\maP{{\mathcal P}}

\def\maE{{\mathcal E}}

\def\mbX{{\mathbb X}}
\def\mbW{{\mathbb W}}
\def\bw{\mathbf x}

\def\v{{\--}}
\def\pv{{\not\!\!\--}}

\def\bw{\mathbf w}
\newcommand\td[1]{\overline{#1}}
\newcommand\pr[1]{{\mathbb P}\left[#1\right]}

\newcommand\suite[1]{\left\{#1\right\}_{n\in\N}}

\newcommand\gre{\textbf{e}}

\newcommand\maM{{\mathcal M}}

\newcommand\maA{{\mathcal A}}

\newcommand\maI{{\mathcal I}}



\def\isdef{\triangleq}
\def\fcfm{\textsc{fcfm}}

\DeclareMathOperator{\card}{Card}

\def\V{\mathcal V}
\def\A{\mathcal A}
\def\B{\mathcal B}
\def\C{\mathcal C}
\def\bV{\mathbf V}
\def\oV{\overline{\V}}
\def\w{\mathbf w}
\def\E{\mathcal E}
\def\S{\mathcal V_1}
\def\N{\mathbb N}
\def\P{\mathbb P}
\def\isdef{\triangleq}
\def\fcfm{\textsc{fcfm}}

\def\ll{[\![}
\def\rr{]\!]}
\def\mudeg{\mu_{\tiny{\mbox{deg}}}}
\def\out{{\mathcal S}}

\usepackage[utf8]{inputenc}

\begin{document}
	
	\title[Stochastic matching model with self-loops]{A general stochastic matching model on multigraphs}
	\author[Begeot, Marcovici, Moyal, Rahme]{Jocelyn Begeot, Irène Marcovici, Pascal Moyal, and Youssef Rahme}
	
	\begin{abstract} 
		We extend the general stochastic matching model on graphs introduced in \cite{MaiMoy16}, to matching models on multigraphs, that is, graphs with self-loops. 
		The evolution of the model can be described by a discrete time Markov chain whose positive recurrence is investigated. 
		Necessary and sufficient stability conditions are provided, together with the explicit form of the stationary probability in the case where the matching policy is `First Come, First Matched'.   
	\end{abstract}

\maketitle
\section{introduction}
\label{introduction}
Over the past decade, an increasing interest has been dedicated to stochastic systems in which incoming elements are matched according to specified compatibility rules. 
This is, first, a natural representation of service systems in which customers and servers are of different classes, and where designated classes of servers can serve 
designated classes of customers. For this general class of queueing models, termed skill-based queueing systems, it is then natural to investigate the conditions for the existence of a stationary state, 
and under these conditions, to design and control the model at best, for given performance metrics (end-to-end delay, matching rates, fairness, etc.). Such models are classical queueing systems, in the sense that 
there is a dissymmetry between customers and servers: customers come and depart the system, whereas servers are part of the `hardware', remain in the system, and switch to the service of another customer when they have completed one (with possible vacation times in-between services). 

In \cite{CKW09} (see also \cite{AdWe}), a variant of such skill-based systems was introduced, which are now commonly referred to as `Bipartite matching models' (BM): couples customer/server enter the system at each time point, and customers and servers play symmetric roles: exactly like customers, servers come and go into the system. Upon arrival, they wait for a compatible customer, and as soon as they find one, leave the system together with it. These settings are suitable to various fields of applications, among which, blood banks, organ transplants, housing allocation, job search, dating websites, and so on.  In both references, compatible customers and servers are matched according to the FCFS (`First Come, First Served') service discipline. 
In \cite{ABMW17}, a subtle dynamic reversibility property is shown, entailing that the stationary state of such systems under FCFS, can be obtained in a product form. Moreover, a sub-additivity property is proved, allowing (under stability conditions) the construction of a unique stationary bi-infinite matching of the customers and servers, by a coupling-from-the-past (CFTP) technique. 
Interestingly, the product-form of the stationary state can then be adapted to 
various skill-based queueing models as well, and in particular those applying (various declinations of) the so-called FCFM-ALIS (Assign the Longest Idling Server) service discipline - see e.g. \cite{AW14}, and various extensions of BM models in \cite{AKRW18,BC15,BC17}. 
 In \cite{BGM13}, the settings of  \cite{CKW09,AdWe} are generalized to more general service disciplines (termed `matching policies' in this context), and necessary and sufficient conditions for the stability of the system are introduced, which are functions of the compatibility graph and of the matching policy. Moreover, as opposed to the previously cited references, the results in \cite{BGM13} do not assume the independence between the types of the entering customer and the entering server. The system is then called Extended Bipartite Matching Model (EBM, for short), and suits  applications in which independence between the classes of the 
 customers and servers entering simultaneously cannot be assumed. In \cite{MBM18}, a CFTP result is obtained, showing the existence of a unique bi-infinite matching in various cases for EBM models, and for a broader class of matching policies than FCFS, thereby generalizing the results of \cite{ABMW17}. 
 
For the purpose of modeling concrete systems, the need then arose to extend these different models. Indeed, in many applications the assumption of pairwise arrivals may appear somewhat artificial, and it is more realistic to assume that arrivals are simple. In addition, all the aforementioned references assume that the compatibility 
 graph is bipartite, namely there are easily identifiable classes of {\em servers} and classes of {\em customers}, whatever these mean: donors/receivers, houses/applicants, jobs/applicants, and so on. However, in many cases the context requires that the compatibility graph take a general (i.e., not necessarily bipartite) form. For instance, in dating websites, it is a priori not possible to split items into two sets of classes (customers and servers) 
 with no possible matches within those sets. Similarly, in kidney exchange programs, intra-incompatible couples donor/receiver enter the system, looking for a compatible couple to perform a `crossed' transplant. 
 Then, it is convenient to represent couples donor/receiver as  {\em single} items, and compatibility between couples 
 means that a kidney exchange can be performed between the two couples (the donor of the first couple can give to the receiver of the second, and the donor of the second can give to the receiver of the first). In particular, if one consider blood types as a primary compatibility criterion, the 
 compatibility graph between couples is naturally non-bipartite. 
 Motivated by these observations, a variant model was introduced in \cite{MaiMoy16}, in which items arrive one by one and the compatibility graph is general, i.e., not necessarily bipartite: specifically, in this so-called {\em General stochastic Matching model} (GM for short), items enter one by one in discrete time in a buffer, and belong to determinate classes in a finite set $\maV$. Upon each arrival, the class of the incoming item is drawn independently of everything else, from a distribution $\mu$ having full support $\maV$. A connected graph $G$ whose set of nodes is precisely $\maV$, determines the compatibility among classes. 
 Then, an incoming item is either immediately matched, if there is a compatible item in line, or else stored in a buffer. It is the role of the matching policy $\Phi$ to determine the match of the incoming item in case of a multiple choice. Then, the two matched items immediately leave the system forever. The {\em stability region} of the model, given $G$ and $\Phi$, is then defined as the set of measures $\mu$ such that the model is 
 	positive recurrent.  A necessary condition for the stability of GM models is provided in \cite{MaiMoy16} (see (\ref{eq:Ncond}) below). In particular, the latter condition is empty if and only the compatibility graph is bipartite (which partly justifies why items enter by pairs in BM and EBM models - otherwise the model could not be stabilizable). On another hand, it is proven in \cite{MaiMoy16} that the 
 matching policy `Match the Longest' has a maximal stability region, that is, the latter necessary condition is also sufficient (we then say that the latter policy is {\em maximal}). However, \cite{MoyPer17} shows  that in fact, aside for a particular class of graphs, random policies are never maximal, and that there always exists a strict priority policy that isn't maximal either. Then, by adapting the dynamic reversibility argument of \cite{ABMW17} to the GM models, \cite{MBM17} shows that the matching policy First Come, First Matched (FCFM) is also maximal, and derives the stationary probability in a product form. More recently, following the work of \cite{NS17}, 
 matching policies of the broader {\em Max-Weight} type (including `Match the Longest') are shown to be maximal, and drift inequalities allow to bound the speed of convergence to the equilibrium, and the first two moments of 
 the stationary state. Variants of the GM model to the case of (i) hypergraphical structures (i.e. matching items by groups of two or more) and (ii) graphical systems with reneging 
 are investigated, respectively in \cite{GW14,NS17,RM19} and \cite{JMRS20} (see also \cite{BDPS11}). 
 
Motivated again by concrete applications, in the present article we present a further extension of the GM model. Indeed, in various contexts, among which dating websites and peer-to-peer interfaces, it is natural to assume that items {\em of the same class} can be matched together. 
 Hence, the need to generalize the previous line of research to the case where the matching architecture is a {\em multigraph} (a graph admitting {\em self-loops}, that is, edges connecting nodes to themselves), rather than just a graph. This generalization is the core subject of the present paper. We show how several stability results of \cite{MaiMoy16,MBM17,JMRS20} can be generalized to the case of a multigraphical matching structure. As is easily seen, the buffer of a matching model on a multigraph is hybrid by essence: nodes admitting self-loops (if any)  admit at most one item in line, whereas nodes with no self-loops (if any) have unbounded queues. A matching model on a multigraph typically has a larger stability region than the corresponding model on a graph on which all self-loops are erased (the {\em maximal subgraph} of the latter -  see Definition \ref{def:restricted}), but the interplay between self-looped nodes and their non-self-looped neighbors needs to be clearly understood: intuitively, the arrival flows to self-looped nodes appear as auxiliary flows helping their neighboring non-self-looped nodes to stabilize their own queues - provided that the arrivals to self-looped nodes don't match too often with one another. 
 On another hand, in the extreme case where all nodes are self-looped, the model is finite and 
 has a flavor of statistical-physical system: it is an irreducible Markov chain on $\{0,1\}^{|\maV|}$ with local interactions - see Example \ref{ExampleSquare} below. 
 {\em En route}, by showing results for stochastic matching models on multigraphs, we show various results that have their own inner interest for GM models on graphs - see in particular Propositions \ref{prop:ncond} and \ref{prop:extppartite} and Sub-section \ref{subsec:iid} below.

This paper is organized as follows: we start by some preliminary in Section \ref{sec:prelim}, and in particular by introducing the main definitions and properties of multigraphs. 
In Section \ref{sec:model}, we formally introduce the present model. In Section \ref{sec:results}, we present our main results for GM models on multigraphs, among which, the maximality and the explicit product form of the stationary probability for the FCFM policy, and the maximality of Max-Weight policies. To illustrate these results, several examples are presented in Section \ref{sec:examples}. The proofs of our main results are then presented in Sections \ref{sec:ncond}, \ref{sec:FCFM} and \ref{sec:otherproofs}.

\section{Preliminary}
\label{sec:prelim}
\subsection*{General notation} 
We denote by $\R$ the set of real numbers, by $\N$ the set of non-negative integers and by $\N_+$ the subset of positive integers. For any $p,q \in \N_+$, we denote by $\llbracket p,q \rrbracket$ the integer interval $[p,q] \cap \N_+$. 

We let $\mathfrak{S}_n$ denote the symmetric group on the set $\llbracket 1,n \rrbracket$, i.e. the set of permutations of $\llbracket 1,n \rrbracket$.

For any finite set $A$, we denote by $|A|$ the cardinality of $A$. The set $A$ is often implicitly ordered, and identified with $\llbracket 1,|A| \rrbracket$. 
The set of probability measures having full support on $A$ is denoted by $\mathscr M(A)$. 

\subsection*{Words} For $k\in\N_+$ and $k$ finite sets $A_1,\dots,A_k$, we identify the cartesian product $A_1 \times A_2 \times\dots\times A_k$ with the set, 
denoted by $A_1A_2\dots A_k$, of \emph{words} of length $k$ whose $i$-th letter is an element of $A_i$, for all $i\in\llbracket 1,k \rrbracket$. In particular,  $A^k$ is identified with the set of words of length $k$ over the alphabet $A$. 
We then  
denote by $A^*\isdef \cup_{k\in\N}A^k$ the set of {finite words} over the alphabet $A$. We denote the \emph{length} of $w\in A^*$ by $|w|$, so that if $w\in A^k, |w|=k$.  

For $w \in A^*$ and $B\subset A$, we introduce the notation $|w|_B\isdef\card\{i\in\{1,\ldots,|w|\} :  w_i\in B\}$. For a letter $a\in A$, we denote simply by $|w|_a\isdef |w|_{\{a\}}$ the number of occurrences of the letter $a$ in the word $w$. 

The concatenation of $k$ words $w^1,w^2,\dots,w^k$ of $A^*$, that is, the word $w$ in which appear successively from left to right, the words $w^1, w^2, \dots, w^k$, is denoted by $w=w^1w^2\dots w^k$. In particular, for a word $w$ and a letter $i$, the word 
$wi$ denotes the concatenation of the word $w$ with the single-letter word $i$, in that order. 
Any word $w\in A^*$ of length $|w|=q$ is written $w=w_1w_2\dots w_q$, and for any $i\in\llbracket 1,n \rrbracket,$ we denote by $w_{[i]}$ the word of length $|w|-1$ obtained from $w$ by deleting its $i$-th letter $w_i$. The empty word (i.e. the unique word of $A^*$ of length 0) is denoted 
$\varepsilon$. For any word $w=w_1w_2...w_q$, the {\em prefix} of $w$ of length $k\le q$ is the word $w'=w_1w_2...w_k$.  

For any $w\in A^*$, we let $[w]\isdef(|w|_a)_{a\in A}\in \N^{|A|}$ be the {\em commutative image} of $w$. For any integer $q$, the vectors of $\N^q$ are denoted as 
$\bw\isdef(w(1),\dots,w(q))$. We let, for any $i\in \llbracket 1,q \rrbracket$, $\gre_i$ be the $i$-th vector of the canonical basis of $\R^q$.

\subsection*{Multigraphs}
Hereafter, a {\em multigraph} is given by a couple $G=(\maV,\maE)$, where $\maV$ is the (finite) set of nodes and $\maE\subset \maV\times \maV$ is the set of edges. All graphs considered hereafter are undirected, 
that is, $(u,v)\in \maE$ $\implies$ $(v,u)\in \maE$, for all $u,v \in \maV$.  We write $u \v v$ or $v \v u$ for $(u,v) \in \maE$, 
and $u \pv v$ (or $v \pv u$) else. Elements of the form $(v,v) \in \maE$, for $v\in\maV$, are called {\em self-loops}. 
For any multigraph $G=(\maV,\maE)$ and any $U\subset\maV$, we denote 
\[
\maE(U) \isdef \{ v \in \maV  :  \exists u \in U, \ u
\-- v\}
\]
the neighborhood of $U$, and for $u\in \maV$, we write for short $\maE(u)\isdef\maE(\{u\})$. The set $\maV$ can then be partitioned in $\maV =\maV_1 \cup \maV_2$, where $\maV_1\isdef \{u\in\maV:u \v u \}$ and $\maV_2\isdef \{u\in\maV: u\pv u\}$, i.e., $\maV_1$ contains all nodes from which a self-loop emanates, if any, and $\maV_2$ is the complement set of $\maV_1$ in $\maV$.
A multigraph having no self-loop, that is, a couple $G=(\maV,\maE)$ such that $\maV_1=\emptyset$, is simply a {\em graph}. 
Observe that, with respect to the classical notion of multigraphs, we assume hereafter that all edges are simple. 
For any node $i\in\maV$, we denote by 
\[\mbox{deg}(i) = |\maE(i)|,\]
 the {\em degree} of node $i$, that is, the number of neighbors of $i$ (possibly including $i$ itself if $i\in\maV_1$). 
Notice that, by defining $\maE$ as a set of (ordered) couples as we did above, for any $i,j$ such that $i\v j$, $(i,j)$ appears in $\maE$ together with $(j,i)$. In particular we get that 
\begin{equation}
\label{eq:nummudeg}
\sum_{i\in\maV}\mbox{deg}(i) = |\maE|.
\end{equation}

For any multigraph $G=(\maV,\maE)$ and any $U\subset \maV$, the {\em
	subgraph induced by} $U$ in $G$ is the multigraph $(U,\maE\cap (U\times U))$. 
%
An {\em independent set} of $G$ is a non-empty subset $\maI \subset \maV$ which does not include any pair of neighbors, {\em i.e.} : $\bigl(\forall (i,j) \in \maI^2, \ i \pv j\bigr)$. 
We let $\I(G)$ be the set of independent sets of $G$. 
Then, observe that $\forall \mathcal{I}\in \mathbb{I}(G), \; \mathcal{I}\cap\mathcal{V}_1=\emptyset$, i.e. $\mathcal{I}\subset\mathcal{V}_2$.
An independent set is said maximal, if it is not strictly included in another independent set.

Let us now recall and introduce the following definitions:

\begin{definition}
A graph $G=(\maV,\maE)$ is said to be {complete $q$-partite} (or blow-up graph of order $q$) if $\maV$ can be partitioned into $q$ maximal independent sets $\maI_1,\dots,\maI_q$, such that 
for any $k,\ell \in \llbracket 1,q \rrbracket$, $k\ne \ell$ and any $i_k \in \maI_k$ and $i_\ell \in \maI_\ell$, we have that $i_k \v i_\ell$. \end{definition}

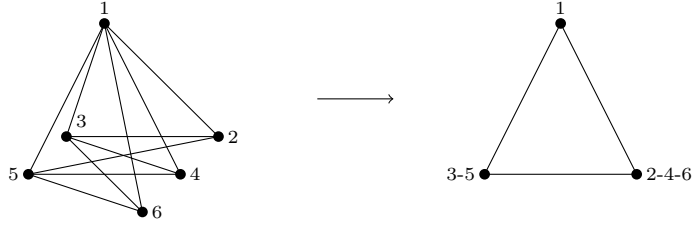
\begin{figure}[htb]
	\begin{center}
		\begin{tikzpicture}
		\fill (1,2) circle (2pt)node[above]{\scriptsize{1}};
		\fill (0,0) circle (2pt)node[left]{\scriptsize{5}};
		\fill (0.5,0.5) circle (2pt)node[above]{\scriptsize{$\;\;\;\;\;3$}};
		\fill (2,0) circle (2pt)node[right]{\scriptsize{4}};
		\fill (2.5,0.5) circle (2pt)node[right]{\scriptsize{2}};
		\fill (1.5,-0.5) circle (2pt)node[right]{\scriptsize{6}};
		\draw[-] (2.5,0.5) -- (1,2);
		\draw[-] (0.5,0.5) -- (1,2);
		\draw[-] (2,0) -- (1,2);
		\draw[-] (0,0) -- (1,2);
		\draw[-] (0,0) -- (2,0);
		\draw[-] (0,0) -- (2.5,0.5);
		\draw[-] (0.5,0.5) -- (2,0);
		\draw[-] (0.5,0.5) -- (2.5,0.5);
		\draw[-] (1.5,-0.5) -- (0.5,0.5);
		\draw[-] (1.5,-0.5) -- (0,0);
		\draw[-] (1.5,-0.5) -- (1,2);
		\draw[->] (3.8,1) -- (4.8,1);
			\fill (7,2) circle (2pt)node[above]{\scriptsize{1}};
		\fill (6,0) circle (2pt)node[left]{\scriptsize{3-5}};
		\fill (8,0) circle (2pt)node[right]{\scriptsize{2-4-6}};
			\draw[-] (7,2) -- (8,0);
		\draw[-] (7,2) -- (6,0);
		\draw[-] (6,0) -- (8,0);
		\end{tikzpicture}
		\caption{Complete $3$-partite subgraph (left), complete graph of order 3 (right).}
		\label{fig:Graph.q-partiteAndComplementaire}
	\end{center}
\end{figure}

In other words, a {complete $q$-partite} graph is a blow-up of the complete graph of {size} $q$, in the sense that all nodes are duplicated into one, or several distinct and unconnected nodes having the same neighbors as the original node. See an example on Figure \ref{fig:Graph.q-partiteAndComplementaire}. 

\begin{definition}
	\label{def:restricted}
	Let $G=(\maV,\maE)$ be a multigraph. The {\em maximal subgraph} of $G$ is the graph $\check G=(\maV,\check{\maE})$ 
	obtained by deleting all self-loops in $G$, that is 
	\begin{equation}
	\label{eq:defcheckE}
	\check{\maE}\isdef\maE\setminus\left\lbrace(i,i):i\in\maV_1\right\rbrace.
	\end{equation}
	See an example on Figure \ref{fig:GgraphGLmultigZAndGTilde}. 
\end{definition}

\begin{definition}
	\label{def:extended}
	Let $G=(\maV_1\cup\maV_2, \maE)$ be a multigraph. 
	The {\em minimal blow-up graph} of $G$ is the graph $\hat{G}=(\hat{\maV},\hat{\maE})$ defined as follows:
\begin{equation}
\label{eq:defGtilde}
\hat{\maV}\isdef\maV\cup\cop{\maV_1}\quad \textrm{and}\quad\hat{\maE}\isdef\check{\maE} \cup\underline{\maE_1},
\end{equation}
where $\cop{\maV_1}$ is {a} disjoint copy of $\maV_1$, $\check{\maE}$ is defined by (\ref{eq:defcheckE}) and $$\underline{\maE_1}\isdef\{(\underline{i},j):(i,j)\in\maE,\,i\in\maV_1,\,j\in\maV\}.$$
In other words, $ \hat G$ is obtained from $G$ by duplicating each node having a self-loop by two nodes having the same neighborhood and replacing each self-loop by an edge between 
the node and its copy. See an example {in} Figure \ref{fig:GgraphGLmultigZAndGTilde}. For any set $\maA \subset \maV_1$, we denote by $\cop{\maA}$ the set of all copies of elements of $\maA$, that is 
\[\cop{\maA} \isdef \left\{\underline i : i\in \maA\right\}.\]
For any element $j\in\cop{\mathcal A}$, we will also denote by $\cop{j}$ the unique element $i\in\A$ such that $j=\cop{i}$. 
The maximal subgraph $\check G$ of $G$ is then called {\em reduced graph} of $\hat G$. 
\end{definition}

\begin{figure}[htb]
	\begin{center}
		\begin{tikzpicture}
		\fill (0,2) circle (2pt)node[above]{\scriptsize{1}};
		\fill (0,1) circle (2pt)node[right]{\scriptsize{2}};
		\fill (-1,0) circle (2pt)node[left]{\scriptsize{3}};
		\fill (1,0) circle (2pt)node[right]{\scriptsize{4}};
		\draw[-] (0,1) -- (1,0);
		\draw[-] (0,1) -- (-1,0);
		\draw[-] (0,1) -- (0,2);
		\draw[-] (-1,0) -- (1,0);
		\draw[<-] (1.8,1) -- (2.2,1);
		\fill (4,2) circle (2pt)node[above]{\scriptsize{1}};
		\fill (4,1) circle (2pt)node[right]{\scriptsize{2}};
		\fill (3,0) circle (2pt)node[left]{\scriptsize{3}};
		\fill (5,0) circle (2pt)node[right]{\scriptsize{4}};
		\draw[-] (4,1) -- (5,0);
		\draw[-] (4,1) -- (3,0);
		\draw[-] (4,1) -- (4,2);
		\draw[-] (3,0) -- (5,0);
		\draw[->] (5.8,1) -- (6.2,1);
		\draw[-] (8,1) -- (9,0);
		\draw[-] (8,1) -- (7,0);
		\draw[-] (8,1) -- (8,2);
		\draw[-] (7,0) -- (9,0);
		\fill (8,2) circle (2pt)node[above]{\scriptsize{1}};
		\fill (8,1) circle (2pt)node[left]{\scriptsize{2}};
		\fill (7,0) circle (2pt)node[left]{\scriptsize{3}};
		\fill (9,0) circle (2pt)node[right]{\scriptsize{4}};
		\fill (10,1) circle (2pt)node[right]{\scriptsize{$\cop{2}$}};
		\draw[-] (10,1) -- (9,0);
		\draw[-] (10,1) -- (7,0);
		\draw[-] (10,1) -- (8,2);
		\draw[-] (10,1) -- (8,1);
		\draw[thick,-] (4,1) to [out=110,in=200,distance=10mm] (4,1);
		\end{tikzpicture}
		\caption{A multigraph $G$ (middle), its maximal subgraph $\check G$ (left) and its minimal blow-up graph $\hat{G}$ (right).}
		\label{fig:GgraphGLmultigZAndGTilde}
	\end{center}
\end{figure}
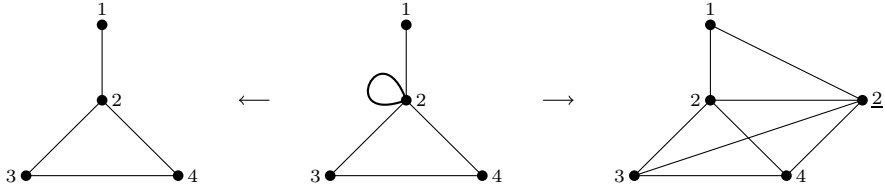

Throughout this paper, all considered multigraphs are connected, that is, for any $u,v \in \maV$, there exists a subset 
$\{v_0\isdef u, v_1, v_2,\dots, v_p\isdef v\}\subset \maV$ such that $v_i \v v_{i+1}$, for any $i \in\llbracket 0,p-1 \rrbracket$. 



\section{The model}
\label{sec:model}
All the random variables (r.v., for short) hereafter are defined on a common probability space $(\Omega,\mathcal{F},\mathbb{P}).$

\subsection{Stochastic matching model on a multigraph}
\label{sec:introd}
Generalizing the model introduced in \cite{MaiMoy16} to multigraphs (rather than only graphs), a 
{\em matching model}  is formally specified by the triple $(G,\Phi,\mu)$, where 
\begin{enumerate}
	\item $G=(\maV,\maE)$ is a connected multigraph {having at least two nodes}; 
	\item $\Phi$ is the {\em matching policy}, to be properly defined later;
	\item $\mu$ is an element of $\mathscr M(\maV)$, i.e. a probability measure having full support $\maV$. 
\end{enumerate}
\medskip
%
%

The dynamics of the matching model associated to $(G,\Phi,\mu)$  is then similar to 
that of stochastic matching models on general graphs, as in \cite{MaiMoy16}: 
start with an empty ``buffer'' and, for any $n\in\N_+$, draw an 
element $V_n$ of $\maV$ from the probability measure $\mu$, independently of 
$\sigma\left(\left\{V_1,V_2,\dots,V_{n-1}\right\}\right)$, and apply the following rules:
\begin{itemize}
\item if there is no element of class $i$ in the buffer such that $V_n \--i$, then add an item of class $V_n$ to the buffer;
\item otherwise, do not add $V_n$ and remove from the buffer an element of class $i$ such that $V_n\-- i$ 
(we say that $V_n$ and $i$ are {\em matched} together). If several elements $i$ of the buffer are such that $V_n\-- i$, the one to be
removed depends on a {\em matching policy} to be specified for the considered model. 
\end{itemize}
\subsection{State spaces}
\label{subsec:state}
We reproduce here the state description of the model introduced in \cite{MaiMoy16} for the stochastic model on general graphs, and then 
\cite{MBM18} for the same model under {\sc fcfm}. 
Fix a connected multigraph $G=(\maV,\maE)$, in the sense specified above, until the end of this section. 
Fix an integer $n_0 \ge 1$, a realization $v_1,\dots,v_{n_0}$ of $V_1,\dots,V_{n_0}$, and define the word $z\isdef v_1\dots v_{n_0} \in \maV^*$.  
Then, for any matching policy $\Phi$, there exists a unique {\em matching} of the word $z$, that is, a graph on the set of nodes 
$\left\{v_1,\dots,v_{n_0}\right\}$, whose edges represent the matches performed in the system until time $n_0$, if the successive arrivals are given by $z$.   
This matching is denoted by $M^\Phi(z)$. 
The state of the system is then defined as the word $W^\Phi(z)\in \maV^*$, whose letters are the classes of the unmatched items at time $n_0$, 
i.e. the isolated vertices in the matching $M^{\Phi}(z)$, in their order of arrivals. The word $W^\Phi(z)$ is called {\em queue detail} at time $n_0$. 
Then, any admissible queue detail belongs to the set 
\begin{equation}
\mathbb W \isdef \Bigl\{ w\in \maV^*\; : \; \forall  i\neq j \; \text{s.t.} \; (i,j) \in \maE, \; |w|_i|w|_j=0 \; \text{and} \; \forall i\in\mathcal{V}_1,\,|w|_i\leq 1 \Bigr\}.\label{eq:defmbW}
     \end{equation} 
As will be seen below, depending on the service discipline $\Phi$, we can also restrict the available information on the state of the system at time $n_0$, to a vector only keeping track of 
the number of items of the various classes remaining unmatched at $n_0$, that is, of the number of occurrences of the various letters of the alphabet $\maV$ in the word $W^\Phi(z)$.    
This restricted state thus equals the commutative image of $W^{\Phi}(z)$ and is called {\em class detail} of the system. It takes values in the set 
\begin{align}
\mathbb X &\isdef \Bigl\{x \in \N^{|\maV|}\,:\,\forall i\neq j \mbox{ s.t. }(i,j)\in\maE,\; x(i)x(j)=0\mbox{ and }\forall i\in\maV_1,\, x(i)\leq 1\Bigl\}\nonumber\\
&=\Bigl\{\left[w\right]\;:\,w \in \mathbb W\Bigl\}.\nonumber
\end{align} 

\subsection{Matching policies}
\label{subsec:pol}
We now present and  formally define the set of usual matching policies that can be considered, 
\begin{definition}
A matching policy $\Phi$ is said {\em admissible} if the choice of the match of an incoming item depends 
{solely} on the queue detail upon the arrival (and possibly on an independent uniform draw, in case of a tie). 
\end{definition} 
An admissible matching policy can be formally characterized by an action $\odot_{\Phi}$ of $\maV$ on $\mathbb W$, defined as follows: 
if $w$ is the queue detail at a given time and the input is augmented by the arrival of $v \in \maV$ at that time, then the new queue detail $w'$ and $w$ {satisfy} the relation
\begin{equation}
\label{eq:defodot}
w'= w\odot_{\Phi} v.
\end{equation}
Notice that the action $\odot_\Phi$ is possibly random. See below. 

\subsubsection{Matching policies that depend on the arrival times}  
In 'First Come, First Matched' ({\sc fcfm}), the oldest item in line is chosen, so the map  $\odot_{\textsc{fcfm}}$ is 
given, for all $w \in \mathbb W$ and all $v\in\mathcal{V}$, by 
$$
w \odot_{\textsc{fcfm}} v \isdef
\left \{
\begin{array}{ll}
wv & \textrm{if } \; |w|_{\maE(v)} = 0,\\
w_{\left [\Phi(w,v)\right]} & \textrm{else, where }\Phi(w,v) = \min \{k\in[\![1;|w|]\!] : w_k\in\mathcal{E}(v)\}.
\end{array}
\right.
$$
In `Last Come, First Matched' ({\sc lcfm}), the updating map $\odot_{\textsc{lcfm}}$ is analog to $\odot_{\textsc{fcfm}}$, for 
$\Phi(w,v) = \max \{k\in[\![1;|w|]\!] : w_k\in\mathcal{E}(v)\}.$ 

\subsubsection{Class-admissible matching policies}
A matching policy $\Phi$ is said to be {\em class-admissible} if it can be implemented upon the sole knowledge of 
the class-detail of the system. Let us define, for any $x\in \mathbb X$ and any $v \in \maV$,
\begin{equation*}
\maP(x,v) \isdef\Bigl\{j\in \maE(v):x\left(j\right) > 0\Bigl\},\label{eq:setP2}
\end{equation*}
the set of classes of available compatible items with the entering class $v$-item, if the class-detail of the system is given by $x$. 
Then, a class-admissible policy $\Phi$ is fully characterized by a (possibly random) mapping $p_\Phi$, such that $p_\Phi(x,v)$ denotes the class of the match chosen 
by the entering $v$-item under $\Phi$, in a system of class-detail $x$, such that $\mathcal P(x,v)$ is non-empty. 
Then, the arrival of $v$ entails the following action on
the class-detail:
\begin{equation}
\label{eq:defccc}
x \ccc_{\Phi} v \isdef \left \{
\begin{array}{ll}
x+\gre_v &\mbox{ if }\mathcal P(x,v)=\emptyset,\\
x-\gre_{p_\Phi(x,v)}&\mbox{ else}. 
\end{array}
\right .
\end{equation}
\begin{remark}
\label{rem:equiv}
\rm
As is easily seen, to any class-admissible policy $\Phi$ corresponds an admissible policy, if one makes precise the rule of choice of match for the 
incoming items {\em within} the class that is chosen by $\Phi$, in the case where more than one item of that class is present in the system. 
In this paper, we always {assume} that within classes, the item chosen is always the {\em oldest} in line, i.e. we always apply the 
\textsc{fcfm} policy {\em within classes}.  
Under this convention, any class-admissible policy $\Phi$ is admissible, that is, the mapping $\ccc_\Phi$ from 
$\mathbb X\times \maV$ to $\mathbb X$ can be detailed into a map $\odot_{\Phi}$ from 
$\mathbb W \times \maV$ to $\mathbb W$, as in (\ref{eq:defodot}), such that for any queue detail $w$ and any $v$,
\[\left[w\odot_\Phi v\right] = \left[w\right]\ccc_\Phi v.\]    
\end{remark}

\paragraph{{\em Class-admissible random policies.}} 
In a random policy, the only information that is needed to determine the choice of the match of an incoming item is whether its various compatible classes have an empty queue or not.  
Specifically, the order of preference of each incoming item is drawn upon the arrival following a given probability distribution; then, the considered item investigates its compatible classes in that order, 
until it finds one having a non-empty buffer, if any. The incoming item is then matched with an item of the latter class. In other words, upon each arrival of an item of class $v$, a permutation 
$\sigma=\left(\sigma(1),\dots,\sigma\left(|\maE(v)|\right)\right)$ is drawn from a given probability distribution on $\mathfrak S_{\maE(v)}$, and we set   
\begin{equation}
p_{\Phi}(x,v)\isdef\sigma(k),\mbox{ where }k=\min \Bigl\{i \in
\maE(v):\sigma(i)\in \maP(x,v)\Bigl\}\label{eq:pphirandom}.
\end{equation}
In particular, the uniform policy {\sc u} corresponds to the uniform distribution on $\mathfrak{S}_{\maE(v)}$.

\medskip

\paragraph{{\em Priority policies}} 
In a priority policy, for any $v\in \maV$, the order of preference of $v$ in $\maE(v)$ is fixed beforehand. In other words, upon the arrival of an item of 
class $v$, the permutation $\sigma$ of $\mathfrak S_{\maE(v)}$ is deterministic and corresponds to the order of preference of class $v$-item in $\maE(v)$. 
This is thus a particular case of random policy. 

\medskip

\paragraph{{\em Max-Weight policies}}
The Max-Weight (\textsc{mw}) policies are an important class of class-admissible policies, in which matches are based upon the queue length and 
a fixed reward that is associated to each match. 
Formally, for any {$(i,j)\in \maE$}, we let $w_{i,j}$ be the reward associated to the match of an $i$-item with a $j$-item, and fix a real parameter $\beta$. 

Then, in a system of class-detail $x$, the match of the incoming $v$-item is given by 
\begin{equation*}
p_{\Phi}(x,v)\isdef\mbox{argmax}\left\{\beta x(j) + w_{v,j}\,:\,j\in \maP(x,v)\right\},
\end{equation*}
where ties are broken uniformly at random whenever the above is non-unique. In other words, $j$ maximizes a linear combination of the queue-size 
and the rewards. Several particular cases are to be mentioned:
	\begin{enumerate}
	\item[(i)] If $\beta>0$ and the rewards are constant (i.e. $w_{i,j} = w_{i',j'}$, for any $(i,j)$ and $(i',j')\in\maE$), then the matching policy is 
	`Match the Longest' ({\sc ml}), i.e., the incoming $v$-item is matched upon the arrival with an item of the compatible class having the longest queue size (ties being broken uniformly at random).
	\item[(ii)] If $\beta<0$ and the rewards are constant, then the matching policy is 
	`Match the Shortest' ({\sc ms}), i.e., the incoming $v$-item is matched upon the arrival with an item of the compatible class having the shortest queue size (ties being broken uniformly at random).
	\item[(iii)] If $\beta=0$ and $w_{i,j}\ne w_{i,j'}$ for any $i\in \maV$ and any $j\ne j' \in \maE(i)$ (implying that there is a strict ordering of rewards for all 
	possible matches of any given class), then the matching policy is of a priority type, defined above. 
	\end{enumerate}



\subsubsection{$\maV_2$-favorable policies}
An important class of matching policies is given by the following:

\begin{definition}
We say that an admissible matching policy $\Phi$ on $G$ is $\maV_2$-favorable if any incoming item always prioritizes a match with a compatible 
item of class in $\maV_2$ over a compatible item of class in $\maV_1$, whenever it has the choice. Formally, if the class-detail is given by $x\in\mathbb X$ and the arrival is of class $v$, it never occurs that the incoming $v$-item is matched with a $j$-item, for some $j\in\maV_1$, while $\maP(x,v) \cap \maV_2 \ne \emptyset$. 
\end{definition}

\subsection{Primary Markov representation}
\label{subsec:Markov}
The Markov representation of the model is similar {to} that of general matching models on graphs. 
Denote, for all $w\in\mathbb W$ and all $n\ge 1$, by $W^{\{w\}}_n$, the buffer-content at time $n$ 
(i.e. just after the arrival of the item $V_n$) if the buffer-content at time 0 was set to $w$. In other words,
\[\left\{\begin{array}{ll}
W^{\{w\}}_0 &=0,\\
W^{\{w\}}_n &= W^\Phi\left(wV_1\dots V_{n}\right),\quad n\in \N_+.\end{array}\right.\]
It readily follows from (\ref{eq:defodot}) and Remark \ref{rem:equiv} that the buffer-content sequence $\suite{W^{\{w\}}_n}$ is a Markov chain.
Indeed, for any $w\in\mathbb W$, we have 
\[W^{\{w\}}_{n+1} =W^{\{w\}}_n \odot_\Phi V_{n+1},\,\forall n\in\N.\]

For a fixed initial condition and for all $n\in\N$, we denote, for all $B \subset \maV$, by $W_n(B)$, the number of items in 
line of classes in $B$ at time $n$, and by $|W_n|$, the total number of items in the system at time $n$. In other words, 
\[\left\{\begin{array}{ll}
 W_n(B) &\isdef \displaystyle\sum_{i\in B} W_n(i),\\
 |W_n|  &\isdef W_n(\maV) = \displaystyle\sum_{i\in\maV} W_n(i). 
\end{array}\right.\]


In line with \cite{MaiMoy16}, for any admissible matching policy $\Phi$, we define the {\em stability region} associated to $G$ and $\Phi$ as the set of measures 
\begin{equation}
\label{eq:defstab}
\textsc{stab}(G,\Phi) \isdef \left\{\mu \in \mathscr M\left(\maV\right):\suite{W_n} \mbox{ is positive recurrent}\right\}.
\end{equation}
\begin{definition}
We say that a multigraph $G$ is stabilizable 
if $\textsc{stab}(G,\Phi)\neq\emptyset$, for some matching policy $\Phi$. {If not, $G$ is said non-stabilizable.}
\end{definition}

\begin{remark}\label{rmk:stab}\rm
	If $G$ is such that 
	$\maV = \maV_1$, i.e. all nodes of the multigraph have a self-loop, we say, for obvious reasons, that the considered matching models 
	are {\em finite}. Then any matching model on $G$ is necessarily stable, that is, for any admissible $\Phi$ we have that 
	\[\textsc{stab}(G,\Phi)=\mathscr M(\maV).\]
	Indeed, the Markov chain $\suite{W_n}$ is 
	irreducible on the finite state space $\mathbb W$, containing only words having size less or equal to the cardinality of the largest independent set of $G$. 
\end{remark}

\section{Main results}
\label{sec:results}
We now state the main results of this paper. 
Similarly to \cite{MaiMoy16}, we will be led to consider the set   
\begin{equation}
\label{eq:Ncond}
\textsc{Ncond}(G)\isdef \left\{\mu \in \maM\left(\maV\right):\forall\maI \in \I(G),\; \mu\left(\maI\right) < \mu\left(\maE\left(\maI\right)\right)\right\}. 
\end{equation}
Let us immediately observe that
\begin{lemma}
\label{lemma:Ncond}
For any connected multigraph $G$, we have that 
\[\textsc{Ncond}\left(\check{G}\right) \subseteq\textsc{Ncond}(G).\]
\end{lemma}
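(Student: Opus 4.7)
The plan is to observe that every independent set of $G$ is automatically an independent set of $\check G$, with the same neighborhood, so the defining inequalities of $\textsc{Ncond}(\check G)$ imply those of $\textsc{Ncond}(G)$.

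First I would note that, by the definition of independent sets recalled in the paper, any $\maI\in\I(G)$ satisfies $\maI\cap\maV_1=\emptyset$, since a self-looped vertex $v\in\maV_1$ has $v\v v$ and hence cannot lie in an independent set. Thus $\maI\subset\maV_2$. Now $\check G$ and $G$ have the same vertex set and differ only by the removal of the self-loops $\{(i,i):i\in\maV_1\}$. Since no pair of vertices of $\maI$ is a self-loop pair (indeed no vertex of $\maI$ is self-looped), the pairwise non-adjacency of $\maI$ in $G$ transfers verbatim to $\check G$. Hence $\I(G)\subseteq\I(\check G)$.

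Next I would show that for any such $\maI\in\I(G)$, the neighborhoods coincide: $\maE(\maI)=\check{\maE}(\maI)$. This is because the only edges of $G$ not present in $\check G$ are the self-loops at vertices of $\maV_1$, and since $\maI\subset\maV_2$, none of these edges is incident to $\maI$. Equivalently, for every $i\in\maI$, we have $\maE(i)=\check{\maE}(i)$.

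To finish, take $\mu\in\textsc{Ncond}(\check G)$. For any $\maI\in\I(G)$, the inclusion $\I(G)\subseteq\I(\check G)$ allows us to apply the defining inequality for $\check G$ to $\maI$, giving $\mu(\maI)<\mu(\check{\maE}(\maI))$, and the neighborhood equality then rewrites this as $\mu(\maI)<\mu(\maE(\maI))$. Thus $\mu\in\textsc{Ncond}(G)$, proving the claimed inclusion. There is no real obstacle here; the only point requiring care is the verification that self-loops play no role in computing $\maE(\maI)$ for independent sets, which is immediate once one observes $\maI\subset\maV_2$.
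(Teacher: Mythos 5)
Your proof is correct and follows essentially the same route as the paper's: for $\mu\in\textsc{Ncond}(\check G)$ and $\maI\in\I(G)$, observe that $\maI$ is also independent in $\check G$, apply the defining inequality there, and use $\maI\subset\maV_2$ to conclude $\check{\maE}(\maI)=\maE(\maI)$. You merely spell out the two small verifications that the paper labels as ``plainly,'' so there is nothing to change.
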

\begin{proof}
Let $\mu\in \textsc{Ncond}\left(\check{G}\right)$ and $\maI \in \I(G)$. Plainly, $\maI$ is then also an element of $\maI(\check G)$, so we have that 
$\mu(\maI) < \mu\left(\check{\maE}(\maI)\right)$. But as $\maI\subset\maV_2$ we also have that $\check{\maE}(\maI)=\maE(\maI)$, which concludes the proof. 
\end{proof}
It is stated in Theorem 1 of \cite{MaiMoy16} that, if $G$ is a graph, the set {\sc Ncond}$(G)$ is non-empty if and only if $G$ is not a bipartite graph. 
This result can be generalized to multigraphs, and completed as follows, 
\begin{proposition}
	\label{prop:ncond} 
	For any connected multigraph $G$, we have that 
	\[\textsc{Ncond}(G) \ne \emptyset \Longleftrightarrow \mbox{ $G$ is not a bipartite graph.}\]
	\noindent In that case, recalling (\ref{eq:nummudeg}), the probability measure $\mu_{\tiny{\mbox{deg}}}$ defined by
	\begin{equation}\label{eq:defmudeg}
	\mu_{\tiny{\mbox{deg}}}(i) = {\mbox{deg}(i) \over |\maE|},\quad i\in \maV,\end{equation}
	is always an element of {\sc Ncond}$(G)$.
\end{proposition}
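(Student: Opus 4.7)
The plan splits by direction. For the forward direction I would argue by contrapositive: suppose $G$ is a bipartite graph with bipartition $\maV = \maI_1 \cup \maI_2$. Since $G$ is connected and each part is an independent set, $\maE(\maI_1) = \maI_2$ and $\maE(\maI_2) = \maI_1$. Any $\mu \in \textsc{Ncond}(G)$ would then have to satisfy both $\mu(\maI_1) < \mu(\maI_2)$ and $\mu(\maI_2) < \mu(\maI_1)$, a contradiction, so $\textsc{Ncond}(G) = \emptyset$.

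For the reverse direction, I would exhibit an explicit witness, namely $\mu_{\tiny{\mbox{deg}}}$, and show that whenever $G$ is not bipartite, $\mu_{\tiny{\mbox{deg}}} \in \textsc{Ncond}(G)$. Since $G$ is connected and has at least two nodes, all degrees are positive, hence $\mu_{\tiny{\mbox{deg}}}\in\maM(\maV)$ by \eqref{eq:nummudeg}. Fix $\maI \in \I(G)$. Because $\maI \cap \maV_1 = \emptyset$ and $\maI$ is independent, every neighbor of every $i \in \maI$ lies in $\maE(\maI)$; counting edges as ordered pairs this gives
\[
\sum_{i\in\maI}\deg(i) = \card\{(i,j)\in\maE : i\in \maI\} = \card\{(i,j)\in\maE : i\in\maI,\,j\in\maE(\maI)\}.
\]
By symmetry of $\maE$, the right-hand side also equals $\card\{(j,i)\in\maE : j \in \maE(\maI),\,i\in\maI\}$, which is plainly bounded above by $\sum_{j\in\maE(\maI)}\deg(j)$. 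Dividing by $|\maE|$ yields the (weak) inequality $\mu_{\tiny{\mbox{deg}}}(\maI) \le \mu_{\tiny{\mbox{deg}}}(\maE(\maI))$.

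The main step is upgrading this to a strict inequality when $G$ is not bipartite. Equality would force every edge incident to a vertex of $\maE(\maI)$ to have its other endpoint in $\maI$. In particular, no $j\in\maE(\maI)$ carries a self-loop (otherwise $j$ would be its own neighbor, but $j\notin\maI$), so $\maE(\maI)\subset\maV_2$; moreover no two vertices of $\maE(\maI)$ are adjacent, so $\maE(\maI)$ is itself an independent set. Thus $\maI \cup \maE(\maI)$ is closed under taking neighbors, and by connectedness of $G$ we conclude $\maV = \maI \sqcup \maE(\maI)$ with both parts independent and $\maV_1=\emptyset$, meaning $G$ is a bipartite graph --- contradicting the hypothesis. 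Hence the inequality is strict for every $\maI\in\I(G)$, which shows $\mu_{\tiny{\mbox{deg}}}\in\textsc{Ncond}(G)$ and completes the proof.

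The key obstacle is this equality analysis: one must be careful to use both that $\maI$ contains no self-looped vertex (so all its incident edges genuinely cross to $\maE(\maI)$) and that saturation of the degree sum rules out self-loops in $\maE(\maI)$ as well, in order to extract a true bipartition of $G$ from a saturated independent set.
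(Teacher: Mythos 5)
Your proof is correct and takes essentially the same route as the paper: the bipartite direction is the same two-sided contradiction with the bipartition $(A,B)$, and your ordered-pair degree count with the equality analysis is precisely the paper's argument via the incident-edge sets (showing $\out(\maI)=\out(\maE(\maI))$ and then extracting the bipartition $(\maI,\maE(\maI))$ from saturation, absence of self-loops in $\maE(\maI)$, and connectedness), only phrased in slightly different notation.
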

\noindent Proposition \ref{prop:ncond} is proven in Section \ref{sec:ncond}. 
From Proposition 2 in \cite{MaiMoy16}, whenever $G$ is a graph (i.e., $\maV_1=\emptyset$), 
the set $\textsc{stab}(G,\Phi)$ is included in $\textsc{Ncond}(G)$ for any admissible policy $\Phi$. In other words, for any measure $\mu$, 
belonging to $\textsc{Ncond}(G)$ is necessary for the stability of the system $(G,\Phi,\mu)$, for any $\Phi$. 
A similar result holds for any multigraph $G$:

\begin{proposition}
	\label{thm:mainmono} 
	For any connected multigraph $G=(\maV,\maE)$ and any admissible matching policy
	$\Phi$, we have that 
	\[\textsc{stab}(G,\Phi) \subset \textsc{Ncond}(G).\]
\end{proposition}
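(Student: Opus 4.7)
The plan is to establish the strict inequality $\mu(\mathcal{I}) < \mu(\mathcal{E}(\mathcal{I}))$ for every $\mathcal{I} \in \mathbb{I}(G)$ whenever $\mu \in \textsc{stab}(G,\Phi)$, by a conservation-of-flow argument at stationarity that adapts \cite[Proposition~2]{MaiMoy16} to multigraphs. Because $\mathcal{I}$ is independent, $\mathcal{I} \subset \mathcal{V}_2$ and $\mathcal{I} \cap \mathcal{E}(\mathcal{I}) = \emptyset$; hence under any admissible $\Phi$, an item of class in $\mathcal{I}$ can only leave the buffer by being matched with an arriving item of class in $\mathcal{E}(\mathcal{I})$, and each such match consumes exactly one arrival of class in $\mathcal{E}(\mathcal{I})$.

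The first step is a sample-path bookkeeping. Set $A_n \isdef \sum_{k=1}^n \mathbf{1}_{\{V_k \in \mathcal{I}\}}$, $B_n \isdef \sum_{k=1}^n \mathbf{1}_{\{V_k \in \mathcal{E}(\mathcal{I})\}}$, and let $M_n$ denote the number of matches involving an $\mathcal{I}$-item up to time $n$. Then $W_n(\mathcal{I}) = W_0(\mathcal{I}) + A_n - M_n$ and $M_n \leq B_n$. More sharply, any arrival of class in $\mathcal{E}(\mathcal{I})$ that occurs when the buffer is empty performs no match, so, setting $E_n \isdef \sum_{k=1}^n \mathbf{1}_{\{W_{k-1}=\varepsilon,\,V_k \in \mathcal{E}(\mathcal{I})\}}$, one obtains $M_n \leq B_n - E_n$.

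I would then pass to the limit $n\to\infty$ using ergodic theorems for the positive recurrent chain $\{W_n\}$. Since $\mu$ has full support, the empty state $\varepsilon$ is accessible from every state in the relevant communication class, so positive recurrence yields a stationary distribution $\pi$ with $\pi(\{\varepsilon\}) > 0$. In particular $\{W_n = \varepsilon\}$ occurs infinitely often, and along the subsequence $(n_k)$ of return times to $\varepsilon$ one has $W_{n_k}(\mathcal{I}) = 0$, i.e.\ $M_{n_k} = W_0(\mathcal{I}) + A_{n_k}$. The strong law of large numbers applied to $(V_k)_{k \geq 1}$ gives $A_n/n \to \mu(\mathcal{I})$ and $B_n/n \to \mu(\mathcal{E}(\mathcal{I}))$ a.s., whereas the ergodic theorem applied to the positive recurrent chain $(W_{n-1},V_n)_{n \ge 1}$ and to the bounded function $(w,v) \mapsto \mathbf{1}_{\{w=\varepsilon,\,v \in \mathcal{E}(\mathcal{I})\}}$ yields $E_n/n \to \pi(\{\varepsilon\})\mu(\mathcal{E}(\mathcal{I}))$ a.s.

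Dividing $M_{n_k} \leq B_{n_k} - E_{n_k}$ by $n_k$ and letting $k \to \infty$ gives $\mu(\mathcal{I}) \leq (1-\pi(\{\varepsilon\}))\mu(\mathcal{E}(\mathcal{I})) < \mu(\mathcal{E}(\mathcal{I}))$, the final strict inequality using $\mu(\mathcal{E}(\mathcal{I})) > 0$, itself a consequence of the connectedness of $G$ together with the full support of $\mu$ (so that $\mathcal{E}(\mathcal{I}) \neq \emptyset$). The main obstacle is justifying $\pi(\{\varepsilon\}) > 0$, i.e.\ the accessibility of $\varepsilon$ from an arbitrary $w \in \mathbb{W}$: in the multigraph setting any item of class $i\in\mathcal{V}_1$ present in $w$ can be cleared by a subsequent $i$-arrival using the self-loop at $i$, whereas items of class in $\mathcal{V}_2$ are cleared by arrivals along edges of $\check G$ exactly as in the graph case of \cite{MaiMoy16}.
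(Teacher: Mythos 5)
Your bookkeeping breaks at the key step $M_n \leq B_n - E_n$, and that step is exactly where all the strictness of the final inequality comes from. An arrival of class $j\in\mathcal{E}(\mathcal{I})$ that finds an empty buffer performs no match \emph{at that instant}, but it is stored, and a later incoming $\mathcal{I}$-item may be matched with it; that match is counted in $M_n$ while the $j$-arrival is counted in $E_n$, so such an arrival is not ``lost'' for $\mathcal{I}$-matchings. Concretely, on the single edge $1\v 2$ with $\mathcal{I}=\{1\}$ and the arrival sequence $2,1$: the $2$-item arrives to an empty buffer ($E_2=1$, $B_2=1$) and is then matched by the incoming $1$-item, so $M_2=1>B_2-E_2=0$. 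Without the $E_n$ term your argument only yields $M_{n_k}\leq B_{n_k}$ along the return times to $\varepsilon$, hence $\mu(\mathcal{I})\leq\mu(\mathcal{E}(\mathcal{I}))$, which is weaker than membership in $\textsc{Ncond}(G)$; the borderline case $\mu(\mathcal{I})=\mu(\mathcal{E}(\mathcal{I}))$ is precisely the delicate one and your proof does not exclude it. (Whether the inequality $\mu(\mathcal{I})\leq(1-\pi(\varepsilon))\mu(\mathcal{E}(\mathcal{I}))$ is true is beside the point: the sample-path claim used to derive it is false.)

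The paper handles this by invoking the argument of Proposition 2 of \cite{MaiMoy16}, whose mechanism for the equality case is different: one bounds $W_n(\mathcal{I})$ from below by $W_0(\mathcal{I})+A_n-B_n$, a random walk whose increments have mean $\mu(\mathcal{I})-\mu(\mathcal{E}(\mathcal{I}))$. If $\mu(\mathcal{I})>\mu(\mathcal{E}(\mathcal{I}))$, the walk drifts to $+\infty$ and positive recurrence fails by the strong law of large numbers (this part of your proof is fine in spirit). If $\mu(\mathcal{I})=\mu(\mathcal{E}(\mathcal{I}))$, the walk has zero drift, and the return time of $\suite{W_n}$ to $\varepsilon$ dominates the first passage of this zero-drift walk to the nonpositive half-line, which has infinite expectation; hence the expected return time to $\varepsilon$ is infinite, contradicting positive recurrence. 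Your remaining ingredients (accessibility of $\varepsilon$, hence $\pi(\varepsilon)>0$, the SLLN for $A_n$, $B_n$, and the use of $M_n\leq B_n$) are sound, but to complete the proof you need to replace the flawed $E_n$ refinement by an argument of this type for the critical case.
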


\begin{proof}
	The proof is analog to that of Proposition 2 in \cite{MaiMoy16}. 
\end{proof}

\noindent Hence, the notion of maximality of a matching policy:

\begin{definition}
For any connected multigraph $G$ that is not a bipartite graph, a matching policy $\Phi$ is said maximal if the sets 
$\textsc{stab}(G,\Phi)$ and  $\textsc{Ncond}(G)$ coincide.
\end{definition}

Whenever $G$ is a graph, Theorem 1 of \cite{MBM17} shows, first, that the policy `First Come, First Matched' ({\sc fcfm}) is maximal, and second, that  the stationary probability of the chain $\suite{W_n}$ can be expressed in a remarkable product form. 
We generalize this result to multigraphs: 
\begin{theorem}
\label{thm:FCFM}
The matching policy `First Come, First Matched' is maximal: for any connected 
multigraph $G$ that is not a bipartite graph, we have that {\sc Stab}$(G,\textsc{fcfm})=\textsc{Ncond}(G)$.  
Moreover, for any $\mu\in\textsc{Ncond}(G)$ the unique stationary probability $\Pi_W$ of the chain $\left(W_n\right)_{n\in\mathbb{N}}$ is defined by 
\begin{equation*}
\left\{\begin{array}{ll}
\Pi_W(\varepsilon) &=\alpha;\\
\Pi_W(w)&=\alpha\displaystyle\prod\limits_{l=1}^q \frac{\mu(w_l)}{\mu(\mathcal{E}(\{w_1,\dots,w_l \}))},
\mbox{ for all }w=w_1\dots w_q\in\mathbb{W}\setminus\{\varepsilon\}, 
\end{array}\right.\end{equation*}
where
\begin{equation}
\label{eq:defalpha}
\alpha^{-1}\!\!=\!1+\!\sum\limits_{\mathcal{I}\in\mathbb{I}\left(\check G\right)}\sum\limits_{\sigma\in\mathfrak{S}_{|\mathcal{I}|}}\prod\limits_{i=1}^{|\mathcal{I}|} \frac{\mu\left(e_{\sigma(i)}\right)}{\mu(\mathcal{E}(\{e_{\sigma(1)},\dots,e_{\sigma(i)}\}))-\mu(\{e_{\sigma(1)},\dots,e_{\sigma(i)}\}\cap\maV_2)},
\end{equation}
and where we denote  $\maI=\{e_1,\dots,e_{|\mathcal{I}|}\}$ for any $\mathcal{I}\in\mathbb{I}\left(\check G\right)$. 
\end{theorem}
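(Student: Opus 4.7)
The necessity direction $\textsc{stab}(G,\textsc{fcfm})\subset\textsc{Ncond}(G)$ is already granted by Proposition \ref{thm:mainmono}. For the reverse inclusion and the identification of the stationary law, the plan is to adapt the dynamic-reversibility approach of \cite{MBM17} to the multigraph setting: I will define the candidate measure $\Pi_W$ on $\mathbb W$ by the product form given in the statement, verify it is a well-defined probability measure, and then check that it satisfies the global balance equation for the FCFM chain $(W_n)$. Since $(W_n)$ is clearly irreducible on $\mathbb W$, this will yield positive recurrence together with uniqueness of $\Pi_W$.

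To show that $\alpha$ is a positive real number, I would first argue that every denominator in $\alpha^{-1}$ is strictly positive. The denominators are of the form $\mu(\maE(\maA))-\mu(\maA\cap\maV_2)$ for some $\maA\in\mathbb I(\check G)$. Here $\maA\cap\maV_2$ is itself an independent set of $G$: its elements carry no self-loop, and any edge within $\maA\cap\maV_2$ would also be an edge in $\check G$, contradicting independence of $\maA$ in $\check G$. The assumption $\mu\in\textsc{Ncond}(G)$ then gives $\mu(\maA\cap\maV_2)<\mu(\maE(\maA\cap\maV_2))\le\mu(\maE(\maA))$, by monotonicity of $\maE$. A direct calculation — grouping the sum $\sum_{w\in\mathbb W} \prod_l \frac{\mu(w_l)}{\mu(\maE(\{w_1,\dots,w_l\}))}$ by the set of distinct letters $\maA$ appearing in $w$ and their order of first appearance, then summing the resulting geometric series over the multiplicities of each letter (free for letters in $\maA\cap\maV_2$, constrained to $1$ for letters in $\maA\cap\maV_1$ by the definition of $\mathbb W$) — then produces the closed form \eqref{eq:defalpha}.

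The heart of the proof is the global balance equation $\Pi_W(w')=\sum_{w\in\mathbb W}\Pi_W(w)P(w,w')$ for every $w'\in\mathbb W$. A state $w'$ admits two families of predecessors under FCFM: (i) shorter ones $w=w''$ with $w'=w''v$ for some $v\in\maV$ such that $|w''|_{\maE(v)}=0$, obtained by appending $v$ without triggering a match; (ii) longer ones $w=w_1\dots w_k\dots w_{q+1}$ with $w'=w_{[k]}$, obtained when an incoming $v\in\maE(w_k)$ matches $w_k$ in FCFM order, i.e.\ $w_1,\dots,w_{k-1}\notin\maE(v)$. Exploiting the multiplicative structure of $\Pi_W$, the ratios $\Pi_W(w)/\Pi_W(w')$ in cases (i) and (ii) reduce to explicit products of factors $\mu(\cdot)$ and $1/\mu(\maE(\cdot))$, so the balance identity boils down to a partition-of-unity relation, asserting that $\mu(\maV)=1$ decomposes coherently into contributions from arrivals that extend $w'$ and arrivals that would have matched one of its letters. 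I expect this identity to be provable by induction on $|w'|$, with the base case $w'=\varepsilon$ pinning down the normalization.

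The principal obstacle I foresee lies in handling the self-loops, which create a transition class absent from the pure graph case of \cite{MBM17}: when $i\in\maV_1$ and the queue already contains the letter $i$, the arrival of a new $i$-item removes that letter rather than enqueues it. This has two consequences. First, the state $w'=wv$ is not a legitimate FCFM successor of $w$ whenever $v\in\maV_1$ and $|w|_v=1$, so some naive preimages in family~(i) must be excluded. Second, the balance equation at $w'$ receives an extra contribution from self-matches of letters in $\maV_1$, whose cancellation with other terms is precisely what accounts for the subtraction $-\mu(\{e_{\sigma(1)},\dots,e_{\sigma(i)}\}\cap\maV_2)$ appearing in the denominators of \eqref{eq:defalpha}. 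I plan to organize the verification by splitting the balance computation according to whether $v\in\maV_1$ or $v\in\maV_2$ and using the elementary identity $\maA\cap\maV_1\subset\maE(\maA)$, which will absorb the self-loop contributions and yield the stated product form. An alternative route would be to lift the dynamics to the minimal blow-up graph $\hat G$ of Definition \ref{def:extended} and apply \cite{MBM17} there; however, this requires a nontrivial coupling between the FCFM chains on $G$ and $\hat G$ that I expect to be at least as delicate as the direct verification.
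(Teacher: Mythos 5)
Your plan is correct, but it takes a genuinely different route from the paper. The paper never verifies global balance for $(W_n)$ directly: it introduces the backward and forward detailed chains $(B_n)$ and $(F_n)$ on words over $\maV\cup\overline{\maV}$, proves the local balance identity $\nu(\w)\P_B(\w,\w')=\nu\left(\overleftarrow{\overline{\w'}}\right)\P_F\left(\overleftarrow{\overline{\w'}},\overleftarrow{\overline{\w}}\right)$ for a product measure $\nu$, deduces that $\nu$ restricted to the admissible backward states is stationary with finite mass precisely under $\textsc{Ncond}(G)$ (this is where the denominators $\mu(\maE(\cdot))-\mu(\cdot\cap\maV_2)$ arise, via geometric series over blocks of repeatable letters), and then obtains $\Pi_W$ by lumping $(B_n)$ onto $(W_n)$ through the restriction-to-$\maV$ projection. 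Your direct guess-and-verify on $\mathbb W$ does go through, and in fact more simply than you anticipate: for $w'=w'_1\dots w'_q$, the prefix predecessor contributes $\mu(\maE(\{w'_1,\dots,w'_q\}))$ after dividing by $\Pi_W(w')$, while inserting a letter $c\notin\maE(\{w'_1,\dots,w'_q\})$ at position $k$ and matching it by an arrival in $\maE(c)\setminus\maE(\{w'_1,\dots,w'_{k-1}\})$ contributes $\mu(c)(T_k-T_{k-1})$, where $T_k=\prod_{l=k}^{q}a_l/b_l$ with $a_l=\mu(\maE(\{w'_1,\dots,w'_l\}))$ and $b_l=\mu(\maE(\{w'_1,\dots,w'_l\}\cup\{c\}))$; the sum over $k$ telescopes to $\mu(c)$ since $a_0=\mu(\maE(\emptyset))=0$, and summing over insertable $c$ (exactly the complement of $\maE(\{w'_1,\dots,w'_q\})$, self-loops included) yields $1-\mu(\maE(\{w'_1,\dots,w'_q\}))$ — no induction on $|w'|$ is needed. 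Two caveats: the subtraction $-\mu(\{e_{\sigma(1)},\dots,e_{\sigma(i)}\}\cap\maV_2)$ in (\ref{eq:defalpha}) comes solely from the normalization sum (the geometric series over repetitions of $\maV_2$-letters, as in your second paragraph), not from any cancellation of self-loop terms in the balance equations, which only involve the unnormalized weights and are insensitive to $\alpha$; and summability of the unnormalized measure must be checked, which your grouping argument does using $\mu(\maI\cap\maV_2)<\mu(\maE(\maI))$ from $\textsc{Ncond}(G)$. What the paper's longer detour buys is structural information beyond the statement itself — the stationary laws of the detailed chains, the reversibility relation between $(B_n)$ and $(F_n)$, and the matched-letter observation of Subsection \ref{subsec:iid} — none of which follow from the direct verification; what your route buys is a shorter, self-contained proof of exactly the stated product form.
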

\noindent Theorem \ref{thm:FCFM} is proven in section \ref{sec:FCFM}. 

\begin{remark}
\label{rem:fini}\rm
If the model is finite, i.e. all nodes of $G$ have self-loops or in other words, $\maV_2=\emptyset$, then it readily follows from 
Theorem \ref{thm:FCFM} that the unique stationary probability on the finite state space $\mathbb W$, is given by 
\begin{equation*}
\left\{\begin{array}{ll}
\Pi_W(\varepsilon) &=\alpha;\\
\Pi_W\left(e_{\sigma(1)} \cdots e_{\sigma(|\maI|)}\right)&=\alpha\displaystyle\prod\limits_{i=1}^{|\mathcal{I}|} \frac{\mu\left(e_{\sigma(i)}\right)}{\mu(\mathcal{E}(\{e_{\sigma(1)},\dots,e_{\sigma(i)}\}))},\mbox{ for all }\maI\in \mathbb I(\check G),\,\sigma \in \mathfrak{S}_{|\mathcal{I}|},
\end{array}\right.\end{equation*}
with the normalizing constant 
\begin{equation*}
\alpha=\left[1+\!\sum\limits_{\mathcal{I}\in\mathbb{I}\left(\check G\right)}\sum\limits_{\sigma\in\mathfrak{S}_{|\mathcal{I}|}}\prod\limits_{i=1}^{|\mathcal{I}|} \frac{\mu\left(e_{\sigma(i)}\right)}{\mu(\mathcal{E}(\{e_{\sigma(1)},\dots,e_{\sigma(i)}\}))}\right]^{-1}\cdot 
\end{equation*}
\end{remark}

On another hand, as is shown in Theorem 3.2 of \cite{JMRS20}, all Max-Weight matching policies such that $\beta>0$ are maximal 
whenever $G$ is a graph. (Notice that Theorem 3.2 in \cite{JMRS20} is in fact shown only for $\beta=1$, however a generalization to any $\beta>0$ is straightforward, by modifying the rewards accordingly.) In particular, the maximality of `Match the Longest' ({\sc ml}) for GM models was first proven in Theorem 2 of \cite{MaiMoy16}, as a consequence of the corresponding result for EBM models, see Theorem 7.1 of \cite{BGM13}.   
This result can also be generalized to multigraphs: 
\begin{theorem}
\label{thm:ML}
Any Max-Weight policy $\Phi$ such that $\beta>0$ is maximal: for any multigraph $G$ that is not a bipartite graph, we have that 
{\sc stab}$(G,\Phi)=\textsc{Ncond}(G)$. 

\end{theorem}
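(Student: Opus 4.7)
By Proposition \ref{thm:mainmono}, only the inclusion $\textsc{Ncond}(G)\subset\textsc{stab}(G,\Phi)$ requires proof. The plan is to adapt the Foster--Lyapunov drift argument of \cite{JMRS20}, Theorem 3.2 --- established there for graphs without self-loops --- to the multigraph setting, working on the class-detail chain $(X_n)$ on $\mathbb{X}$ (which suffices by Remark \ref{rem:equiv}).

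Fix $\mu\in\textsc{Ncond}(G)$ and consider the quadratic Lyapunov function $L(x)\isdef \sum_{i\in\maV}x(i)^2$. Since $x(i)\in\{0,1\}$ for every $i\in\maV_1$, the $\maV_1$-contribution to $L$ and to its one-step drift is uniformly bounded, so the asymptotics are governed entirely by the $\maV_2$-coordinates. A direct computation from (\ref{eq:defccc}) gives
\begin{equation*}
\Delta L(x) = \sum_{v\in\maV}\mu(v)\Bigl[\carp{\maP(x,v)=\emptyset}\bigl(2x(v)+1\bigr) + \carp{\maP(x,v)\neq\emptyset}\bigl(1-2x(p_\Phi(x,v))\bigr)\Bigr].
\end{equation*}
Because the rewards $w_{i,j}$ are bounded and $\beta>0$, the Max-Weight choice $p_\Phi(x,v)$ differs from $\arg\max_{j\in\maP(x,v)}x(j)$ by at most $2(\max_{i,j}|w_{i,j}|)/\beta$, so up to $O(1)$ corrections $\Phi$ matches $v$ with a longest compatible queue.

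The condition $\mu\in\textsc{Ncond}(G)$ is a strict Hall-type system of inequalities. By an LP / max-flow argument tailored to multigraphs, it implies the existence of nonnegative matching rates $(\lambda_{i,j})_{(i,j)\in\maE}$ with $\lambda_{i,j}=\lambda_{j,i}$ and
\begin{equation*}
\sum_{j\in\maE(i)\setminus\{i\}}\lambda_{i,j} + 2\lambda_{i,i}\carp{i\in\maV_1} \;>\; \mu(i),\quad i\in\maV,
\end{equation*}
where the factor $2$ on self-loops reflects that a single self-loop match consumes two $v$-arrivals --- precisely the reason $\maV_1$-nodes escape the independent-set constraints defining $\textsc{Ncond}(G)$. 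Pairing these rates with the asymptotic \emph{Match the Longest} behaviour of $\Phi$ and controlling boundary corrections coming from empty compatible queues yields $\varepsilon>0$ and a finite set $K\subset\mathbb{X}$ such that $\Delta L(x)\le -\varepsilon\sum_{i\in\maV_2}x(i)$ for every $x\notin K$. Foster's criterion then gives positive recurrence of $(X_n)$, hence of $(W_n)$.

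The main obstacle is the strict feasibility step: one must show that the inequalities $\mu(\maI)<\mu(\maE(\maI))$ over $\maI\in\I(G)\subset 2^{\maV_2}$ really suffice to produce matching rates strictly dominating $\mu$ on \emph{every} node, including the self-looped ones for which no independent-set constraint is imposed. This requires a multigraph version of the standard max-flow / Hall argument in which self-loops count twice on the draining side and the source-side imbalance comes only from $\maV_2$; once this flow is constructed, the Max-Weight comparison and Foster--Lyapunov conclusion proceed essentially as in the graph case of \cite{JMRS20}.
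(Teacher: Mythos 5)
There is a genuine gap, and it sits exactly where you flag it yourself. Your argument hinges on the claim that $\mu\in\textsc{Ncond}(G)$ yields nonnegative matching rates $(\lambda_{i,j})$ with $\sum_{j\in\maE(i)\setminus\{i\}}\lambda_{i,j}+2\lambda_{i,i}\carp{i\in\maV_1}>\mu(i)$ for every $i\in\maV$. As written this system carries no information: nothing couples the $\lambda_{i,j}$ to $\mu$ from above, so it is trivially satisfiable by taking the rates huge; the meaningful statement must include a conservation or capacity constraint (e.g.\ that the rates are realizable given the arrival stream), and it is precisely that strict-feasibility statement, in the multigraph setting where self-loops consume two arrivals and the independent-set constraints only see $\maV_2$, that you would have to formulate and prove. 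You explicitly defer it as ``the main obstacle'', so the proposal does not actually close the argument. The second half is also asserted rather than proved: the passage from such rates plus ``asymptotic Match the Longest behaviour'' to a drift bound $\Delta L(x)\le-\varepsilon\sum_{i\in\maV_2}x(i)$ outside a finite set is not something you can cite from \cite{JMRS20}, whose drift analysis is carried out for graphs without self-loops; the boundary effects of the $\{0,1\}$-valued $\maV_1$-coordinates (an arrival at a self-looped class may either match its stored twin, match a $\maV_2$ neighbour, or join the buffer) enter the quadratic drift with the wrong sign in some configurations and must be handled explicitly.

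The paper avoids both difficulties by a reduction rather than a direct adaptation: it passes to the minimal blow-up graph $\hat G$, notes that the extension $\hat\Phi$ of a Max-Weight policy with $\beta>0$ is again Max-Weight on the (self-loop-free) graph $\hat G$, uses Lemma \ref{lemma:equivalenceNcond} to see that the half-split measure $\hat\mu_{1/2}$ lies in $\textsc{Ncond}(\hat G)$, invokes Theorem 3.2 of \cite{JMRS20} verbatim to get a uniformly negative drift of the quadratic function $Q$ for $(\hat G,\hat\Phi,\hat\mu_{1/2})$ outside a finite set, and then pulls this back to the multigraph via the coordinate-wise drift comparison $\Delta^{\Phi}_{\mu}Q(w)\le\hat\Delta^{\hat\Phi}_{\hat\mu}Q(w)$ of Proposition \ref{prop:extquad}. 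If you want to salvage your route you would need to state and prove the multigraph flow lemma (with the correct conservation constraints, self-loops counted twice) and redo the Max-Weight drift computation on $\mathbb X$ including the self-loop boundary terms; alternatively, the blow-up comparison gives the result with no new LP machinery at all.
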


\noindent Aside from {\sc fcfm} and Max-Weight policies, we can determine, or lower-bound, the stability region of the model for particular classes 
of multigraphs. 
\begin{definition} 
Let $G$ be a connected multigraph. 
We say that $G$ is complete $p$-partite, $p\ge 2$, if its maximal subgraph $\check G$ is complete $p$-partite. 
Then, the minimal blow-up graph $\hat G$ is itself called an {\em extended} complete $p$-partite graph. 
\end{definition}

Observe that an extended complete $p$-partite graph is {\em not} complete $p$-partite whenever the construction above is non-trivial, i.e. the multigraph in the above definition is not a graph, see an example on Figure \ref{fig:p-partiteCompletToBlow-upgraph}.
\begin{figure}[htb]
	\begin{center}
		\begin{tikzpicture}
		\fill (0,2) circle (2pt)node[above]{\scriptsize{1}};
		\fill (-1,0) circle (2pt)node[left]{\scriptsize{4}};
		\fill (1,0) circle (2pt)node[right]{\scriptsize{5}};
			\fill (-0.5,0.5) circle (2pt)node[above]{\scriptsize{$\;\;\;\;\;2$}};
		\fill (1.5,0.5) circle (2pt)node[right]{\scriptsize{3}};
		\draw[-] (1.5,0.5) -- (0,2);
		\draw[-] (-0.5,0.5) -- (0,2);
		\draw[-] (1,0) -- (0,2);
		\draw[-] (-1,0) -- (0,2);
		\draw[-] (-1,0) -- (1,0);
		\draw[-] (-1,0) -- (1.5,0.5);
		\draw[-] (-0.5,0.5) -- (1,0);
		\draw[-] (-0.5,0.5) -- (1.5,0.5);
		\draw[<-] (1.8,1) -- (2.2,1);
		\fill (4,2) circle (2pt)node[above]{\scriptsize{1}};
		\fill (3,0) circle (2pt)node[left]{\scriptsize{4}};
		\fill (5,0) circle (2pt)node[right]{\scriptsize{$\;5$}};
		\fill (3.5,0.5) circle (2pt)node[above]{\scriptsize{$\;\;\;\;\;2$}};
		\fill (5.5,0.5) circle (2pt)node[right]{\scriptsize{3}};
		\draw[-] (5.5,0.5) -- (4,2);
		\draw[-] (3.5,0.5) -- (4,2);
		\draw[-] (5,0) -- (4,2);
		\draw[-] (3,0) -- (4,2);
		\draw[-] (3,0) -- (5,0);
		\draw[-] (3,0) -- (5.5,0.5);
		\draw[-] (3.5,0.5) -- (5,0);
		\draw[-] (3.5,0.5) -- (5.5,0.5);
		\draw[thick,-] (5,0) to [out=-50,in=40,distance=11mm] (5,0);
		\draw[->] (5.8,1) -- (6.2,1);
		\fill (8,2) circle (2pt)node[above]{\scriptsize{1}};
		\fill (7,0) circle (2pt)node[left]{\scriptsize{4}};
		\fill (7.5,0.5) circle (2pt)node[above]{\scriptsize{$\;\;\;\;\;2$}};
		\fill (9,0) circle (2pt)node[right]{\scriptsize{5}};
		\fill (9.5,0.5) circle (2pt)node[right]{\scriptsize{3}};
		\fill (8,-0.5) circle (2pt)node[right]{\scriptsize{$\;\,\cop{5}$}};
		\draw[-] (9.5,0.5) -- (8,2);
		\draw[-] (7.5,0.5) -- (8,2);
		\draw[-] (9,0) -- (8,2);
		\draw[-] (7,0) -- (8,2);
		\draw[-] (7,0) -- (9,0);
		\draw[-] (7,0) -- (9.5,0.5);
		\draw[-] (7.5,0.5) -- (9,0);
		\draw[-] (7.5,0.5) -- (9.5,0.5);
		\draw[-] (8,-0.5) -- (9,0);
		\draw[-] (8,-0.5) -- (7.5,0.5);
		\draw[-] (8,-0.5) -- (7,0);
		\draw[-] (8,-0.5) -- (8,2);
		\end{tikzpicture}
		\caption{A multigraph $G$ (middle), its maximal complete $3$-partite subgraph $\check G$ (left), and extended complete $p$-partite graph $\hat{G}$ (right).}
		\label{fig:p-partiteCompletToBlow-upgraph}
	\end{center}
\end{figure}
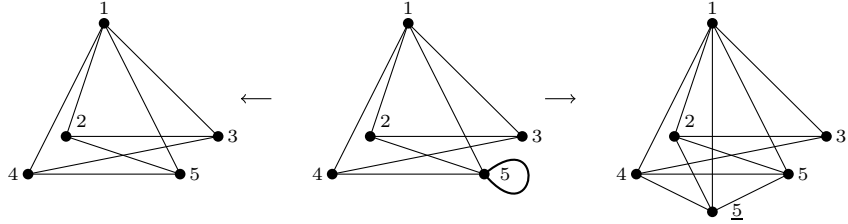

\begin{theorem}
	\label{thm:ppartite}
	Let $G$ be a complete $p$-partite multigraph, $p\ge 2$. Then, 
	\begin{enumerate}
		\item[(i)] If $p\ge 3$ or $\maV_1\neq \emptyset$, then any $\maV_2$-favorable matching policy $\Phi$ is maximal, that is, $\textsc{stab}(G,\Phi)=\textsc{Ncond}(G).$
        \item[(ii)] If $p\ge 3$, then $\textsc{Ncond}\left(\check G\right)\subset \textsc{stab}(G,\Phi)$, for any admissible matching policy $\Phi$. 
  \end{enumerate}  
   \end{theorem}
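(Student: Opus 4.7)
The plan is to prove both parts via Foster--Lyapunov arguments, exploiting a key structural feature of complete $p$-partite multigraphs: since $\check G$ is complete $p$-partite with parts $\maI_1,\dots,\maI_p$, any two items of $\maV_2$-classes lying in distinct parts $\maI_k,\maI_\ell$ would be connected in $G$ and therefore cannot coexist in the buffer by the constraint in \eqref{eq:defmbW}. Consequently, at every time $n$, all $\maV_2$-items of $W_n$ are concentrated in a single part $\maI_{k(n)}$. Since moreover each self-looped class admits at most one item in line, so that $|W_n|_{\maV_1}\le|\maV_1|$, the set $C\isdef\{W\in\mathbb W:|W|_{\maV_2}=0\}$ is finite; it will serve as the exceptional set in both arguments. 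Irreducibility of $(W_n)$ on $\mathbb W$ follows from standard clearing arguments.

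For (i), fix $\mu\in\textsc{Ncond}(G)$ and take $V(W)\isdef|W|_{\maV_2}$. For any $W\in\mathbb W\setminus C$, whose $\maV_2$-items lie in some $\maI_k$ (necessarily with $\maI_k\cap\maV_2\neq\emptyset$), the next arrival $v$ splits into three exhaustive cases. If $v\in\bigcup_{\ell\neq k}\maI_\ell$ (probability $1-\mu(\maI_k)$), then $v$ is compatible with every $\maV_2$-item of $\maI_k$ in line and the $\maV_2$-favorable hypothesis forces $v$ to be matched with one of them, yielding $\Delta V=-1$. If $v\in\maI_k\cap\maV_2$ (probability $\mu(\maI_k\cap\maV_2)$), then the $\maV_2$-items in line lie in $\maI_k$ and are incompatible with $v$, so $\Delta V\le+1$. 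If $v\in\maI_k\cap\maV_1$, then trivially $\Delta V=0$. Summing gives
\[
E[V(W_{n+1})-V(W_n)\mid W_n=W]\le\mu(\maI_k\cap\maV_1)+2\mu(\maI_k\cap\maV_2)-1,
\]
and, since $\maI_k\cap\maV_2\in\I(G)$ with $\maE(\maI_k\cap\maV_2)=\bigcup_{\ell\neq k}\maI_\ell$, the condition $\mu\in\textsc{Ncond}(G)$ is precisely the statement that this right-hand side is negative. Taking the maximum over the finitely many admissible $k$ yields a uniform bound $\le-\epsilon<0$. Foster's theorem together with Proposition \ref{thm:mainmono} then delivers $\textsc{stab}(G,\Phi)=\textsc{Ncond}(G)$; the hypothesis $p\ge3$ or $\maV_1\neq\emptyset$ is only there to exclude the complete bipartite case, where $\textsc{Ncond}(G)=\emptyset$ by Proposition \ref{prop:ncond} and the claim is vacuous.

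For (ii), fix $\mu\in\textsc{Ncond}(\check G)$ and use the coarser Lyapunov function $V(W)\isdef|W|$. For $W\in\mathbb W\setminus C$ with $\maV_2$-items in $\maI_k$, any $v\in\bigcup_{\ell\neq k}\maI_\ell$ finds such a $\maV_2$-item as a compatible partner and is therefore matched by any admissible $\Phi$ regardless of which compatible class is actually selected, yielding $\Delta V=-1$; the complementary case $v\in\maI_k$ gives $\Delta V\le+1$. The resulting bound
\[
E[V(W_{n+1})-V(W_n)\mid W_n=W]\le 2\mu(\maI_k)-1
\]
is strictly negative uniformly in $k$, since $\maI_k\in\I(\check G)$ and $\mu\in\textsc{Ncond}(\check G)$ force $\mu(\maI_k)<1/2$, and Foster's theorem concludes. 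The main subtlety I anticipate is the asymmetry between the two parts: in (ii) the total size $|W|$ decreases automatically as soon as a match occurs, so no control over the match's target is needed; in (i), by contrast, one must track the $\maV_2$-subqueue specifically, which only decreases if the policy picks a $\maV_2$-partner, and that is precisely where $\maV_2$-favorability becomes essential. The drift inequality for (i) is moreover tight, being saturated by the single instance of $\textsc{Ncond}(G)$ applied to $\maI_k\cap\maV_2$ with no slack from smaller independent sets.
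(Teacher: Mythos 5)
Your proof is correct in both parts, and it is somewhat more self-contained than the paper's. For (i) you follow essentially the paper's route: the paper likewise localizes the buffer content in the unique part of $\check G$ (maximal independent set) containing the stored classes, uses $\maV_2$-favorability to guarantee that every arrival from another part depletes the $\maV_2$-queue, and invokes exactly the $\textsc{Ncond}(G)$ inequality for that part intersected with $\maV_2$; the only real difference is the Lyapunov function, since the paper uses the weighted linear function $L_\delta$ (small positive weight on the $\maV_1$-coordinates) and treats the case $\maV_1=\emptyset$, $p\ge 3$ separately by citing Theorem 2 of \cite{MaiMoy16}, whereas your choice $|w|_{\maV_2}$ (zero weight on $\maV_1$) handles both cases in one computation — this is legitimate, as Foster's criterion does not require positive weights, and in any case the level sets of $|w|_{\maV_2}$ on $\mathbb W$ are finite because $|w|_{\maV_1}\le |\maV_1|$ in any admissible state. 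For (ii) your route is genuinely different: the paper obtains the negative drift of $L$ on the multigraph by combining the drift-comparison inequality (\ref{eq:driftL}) of Proposition \ref{prop:EspGmultiLeqEspGgrapg} with the bound (\ref{eq:majoreG}) imported from \cite{MaiMoy16} for the reduced complete $p$-partite graph $\check G$, whereas you compute the drift of the total queue length directly on the multigraph using the same single-part concentration; your argument in effect re-proves the cited bound and dispenses with the comparison machinery (which the paper, however, reuses later for Proposition \ref{prop:extppartite}), at the price of being specific to the complete $p$-partite structure. Two minor remarks: your reading of the hypothesis in (i) is right in spirit (if $p=2$ and $\maV_1=\emptyset$ then $G$ is complete bipartite, $\textsc{Ncond}(G)=\emptyset$ and the set equality is trivial, although ``maximal'' is then not even defined in the paper's terminology), and in (ii) the hypothesis $p\ge 3$ similarly only serves to make $\textsc{Ncond}(\check G)$ non-empty, so the fact that your drift computation never uses it is not a gap.
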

   
With the above results in hands, we have the following panorama regarding the stability region of a matching model on a connected multigraph $G$: 
\begin{enumerate}
\item[(i)] Any measure $\mu$ that does not belong to the set {\sc Ncond}($G$) makes the system {unstable}; 
\item[(ii)] If $G$ is a bipartite graph, then the model cannot be stable;
\item[(iii)] Otherwise, the region {\sc Ncond}($G$) is necessarily non-empty,  
and the models $(G,\textsc{fcfm},\mu)$ and $(G,\Phi,\mu)$ for any Max-Weight policy $\Phi$ with $\beta>0$, are stable for any $\mu\in\textsc{Ncond}(G)$. In particular, 
in that case  {\sc Ncond}($G$) necessarily includes the measure $\mu_{\tiny{\mbox{deg}}}$ defined by (\ref{eq:defmudeg}), so 
any model $\left(G,\Phi,\mu_{\tiny{\mbox{deg}}}\right)$, for $\Phi=\textsc{fcfm}$ or $\Phi=\textsc{mw}$ with $\beta>0$, is stable.  
\item[(iv)] For any complete $p$-partite multigraph ($p\ge 2$) that is not a bipartite graph and any  $\mu\in\textsc{Ncond}\left({G}\right)$, 
any model $(G,\Phi,\mu)$ such that $\mu\in\textsc{Ncond}\left(\check{G}\right)$ or $\Phi$ is $\maV_2$-favorable, is stable. 
\end{enumerate} 

As a by-product of Theorem \ref{thm:ppartite} we can determine, or lower-bound, the stability region of GM models on extended complete 
$p$-partite graphs.

\begin{definition}
\label{def:extendsmeas}
For any measures $\mu\in\mathscr M(\maV)$ and $\hat\mu\in\mathscr M(\hat\maV)$, we say that $\hat \mu$ extends $\mu$ on $\hat G$, and that 
$\mu$ reduces $\hat \mu$ on $G$, if 
 \[\left\{\begin{array}{ll}
	\displaystyle\hat{\mu}(i)&=\mu(i),\quad \text{for all} \; i\in\maV_2\,;\\
	\displaystyle\hat{\mu}(i) + \hat{\mu}(\underline{i}) &=\mu(i),\quad \text{for all} \; i\in\maV_1.
	\end{array}\right.
\]
\end{definition}


\begin{definition}
\label{def:extendspol}
Let $\Phi$ and $\hat\Phi$ be two admissible matching policies, respectively on $G$ and $\hat G$. 
We say that $\hat\Phi$ extends $\Phi$ on $\hat G$ if, for any $\mu\in\mathscr M({\maV})$ and $\hat\mu\in\mathscr M\left(\hat\maV\right)$, whenever both systems $(G,\Phi,\mu)$ and $\left(\hat G,\hat\Phi,\hat\mu\right)$ are in the same state $w\in \mathbb W$ and welcome the same arrival, $\Phi$ and $\hat\Phi$ induce the same choice of match, if any. 
\end{definition}

\begin{proposition}
	\label{prop:extppartite}
	Let $\hat G$ be an extended complete $p$-partite graph, $p\ge 2$, and $\check G$ be its reduced graph. 
	\begin{enumerate}
	\item[(i)] If $p\ge 3$ or $\maV_1\neq \emptyset$, then for any matching policy $\hat \Phi$ on $\hat G$ that extends a $\maV_2$-favorable policy on $G$, 
		$\textsc{stab}(\hat G,\hat \Phi)=\textsc{Ncond}(\hat G).$
        \item[(ii)] If $p\ge 3$, then for any measure $\hat\mu$ on $\hat G$ whose reduced measure $\mu$ is an element of $\textsc{Ncond}(\check G)$,
         and any matching policy $\hat{\Phi}$ on $\hat G$, the model $(\hat G,\hat \Phi,\hat \mu)$ is stable. 
  \end{enumerate}  
   \end{proposition}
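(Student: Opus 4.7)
The plan is to deduce Proposition~\ref{prop:extppartite} from Theorem~\ref{thm:ppartite} applied to the underlying complete $p$-partite multigraph $G$, via a coupling between the dynamics of $(\hat G,\hat\Phi,\hat\mu)$ and of $(G,\Phi,\mu)$ that synchronizes arrivals through the canonical projection $\pi:\hat\maV\to\maV$ defined by $\pi(v)=v$ for $v\in\maV$ and $\pi(\cop{v})=v$ for $v\in\maV_1$.

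I first establish the key bridge between necessary conditions: for any $\hat\mu\in\mathscr{M}(\hat\maV)$ with reduction $\mu$, one has $\hat\mu\in\textsc{Ncond}(\hat G)\Rightarrow\mu\in\textsc{Ncond}(G)$. Indeed, for every independent set $\maI$ of $G$ (necessarily $\maI\subset\maV_2$), one readily checks that $\maI$ is also independent in $\hat G$, that $\hat\mu(\maI)=\mu(\maI)$ by the definition of the reduction, and that $\hat\maE(\maI)=\maE(\maI)\cup\cop{\maE(\maI)\cap\maV_1}$; combining this with the identities $\hat\mu(v)=\mu(v)-\hat\mu(\cop{v})$ for $v\in\maV_1$ and $\hat\mu(v)=\mu(v)$ for $v\in\maV_2$ yields $\hat\mu(\hat\maE(\maI))=\mu(\maE(\maI))$, so that the \textsc{Ncond} inequality transfers. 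Symmetrically, I associate to each admissible policy $\hat\Phi$ on $\hat G$ an admissible policy $\Phi$ on $G$ by letting $\Phi$ perform in $G$ whichever match $\hat\Phi$ performs in $\hat G$ after composing with $\pi$; if moreover $\hat\Phi$ extends some $\Phi'$ in the sense of Definition~\ref{def:extendspol}, then this induced $\Phi$ coincides with $\Phi'$.

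Next, on the same probability space I set $V_n\isdef\pi(\hat V_n)$ with $\hat V_n\sim\hat\mu$ i.i.d., and analyse the joint evolution of the buffer contents $(\hat W_n,W_n)$. The central observation is that, because $(v,\cop{v})\in\hat\maE$ for every $v\in\maV_1$, the buffer of $\hat G$ never contains simultaneously items of both classes $v$ and $\cop{v}$, so the projection $\pi(\hat W_n)$ may fail to lie in $\mathbb{W}$ only through the presence of several items of a class $v\in\maV_1$. Each such \emph{excess} letter in $\pi(\hat W_n)$ corresponds to a self-match that $\Phi$ would have performed in $G$ but that has no counterpart in $\hat G$; exploiting the $\maV_2$-favorable property of $\Phi$ for (i) (respectively, choosing $\Phi$ to be the induced policy for (ii)), a careful bookkeeping of arrivals and matches shows that these discrepancies are bounded in number by $|\maV_1|$, whence $|\hat W_n|\le|W_n|+|\maV_1|$ almost surely. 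Positive recurrence of $\{W_n\}$ therefore transfers to $\{\hat W_n\}$ via a standard Foster--Lyapunov argument.

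The two assertions then follow. For (i), Proposition~\ref{thm:mainmono} gives $\textsc{stab}(\hat G,\hat\Phi)\subset\textsc{Ncond}(\hat G)$; conversely, starting from $\hat\mu\in\textsc{Ncond}(\hat G)$, the reduction step yields $\mu\in\textsc{Ncond}(G)$, and Theorem~\ref{thm:ppartite}(i) applied to $G$ with the $\maV_2$-favorable policy $\Phi$ yields positive recurrence of $\{W_n\}$, which the coupling then transfers to $\{\hat W_n\}$. For (ii), the condition $\mu\in\textsc{Ncond}(\check G)$ combined with $p\ge 3$ allows the direct application of Theorem~\ref{thm:ppartite}(ii) to $G$ for any admissible $\Phi$, and the same coupling concludes for any $\hat\Phi$ on $\hat G$. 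The main obstacle is the third paragraph, i.e.\ obtaining a rigorous uniform bound on $|\hat W_n|-|W_n|$ despite the fact that $\hat{\mathbb W}\supsetneq\mathbb{W}$: this requires a careful case-by-case comparison of the match choices made in the two models when arrivals are of class $v\in\maV_1$ and the buffers contain both ``ordinary'' and ``copy'' items, together with crucial use of the complete $p$-partite structure of $\check G$ to ensure that the two buffers live in the same part at any time.
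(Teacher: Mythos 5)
There is a genuine gap, and it lies exactly where you flag the main difficulty: the claimed pathwise bound $|\hat W_n|\le |W_n|+|\maV_1|$ under the synchronized coupling $V_n=\pi(\hat V_n)$ is false, and more fundamentally the direction of your comparison is the wrong one. Consider a run of $2k$ consecutive arrivals all of class $i\in\maV_1$ (and none of class $\cop{i}$), which has positive probability for every $k$. In the multigraph model $(G,\Phi,\mu)$ these arrivals self-match pairwise, so $|W_n|$ stays at most $1$, whereas in $(\hat G,\hat\Phi,\hat\mu)$ two $i$-items can never be matched together, so $|\hat W_n|$ grows to $2k$: the discrepancy is unbounded, not bounded by $|\maV_1|$. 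The paper's own drift inequality (Proposition \ref{prop:EspGmultiLeqEspGgrapg}) makes the hierarchy explicit: $\Delta^{\Phi}_{\mu}L\le\hat\Delta^{\hat\Phi}_{\hat\mu}L\le\check\Delta^{\check\Phi}_{\mu}L$, i.e.\ the multigraph model is the \emph{most} stable of the three, so positive recurrence of $\suite{W_n}$ cannot be transferred upward to $\suite{\hat W_n}$. Your reduction step for (i) also discards precisely the information that matters: $\textsc{Ncond}(\hat G)$ constrains how $\mu(i)$ is split between $\hat\mu(i)$ and $\hat\mu(\cop{i})$ (through independent sets of $\hat G$ meeting $\maV_1\cup\cop{\maV_1}$), while the reduced condition $\mu\in\textsc{Ncond}(G)$ does not. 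For instance, on the multigraph of Figure \ref{fig:ExGLandGTilde}, take $\mu(1)=0.1$, $\mu(2)=0.3$, $\mu(3)=0.6$ (so $\mu\in\textsc{Ncond}(G)$ and the multigraph model is stable under a $\maV_2$-favorable policy), but split $\hat\mu(3)=0.58$, $\hat\mu(\cop{3})=0.02$: then $\hat\mu(\{1,3\})>1/2$, so $\hat\mu\notin\textsc{Ncond}(\hat G)$ and the blow-up model is unstable for every policy by Proposition \ref{thm:mainmono}. This shows that no coupling argument using only the stability of $(G,\Phi,\mu)$ with the projected arrival stream can yield stability of $(\hat G,\hat\Phi,\hat\mu)$.

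For the record, the paper argues in the opposite directions. For (i) it does not pass through the multigraph model at all: it re-runs the weighted-linear Lyapunov argument of Theorem \ref{thm:ppartite}(i) (the function $L_\delta$) directly on $\hat G$, with $\maV_1\cup\cop{\maV_1}$ playing the role of $\maV_1$, which is where the full strength of $\hat\mu\in\textsc{Ncond}(\hat G)$ is used. For (ii) it dominates the blow-up model from above by the reduced-graph model $\check G$: the hypothesis $\mu\in\textsc{Ncond}(\check G)$ gives a uniform negative drift for $L$ in the $(\check G,\check\Phi,\mu)$ system, the right-hand inequality of (\ref{eq:driftL}) passes it to the $(\hat G,\hat\Phi,\cdot)$ system on states of $\mbW$, and arbitrary states of $\hat{\mbW}$ and arbitrary splits $\hat\mu$ are handled by the symmetrization $\gamma$ exchanging $i$ and $\cop{i}$ (using that $\hat\mu\circ\gamma$ still extends $\mu$ and that $i$, $\cop{i}$ have the same neighborhoods in $\hat G$). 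If you want to salvage a comparison-type proof, it must be of this downward kind (from $\check G$ to $\hat G$), not upward from $G$.
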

   
\noindent The proofs of Theorem \ref{thm:ML}, Theorem \ref{thm:ppartite} and Proposition \ref{prop:extppartite} are given in section \ref{sec:otherproofs}. 

\section{A few examples}
\label{sec:examples}

In this section, we illustrate our main results by different examples.

\begin{ex}\label{ExampleSquare}\rm Consider the multigraph $G$ of Figure \ref{fig:ExampleSquare}, made of four nodes arranged in a square, wit a self-loop at each node.
\begin{figure}[htb]
\begin{center}	
\begin{tikzpicture}
\draw (-1,1)--(1,1) ;
\draw (1,1)--(1,-1) ;
\draw (1,-1)--(-1,-1) ;
\draw (-1,-1)--(-1,1) ;

\draw[thick,-] (-1,1) to [out=130,in=240,distance=15mm] (-1,1);
\draw [thick,-] (1,1) to [out=50,in=-50,distance=15mm] (1,1);
\draw[thick,-] (-1,-1) to [out=130,in=240,distance=15mm] (-1,-1);
\draw [thick,-] (1,-1) to [out=50,in=-50,distance=15mm] (1,-1);

\fill (-1,1) circle (2pt)node[left]{$1\;$} ;
\fill (1,1) circle (2pt)node[right]{$\;2$} ;
\fill (1,-1) circle (2pt)node[right]{$\;3$} ;
\fill (-1,-1) circle (2pt)node[left]{$4\;$} ;
\end{tikzpicture}
\caption{Multigraph $G$ of Example~\ref{ExampleSquare}.}\label{fig:ExampleSquare}
\end{center}
\end{figure}
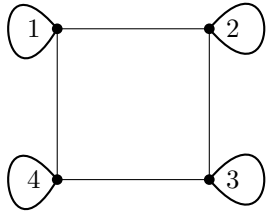
Since all nodes have a self-loop, it follows from Remark~\ref{rmk:stab} that any matching model on $G$ is necessarily stable, that is, for any admissible $\Phi$, we have that $\textsc{stab}(G,\Phi)=\mathscr M(\maV).$ Let us focus on the $\textsc{fcfm}$ policy. 
The set of admissible queue details is given by $\mathbb W = \{\varepsilon,1,2,3,4,13,24,31,42\},$ and as a consequence of Remark~\ref{rem:fini}, we can compute explicitly $\Pi_W$, obtaining the following values:
\[\left\{\begin{array}{llll}
& \Pi_W(\varepsilon)=\alpha && \\
& \Pi_W(1)=\alpha\frac{\mu(1)}{1-\mu(3)} &&
\Pi_W(2)=\alpha\frac{\mu(2)}{1-\mu(4)} \\
& \Pi_W(3)=\alpha\frac{\mu(3)}{1-\mu(1)} &&
\Pi_W(4)=\alpha\frac{\mu(4)}{1-\mu(2)} \\
& \Pi_W(13)=\alpha\frac{\mu(1)}{1-\mu(3)}\mu(3) &&
\Pi_W(24)=\alpha\frac{\mu(2)}{1-\mu(4)}\mu(4) \\
& \Pi_W(31)=\alpha\frac{\mu(3)}{1-\mu(1)}\mu(1) &&
\Pi_W(42)=\alpha\frac{\mu(4)}{1-\mu(2)}\mu(2),
\end{array}\right.\]
with
$$
\alpha=\left[1 + \mu(1)\frac{1+\mu(3)}{1-\mu(3)}
+ \mu(2)\frac{1+\mu(4)}{1-\mu(4)}
+ \mu(3)\frac{1+\mu(1)}{1-\mu(1)}
+ \mu(4)\frac{1+\mu(2)}{1-\mu(2)} \right]^{-1},
$$
using the fact that $\mathbb{I}\left(\check G\right)=\{\{1\},\{2\},\{3\},\{4\},\{1,3\},\{2,4\}\}$.
\end{ex}

\begin{ex}\rm
Consider the multigraph $G$ (at the middle) of Figure~\ref{fig:GgraphGLmultigZAndGTilde}. 
From Theorems \ref{thm:FCFM} and \ref{thm:ML}, both the stability region $\textsc{stab}(G,\textsc{fcfm})$ under First Come, First Matched, and the stability region $\textsc{stab}(G,\textsc{mw})$ 
under any Max-Weight policy, coincide with the set 
\begin{align*}
\textsc{Ncond}(G) &=\left\{\mu\in\mathscr M(\maV):\mu(1) < \mu(2),\,\mu(\{1,3\})\vee\mu(\{1,4\})<{1\over 2}\right\}.
\end{align*}
\end{ex}

\begin{ex}\rm
	Consider now the multigraph $G$ of Figure~\ref{fig:ExGLandGTilde}, whose maximal subgraph is a complete $2$-partite graph of {order} 3 (i.e., a string of 3 nodes). 
	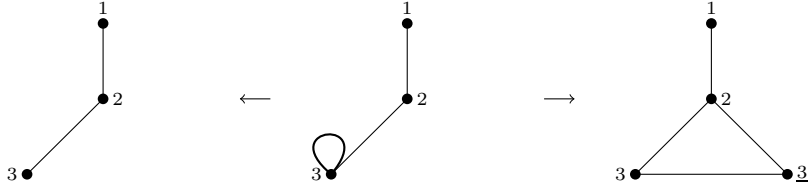
\begin{figure}[htb]
	\begin{center}
		\begin{tikzpicture}
			\fill (0,2) circle (2pt)node[above]{\scriptsize{1}};
		\fill (0,1) circle (2pt)node[right]{\scriptsize{2}};
		\fill (-1,0) circle (2pt)node[left]{\scriptsize{3}};
		\draw[-] (0,1) -- (-1,0);
		\draw[-] (0,1) -- (0,2);
		\draw[<-] (1.8,1) -- (2.2,1);
		\fill (4,2) circle (2pt)node[above]{\scriptsize{1}};
		\fill (4,1) circle (2pt)node[right]{\scriptsize{2}};
		\fill (3,0) circle (2pt)node[left]{\scriptsize{3}};
		\draw[-] (4,1) -- (3,0);
		\draw[-] (4,1) -- (4,2);
		\draw[->] (5.8,1) -- (6.2,1);
		\fill (8,2) circle (2pt)node[above]{\scriptsize{1}};
		\fill (8,1) circle (2pt)node[right]{\scriptsize{2}};
		\fill (7,0) circle (2pt)node[left]{\scriptsize{3}};
		\fill (9,0) circle (2pt)node[right]{\scriptsize{$\cop{3}$}};
		\draw[-] (8,1) -- (9,0);
		\draw[-] (8,1) -- (7,0);
		\draw[-] (8,1) -- (8,2);
		\draw[-] (7,0) -- (9,0);
		\draw[thick,-] (3,0) to [out=50,in=140,distance=10mm] (3,0);
		\end{tikzpicture}
		\caption{A multigraph on the complete 2-partite graph of size 3 (left), and its minimal blow-up graph (right).}
		\label{fig:ExGLandGTilde}
	\end{center}
\end{figure}

\noindent We easily obtain that 
\begin{align*}
\textsc{Ncond}(G)&=\left\{\mu\in \mathscr M(\maV)\,:\, \mu(1) <\mu(2)<{1 \over 2}\right\},\\
\textsc{Ncond}(\hat G) &= \left\{\mu\in\mathscr M(\hat\maV):\mu(1) < \mu(2),\,\mu(2)\vee\mu(\{1,3\})\vee\mu(\{1,\cop{3}\})<{1\over 2}\right\}.
\end{align*}
In view of Theorems \ref{thm:FCFM} and \ref{thm:ML}, the respective stability regions $\textsc{stab}(G,\textsc{fcfm})$ and $\textsc{stab}(G,\textsc{mw})$ under First Come, First Matched, 
or any Max-Weight policy, coincide with $\textsc{Ncond}(G)$.  

Let us first focus on the \textsc{fcfm} policy. The set of admissible queue details is given by
$$\mathbb{W}=\{\varepsilon\}\cup\left\{1^k : k\geq 1\right\}\cup\left\{2^k : k\geq 1 \right\}\cup\left\{1^r 3 1^{k-r} : k\geq 0, \; 0\leq r\leq k \right\}.$$
By Theorem~\ref{thm:FCFM}, we have that 
\[\left\{\begin{array}{ll}
&\Pi_W(\varepsilon)=\alpha \\
&\Pi_W(1^k)=\alpha\left(\frac{\mu(1)}{\mu(2)}\right)^k \qquad \Pi_W(2^k)=\alpha\left(\frac{\mu(2)}{1-\mu(2)}\right)^k \\
&\Pi_W\left(1^r 3 1^{k-r}\right)= \alpha\left(\frac{\mu(1)}{\mu(2)}\right)^r \times \frac{\mu(3)}{1-\mu(1)} \times \left(\frac{\mu(1)}{1-\mu(1)}\right)^{k-r},
\end{array}\right.\]
and since $\mathbb{I}\left(\check G\right)=\{\{1\},\{2\},\{3\},\{1,3\}\}$, we can express $\alpha$ as follows,
\begin{multline*}
\alpha=\biggl[1 + \frac{\mu(1)}{\mu(2)-\mu(1)} + \frac{\mu(2)}{1-2\mu(2)} + \frac{\mu(3)}{1-\mu(1)} \\
+ \frac{\mu(1)}{\mu(2)-\mu(1)}\frac{\mu(3)}{1-2\mu(1)}+\frac{\mu(3)}{1-\mu(1)}\frac{\mu(1)}{1-2\mu(1)} \biggl]^{-1}.\end{multline*}

Second, consider a matching policy $\hat\Phi$ such that a $2$-item always prioritizes a $1$-item over a $3$ or a $\cop{3}$-item. Then $\hat\Phi$ extends a $\maV_2$-favorable policy 
$\Phi$ on $\hat G$. Thus, from Proposition \ref{prop:extppartite}(i), the stability region of the system is $\textsc{Ncond}(\hat G)$, 
in other words $\hat\Phi$ is maximal on $\hat G$. We thereby generalize with a very simple proof, the result of Lemma 3 of \cite{MaiMoy16} to the case where $\mu(3) \ne \mu(\cop{3})$. 
Last, in view of Theorem \ref{thm:ppartite}(i), any $\maV_2$-favorable matching policy $\Phi$ on $G$ (i.e., such that $2$ prioritizes $1$ over $3$) is maximal, that is, the two sets $\textsc{stab}(G)$ and $\textsc{Ncond}(G)$ coincide. 
\end{ex}

\begin{ex}\rm
Last, consider the multigraph $G$ represented in (the middle figure of) Figure 
\ref{fig:p-partiteCompletToBlow-upgraph}. The maximal subgraph is $3$-partite complete, and we readily obtain that 
\begin{align*}
\textsc{Ncond}(\check G) &=\left\{\mu\in \mathscr M(\maV)\,:\, \mu(1) \vee \mu(\{2,4\}) \vee \mu(\{3,5\}) <{1\over 2}\right\};\\
\textsc{Ncond}(G) &=\left\{\mu\in \mathscr M(\maV)\,:\, \mu(1) \vee \mu(\{2,4\})<{1\over 2},\;\mu(3) < \mu(\{1,2,4\}) \right\};\\
\textsc{Ncond}(\hat G) &=\left\{\mu\in \mathscr M(\maV)\,:\, \mu(1) \vee \mu(\{2,4\}) \vee \mu(\{3,5\}) \vee \mu(\{3,\cop{5}\}) <{1\over 2}\right\}.
\end{align*}
Then, from Theorems \ref{thm:FCFM} and \ref{thm:ML}, the respective stability regions $\textsc{stab}(G,\textsc{fcfm})$ and $\textsc{stab}(G,\textsc{mw})$ under First Come, First Matched, 
or any Max-Weight policy coincide with the set $\textsc{Ncond}(G)$. From Theorem \ref{thm:ppartite}(i), for any policy $\Phi$ on $G$ according to which all items prioritize $3$-items over $5$ items is maximal, i.e. 
$\textsc{stab}(G,\Phi)=\textsc{Ncond}(G)$. From Theorem \ref{thm:ppartite}(ii), any policy $\Phi$ on $G$ is such that $\textsc{Ncond}(\check G)\subset \textsc{stab}(G,\Phi)$. 
Last, from Proposition \ref{prop:extppartite}, any policy $\hat\Phi$ on $\hat G$ giving priority to $3$-items over $5$ and $\cop{5}$-items is maximal, whereas for any matching policy 
$\hat\Phi$ and any measure $\hat\mu$ on $\hat\maV$ extending a measure of $\textsc{Ncond}(\check G)$, the model $(\hat G,\hat\Phi,\hat\mu)$ is stable. 
\end{ex}

\section{Proof of Proposition \ref{prop:ncond}}\label{sec:ncond}

%
The proof of the fact that $\textsc{Ncond}(G)=\emptyset$ whenever $G$ is a bipartite graph was given in Theorem 1 of \cite{MaiMoy16}. We reproduce it hereafter for easy reference: 
Let us assume that $G$ is a connected bipartite graph, and let $(A,B)$ be a bipartition of $\maV$ such that if $i\v j$, then $(i,j)\in (A\times B)\cup (B\times A)$. Then, $A, B\in \I(G)$, and $\maE(A)=B, \maE(B)=A$. So, for any measure $\mu \in \mathscr M(\maV)$, we have either $\mu(A) \geq \mu(\maE(A))$ or  $\mu(B) \geq \mu(\maE(B))$, meaning that $\textsc{Ncond}(G)=\emptyset.$

\medskip

In order to prove the converse statement,  
we show that if $G$ is a connected multigraph such that $\mu_{\tiny{\mbox{deg}}}\not\in\textsc{Ncond}(G)$, then $G$ is a bipartite graph. 

For a subset $A$ of $\maV$, let us denote by $\out(A)$ the set of edges having at least one extremity in $A$, where we identify the edges $(i,j)$ and $(j,i)$, and where we include self-loops. Formally, 
$$\out(A)\isdef \{\{i,j\}\subset \maV : i\ne j,\,i \v j \mbox{ and } \{i,j\} \cap A\not=\emptyset\}\cup \{(i,i),\,i\in\maV_1\cap A\}.$$ 
Observe that we have the following properties.
\begin{enumerate}
\item For any $A\subset \maV$, $\out(A)\subset\out(\maE(A))$. 
\item For any $A\subset \maV$,
$\sum_{i\in A} \deg(i)\geq \card \out(A)$, so that $\mudeg(A)\geq {1\over |\maE|} \card \out(A)$, with equality if $A \in \I(G)$.
\end{enumerate}

Let us assume that $\mudeg\not\in\textsc{Ncond}(G)$, meaning that there exists $\maI \in \I(G)$ with $\mudeg(\maI) \geq \mudeg(\maE(\maI))$. By the property (2) above, we have 
$\mudeg(\maI)={1\over |\maE|} \card\out(\maI)$ and $\mudeg(\maE(\maI))\geq{1\over |\maE|} \card\out(\maE(\maI))$, 
so that $\card\out(\maI)\geq \card\out(\maE(\maI))$. But by property (1), $\out(\maI)\subset \out(\maE(\maI))$, which implies that $\out(\maI)=\out(\maE(\maI))$. 

Let us prove that $(\maI,\maE(\maI))$ is a bipartition of $\maV$ such that if $i\v j$, then $(i,j)\in (\maI\times \maE(\maI))\cup (\maE(\maI)\times\maI)$. 
\begin{itemize}
\item Since $\maI \in \I(G),$ $\maI\cap \maE(\maI)=\emptyset$. 
\item Using that $\out(\maI)=\out(\maE(\maI))$, we deduce that $\maE(\maE(\maI))=\maI$. So, the vertices reachable from $\maI$ all belong to $\maI\cup \maE(\maI)$. The multigraph $G$ being connected, this implies that $\maI\cup\maE(\maI)=\maV$.
\item Since $\maI \in \I(G),$ there is no edge between vertices of $\maI$. Furthermore, since $\out(\maI)=\out(\maE(\maI))$, there is no edge between vertices of $\maE(\maI)$.
\end{itemize}
Consequently, $G$ is a bipartite graph, which ends the proof.


\section{Proof of Theorem \ref{thm:FCFM}}
\label{sec:FCFM}

Let us recall that the multigraph $G=(\mathcal{V},\mathcal{E})$ is connected but is not a bipartite graph, with $|\mathcal{V}|\geq 2$. Then, in particular, $\textsc{Ncond}(G)\neq \emptyset$ (cf. Proposition \ref{prop:ncond}). Our product form result, Theorem~\ref{thm:FCFM}, follows from a reversibility scheme that generalizes to the case of multigraphs, the one constructed in \cite{MBM17}. In fact, we propose a proof that is simpler, at some points, than the one in \cite{MBM17}. We reproduce hereafter the main steps of this construction for easy reference, and only develop exhaustively the points that are specific to the present context, or based on different arguments. 

Hereafter, we denote by $\P_W$, the transition operator of the buffer-content Markov chain, that is, for all $w,w' \in\mathbb W,$ 
we write $P_W(w,w')=\pr{W_{n+1}=w' \mid W_n =w}, $ for any $n\in \N$. 


\subsection{Two auxiliary chains}
\label{subsec:aux}
As in section 3.2 of \cite{MBM17}, we first need to define two auxiliary Markov chains. For this, let us denote by $\oV$ an independent copy of $\V$, i.e. a set with the same cardinal formed with copies of elements of $\V$. We set $\bV\isdef \V\cup\oV,$ and we define, for $\mathbf{w}\in\bV^*$,
\begin{align*}
&\V(\w )\isdef \{a\in\V : |\w |_a>0\},\\
&\oV(\w )\isdef \{\overline{a}\in\oV : |\w |_{\overline{a}}>0\}.
\end{align*}
For $a\in\bV,$ we will use the notation $\overline{\overline{a}}=a.$ 
\begin{definition} We define the \emph{backward detailed chain} as the process $(B_n)_{n\in\N}$ with values in $\bV^*$ given by 
$B_0=\varepsilon$ and, for any $n\geq 1$,
\begin{itemize}
\item if $W_n=\varepsilon$ (i.e. all the items arrived up to time $n$ are matched at time $n$), then $B_n=\varepsilon$,
\item otherwise, let $i(n)\in[\![1,n]\!]$ be the arrival time of the oldest item still in the buffer, then, the word $B_n$ is the word of length $n-i(n)+1$, defined, for any $\ell \in \ll 1,n-i(n)+1 \rr$, by
\[(B_n)_\ell=\left\{\begin{array}{ll}
V_{i(n)+\ell-1} \,\, &\mbox{if $V_{i(n)+\ell-1}$ has not been matched up to time $n$};\\
\td{V_{k}}\,\, &\mbox{if $V_{i(n)+\ell-1}$ is matched at or before time $n$, with item $V_k$}\\
               &\mbox{(where $1 \leq k \le n$)}.\\
\end{array}\right.\] 

\end{itemize}
\end{definition}

In other words, 
the word $B_n$ gathers the class indexes of all unmatched items entered up to $n$, at the places corresponding to their arrival times, 
and the copies of the class  
indexes of the items matched before $n$, but after the arrival of the oldest unmatched item at $n$, at the place corresponding to the arrival time of 
their respective match. 


Observe that by construction of $(B_n)_{n\in\N}$, for all $n\in\N$, the word $B_n$ necessarily contains all the letters of $W_n$. More precisely, for any $n\in\mathbb{N}$, $W_n$ is the restriction of the word $B_n$ to its letters in $\V$. 
Furthermore, $(B_n)_{n\in\N}$ is also a Markov chain, since for any $n\geq 0$, the value of $B_{n+1}$ can be deduced from that of $B_n$ and from the class $V_{n+1}$ of the item entered at time $n+1$.  

A state $\w \in \bV^*$ is said to be \emph{admissible for $(B_n)_{n\in\N}$} if it can be reached by the chain $(B_n)_{n\in\N}$, under the $\fcfm$ policy. We set
$$\mathbb{B}\isdef \{\w \in\bV^* : \w  \; \text{is admissible for} \; (B_n)_{n\in\N}\}.$$
The following result can be proven exactly as Lemma 1 in \cite{MBM17}.
\begin{lemma}\label{lem:ADM_B}
Let $\w =\w _1\dots \w _q \in \bV^*.$ Then, $\w\in\mathbb{B}$ if and only if $\w_1\in\V$ and for $1\leq i<j\leq q$,
\begin{itemize}
\item if $(\w_i,\w_j)\in \V^2$, then $\w_i\pv\w_j$,
\item if $(\w_i,\w_j)\in \V\times\oV$, then $\w_i\pv{\overline \w_j}$.
\end{itemize}
\end{lemma}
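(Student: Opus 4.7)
\medskip

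\noindent\textbf{Plan.} I would prove the two implications separately, following closely the scheme of Lemma~1 in \cite{MBM17} and merely taking care of the extra flexibility afforded by self-loops.

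For necessity, suppose $\w\in\mathbb{B}$, so $B_n=\w_1\ldots\w_q$ for some realization of the dynamics, with window $[i(n),n]$. The letter $\w_1$ codes the class of the arrival at time $i(n)$, which by definition is the oldest item still unmatched at time $n$, forcing $\w_1\in\V$. The two pair conditions are then established by contradiction. In the $(\V,\V)$ case, if $\w_i\v\w_j$ for $i<j$, then at time $i(n)+j-1$ the buffer already contains the $\w_i$-item, which is older and compatible with the newly arrived $\w_j$-item; under \textsc{fcfm} the arrival would then be matched with the oldest compatible item, contradicting $\w_j\in\V$ at time $n$. In the $(\V,\oV)$ case, let $V_k$ be the matching partner of $V_{i(n)+j-1}$, so that $V_k$ has class $a=\overline{\w_j}$. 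I would carry out the case analysis on the position of $k$ (with respect to $i(n)$, to $i(n)+i-1$, and to $i(n)+j-1$): assuming $\w_i\v a$, in each case either the older $\w_i$-item would have found $V_k$ (or some other compatible item) in the buffer upon its arrival and thus been matched, or $V_k$ itself would have matched with the older $\w_i$-item instead of with $V_{i(n)+j-1}$, again contradicting \textsc{fcfm}.

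For sufficiency, the plan is induction on $q=|\w|$, the case $q=0$ being trivial with $n=0$. If $\w_q\in\V$, the sub-word $\w'=\w_1\ldots\w_{q-1}$ still satisfies the conditions and, by the inductive hypothesis, admits a history $V_1,\ldots,V_n$ with $B_n=\w'$; the buffer at time $n$ coincides with the restriction of $\w'$ to $\V$, and by the $(\V,\V)$ admissibility condition $\w_q$ is incompatible with each of its letters, so appending $V_{n+1}=\w_q$ yields $B_{n+1}=\w$. If $\w_q=\overline{a}$, the idea is to modify the inductive history by inserting an auxiliary ``pre-window'' arrival of class $a$ before the arrivals producing $\w'$: by the $(\V,\oV)$ condition this class $a$ is incompatible with every $\V$-letter of $\w'$, hence it survives in the buffer throughout the processing of $\w'$ and can subsequently be consumed by one final arrival. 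Taking that final arrival of class $a$ when $a\in\V_1$ (using the self-loop), or of an appropriate neighbor of $a$ when $a\in\V_2$, produces exactly the letter $\overline{a}$ at position $q$ without disturbing the letters at positions $1,\ldots,q-1$.

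The main obstacle is the $\oV$ case of the sufficiency direction when $\w$ contains several $\oV$-letters whose match classes may be pairwise compatible: a na\"ive simultaneous prepending of one pre-window arrival per $\oV$-letter can cause these auxiliary items to match each other and vanish before meeting their intended window partners. The resolution, identical in spirit to \cite{MBM17}, is to combine pre-window matches with \emph{within-window} pairings of $\oV$ positions, using the admissibility conditions (together with $\w_1\in\V$, which keeps the oldest unmatched item fixed throughout) to guarantee that each intended pairing is indeed the one selected by \textsc{fcfm}; the presence of self-loops in the multigraph setting only simplifies some sub-cases, so that the argument of the graph version extends without any new idea.
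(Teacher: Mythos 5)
Your plan follows the same route as the paper, which gives no argument of its own and simply invokes Lemma~1 of \cite{MBM17}; your necessity half is correct and essentially complete: $\w_1\in\V$ because position $1$ is by definition the oldest unmatched item, the $(\V,\V)$ case is immediate, and your case analysis on the arrival time of the partner $V_k$ in the $(\V,\oV)$ case is the standard \fcfm{} contradiction.

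The sufficiency half, however, contains a genuine gap. In the inductive step for $\w_q=\overline a$, you justify the survival of the prepended class-$a$ item solely by the fact that $a$ is incompatible with every $\V$-letter of $\w'$. That is not enough: the realizing history supplied by the induction hypothesis also contains the arrivals occupying the $\oV$-positions of $\w'$, their matching partners, and possibly further pre-window arrivals, and nothing in the admissibility conditions prevents those classes from being neighbors of $a$. Since the prepended $a$-item is the oldest item in the whole system, the first such compatible arrival would be matched with it under \fcfm{}, which simultaneously consumes your auxiliary item and alters the corresponding letter of $\w'$ (it becomes $\overline a$ instead of the intended copy). Your closing paragraph acknowledges only the distinct issue of several auxiliary pre-window items matching one another, and resolves it by deferring to \cite{MBM17}; but that deferred construction---deciding for each $\oV$-position whether its partner is pre-window or in-window, choosing the classes of the arrivals at those positions, and ordering the pre-window arrivals so that \fcfm{} realizes exactly the intended pairings---is the actual content of the sufficiency proof, and it does not reduce to the ``prepend one item to an arbitrary inductive history'' scheme you describe. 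To turn the plan into a proof you must either carry out this explicit scenario construction (the only multigraph novelty being that for $a\in\maV_1$ the consuming arrival may be taken of class $a$ itself), or strengthen the induction hypothesis so as to control which classes occur in the realizing history; as written, the hard direction is asserted rather than proved.
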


As a consequence of Lemma~\ref{lem:ADM_B}, any word $\w \in \mathbb{B}$ can be written as
$$
\w=b_1\overline{a_{11}}\overline{a_{12}}\dots\overline{a_{1k_1}}b_2\overline{a_{21}}\overline{a_{22}}\dots \overline{a_{2k_2}} b_3 \dots b_q \overline{a_{q1}} \dots \overline{a_{qk_q}},$$
where $q, k_1,\ldots, k_q\in\N,$ $b_1,\ldots,b_q\in\V$, $a_{ij}\in\V$ for $1\leq i\leq q, 1\leq j\leq k_i$, and
\[\left\{\begin{array}{ll}
\{ b_1, \dots, b_q \}=\V(\w )\in\mathbb{I}\left(\check G\right),\\
\forall i\in\ll 1,q\rr  , \; b_i\in\S \; \Rightarrow \left[\forall j\neq i, \; b_i\neq b_j\right], \\
\forall i \in \ll 1,q\rr  , \; \forall j\in \ll 1,k_i\rr  , \; a_{ij}\in\E(\{b_1,\dots,b_i\})^c.
\end{array}\right.\]
The transition operator of the chain $\suite{B_n}$ is denoted by $\P_B$, that is, for all $\bw,\bw' \in \mathbb B$, we write 
$\P_B(\bw,\bw')=\pr{B_{n+1}=\bw' \mid B_n=\bw}$, for all $n\in \N$. 

\begin{definition}
We define the \emph{forward detailed chain} as the process $(F_n)_{n\in\N}$ with values in $\bV^*$ given by $F_0=\varepsilon$ (the empty word) and, for any $n\geq 1$,
\begin{itemize}
\item if $W_n=\varepsilon$ (i.e. all the items arrived up to time $n$ are matched at time $n$), then $F_n=\varepsilon$,
\item otherwise, let $\mathscr U_n$ be the set of items arrived before time $n$ that are not matched at time $n$ (note that $\mathscr U_n$ is non-empty, since $W_n \neq \varepsilon$). Also, set $$j(n)\isdef\sup\left\{ m \geq n+1 : V_m \mbox{ is matched with an element of } \mathscr U_n\right\}.$$
Observe that $j(n)$ is possibly infinite. Then, 
if $j(n)$ is finite, $F_n$ is the word of $\bV^*$ of length $j(n)-n$ (respectively of $A^{\N}$ of length $+\infty$, if $j(n)=+\infty$), 
such that for any $\ell \in \ll 1,j(n)-n \rr$ (respectively $\ell \in \N_+$), 
      \[(F_n)_\ell=\left\{\begin{array}{ll}
                         V_{n+\ell} \,\, &\mbox{if $V_{n+\ell}$ is not matched with an item arrived up to $n$};\\
                        \td{V_{k}}\,\, &\mbox{if $V_{n+\ell}$ is matched with item $V_k$, where $1 \leq k \le n$}.
                        \end{array}\right.\] 
\end{itemize}
\end{definition}


In other words, the word $F_n$ contains the copies of all the class indexes of the items entered up to time $n$ and matched after $n$, at the place corresponding to the arrival time of their respective match, 
together with the class indexes of all items entered after $n$ and before the last item matched with an item entered up to $n$, and not matched with an element entered before $n$, if any, at the place corresponding to their arrival time.  
Similarly to \cite{MBM17}, we make the three following simple observations: 
\begin{itemize}
\item If $F_n \in \bV^*$ is finite, then $(F_n)_{j(n)-n} \in \td\maV$;
\item $\suite{F_n}$ is a Markov chain; 
\item If $F_n$ is a.s. an element of $\mathbf V^*$ for all $n\in\N$.
\end{itemize} 


As for the backward chain, we say that a state $\w \in \bV^*$ is \emph{admissible for $(F_n)_{n\in\N}$} if it can be reached by the chain $(F_n)_{n\in\N}$, under the $\fcfm$ policy. Then, we set 
$$\mathbb{F}\isdef \{\w \in\bV^* : \w  \; \text{is admissible for} \suite{F_n}$$
and we denote by $\P_F$ the transition operator of the chain $\suite{F_n}$ on $\mathbb F$.  
For any word $\w =\w _1\dots\w _n\in\bV^*$, let us define its reversed-copy by $\overleftarrow{\overline{\w }}\isdef \overline{\w _n}\dots \overline{\w _1}\in\bV^*.$ Note that the map 
$\Psi: \bV^* \to \bV^*, \w \mapsto \overleftarrow{\overline{\w}}$ satisfies $\Psi\circ\Psi = Id_{\bV^*}$. Thus, $\Psi$ is a bijection and its inverse function is $\Psi^{-1}=\Psi$. 

\begin{lemma}\label{lem:ADMB_ADMF_iso}
The map 
\[\Phi:\begin{cases} {\mathbb B} \longrightarrow {\mathbb F}\\
 \w \longmapsto \Psi(\w)=\overleftarrow{\overline{\w}}\end{cases}\]
  is well-defined and bijective.
\end{lemma}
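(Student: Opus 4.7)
The plan is to derive a characterization of $\mathbb{F}$ dual to Lemma~\ref{lem:ADM_B}, and then to check that the involution $\Psi$ already intertwines the two sets of constraints. Specifically, I would aim to prove the following companion statement: $\mathbf{u}=u_1\ldots u_q \in \mathbb{F}$ if and only if $u_q \in \oV$ and, for all $1\leq i<j\leq q$, one has $\overline{u_i}\pv\overline{u_j}$ whenever $(u_i,u_j) \in \oV^2$, and $u_i\pv\overline{u_j}$ whenever $(u_i,u_j) \in \V\times\oV$. This is the natural analogue of Lemma~\ref{lem:ADM_B}, with $\V$ and $\oV$ swapped and ``left-to-right'' replaced by ``right-to-left'', which is what one should expect: $B_n$ encodes the history from the arrival of the oldest unmatched item up to time $n$, whereas $F_n$ encodes the history from time $n+1$ up to the last match of an item of $\mathscr{U}_n$.

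Once this characterization is in hand, the rest is a matter of elementary index manipulation. Writing $\mathbf{u}=\Psi(\mathbf{w})=\overleftarrow{\overline{\mathbf{w}}}$ gives $u_i=\overline{w_{q-i+1}}$ for every $i$; the first-letter condition $w_1\in\V$ on $\mathbf{w}$ becomes the last-letter condition $u_q\in\oV$ on $\mathbf{u}$, and setting $i'=q-j+1$, $j'=q-i+1$ translates each pairwise constraint of Lemma~\ref{lem:ADM_B} into exactly one of the constraints of the $\mathbb{F}$-characterization, and conversely. Hence $\Psi(\mathbb{B})\subset \mathbb{F}$ and $\Psi(\mathbb{F})\subset\mathbb{B}$. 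Combined with the already-noted identity $\Psi\circ\Psi=\mathrm{Id}_{\bV^*}$, this shows that $\Phi$ is well-defined and is a bijection $\mathbb{B}\to\mathbb{F}$ whose inverse is the restriction of $\Psi$ to $\mathbb{F}$.

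The main obstacle is the proof of the $\mathbb{F}$-characterization itself. In contrast with $B_n$, the word $F_n$ can a priori be infinite, its length $j(n)-n$ being random and possibly equal to $+\infty$, and its admissibility depends on future arrivals and on the identity of the items of $\mathscr{U}_n$, rather than on past ones. Necessity of the constraints follows from the FCFM rule applied to the items corresponding to positions $i$ and $j$ of $\mathbf{u}$: if both are destined to be matched with items of $\mathscr{U}_n$ (case $\oV^2$), then at the earlier of the two arrival times of their partners in $\mathscr{U}_n$ the FCFM rule would already have paired these partners had they been compatible; similarly, in the mixed case $\V\times\oV$, the arrival at position $i$ (which is not matched with any item of $\mathscr{U}_n$) cannot be compatible with the partner in $\mathscr{U}_n$ appearing at position $j$, as otherwise FCFM would have paired them at time $n+i$. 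Sufficiency is obtained by explicitly realizing any constrained word as a genuine $F_n$: one fixes $\mathscr{U}_n$ with classes prescribed by the $\oV$-letters of $\mathbf{u}$, and reads off the arrivals at times $n+1,\ldots,n+q$ from the $\V$-letters. The self-loops of $G$ do not introduce additional complications here, since the compatibility relation $\v$ is used only through the stated incompatibility conditions, and the extra constraint on nodes of $\S$ in the definition of $\mathbb{W}$ is automatically preserved by the construction.
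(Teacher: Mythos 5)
Your proposal is correct and follows essentially the paper's own route: the proof in the text consists precisely in establishing that $\w\in\mathbb F$ if and only if $\Psi(\w)\in\mathbb B$ (by the argument of Lemma 2 of \cite{MBM17}) and then concluding from $\Psi\circ\Psi=\mathrm{Id}_{\bV^*}$, which is exactly what your dual characterization of $\mathbb F$ together with the index translation amounts to, and your necessity argument via the FCFM rule is the right one. The only point to tighten is the realization (sufficiency) step: at the positions carrying $\oV$-letters the classes of the arriving items are \emph{not} letters of the word and must be chosen as compatible partners, with the items of $\mathscr U_n$ placed in the buffer in the order of their prescribed matching times, so that FCFM indeed matches the arrival at time $n+j$ with the intended element of $\mathscr U_n$ and not with another one --- a routine but necessary detail, which your phrase ``reads off the arrivals from the $\V$-letters'' glosses over.
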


\begin{proof}
Exactly as in Lemma 2 in \cite{MBM17}, it can be proven that $\bw\in\mathbb F$ if and only if $\Psi(\bw)\in \mathbb B$. 
This guarantees that the mapping $\Phi$ is well-defined and surjective. It is injective because $\Psi$ clearly is so. 
\end{proof}

Let us define a measure $\nu$ on $\bV^*$ by $\nu(\varepsilon)\isdef 1$ and
\begin{equation}
\label{eq:defnu}
\forall \w \in \bV^*\setminus\{\varepsilon\}, \; \nu(\w ) \isdef \prod\limits_{i=1}^{|\V|} \mu(i)^{|\w |_i + |\overline{\w}|_i}.
\end{equation}

We can use the measure $\nu$ defined above to establish the following link between the dynamics of the chains $(B_n)_{n\in\N}$ and $(F_n)_{n\in\N}$. 
The following result can be established exactly as Lemma 3 in \cite{MBM17},


\begin{proposition}\label{prop:lienB_nF_n} For any $(\w ,\mathbf{w'})\in \mathbb{B}^2$, 
we have that 
\begin{equation*}\label{eq:lienB_nF_n}
\nu(\w )\P_B(\w,\w') = \nu \left(\overleftarrow{\overline{\mathbf{w'}}}\right) \P_F\left(\overleftarrow{\overline{\w' }},\overleftarrow{\overline{\mathbf{w}}}\right).
\end{equation*}
\end{proposition}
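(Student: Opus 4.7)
The plan is to verify the identity by a case analysis of the non-zero transitions $\w\to\w'$ of the backward chain $(B_n)$. As a preliminary simplification, note that $\nu$ is invariant under the involution $\Psi:\w\mapsto\overleftarrow{\overline{\w}}$, since both reversal and bar-conjugation preserve the multiset of letters of $\w$ viewed under the projection $\pi:\bV\to\maV$, $\overline{i}\mapsto i$. Hence $\nu(\Psi(\w'))=\nu(\w')$, and the identity reduces to
\begin{equation*}
\nu(\w)\,\P_B(\w,\w') \;=\; \nu(\w')\,\P_F\bigl(\Psi(\w'),\Psi(\w)\bigr).
\end{equation*}

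By Lemma~\ref{lem:ADM_B} and the \textsc{fcfm} rule, the non-zero backward transitions out of $\w=w_1\cdots w_q$ upon an arrival $v\in\maV$ fall into three types, each of which preserves the constraint $|\w|_i\leq 1$ for $i\in\maV_1$:
\begin{enumerate}
\item[(i)] \emph{Pure addition}: $v$ is incompatible with every $\maV$-letter of $\w$; then $\w'=\w v$ and $\P_B(\w,\w v)=\mu(v)$.
\item[(ii)] \emph{Internal match}: the oldest compatible $\maV$-letter $w_i$ of $\w$ has index $i\geq 2$; then $\w'$ is obtained from $\w$ by replacing $w_i$ with $\overline{v}$ and appending $\overline{w_i}$ at the right, with $\P_B(\w,\w')=\mu(v)$.
\item[(iii)] \emph{Head match}: $v$ is compatible with the head $w_1\in\maV$; then $\w'=w_k\cdots w_q\,\overline{w_1}$, where $k$ is the position of the next $\maV$-letter of $\w$ (with $\w'=\varepsilon$ if no such $k$ exists), and $\P_B(\w,\w')=\mu(\maE(w_1))$ since $\w'$ is independent of the particular $v\in\maE(w_1)$.
\end{enumerate}

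For each type, I would then identify the corresponding forward transition $\Psi(\w')\to\Psi(\w)$ and verify the balance. In cases~(i) and~(ii), $|\Psi(\w')|=|\Psi(\w)|+1$, so no horizon extension occurs ($j(n+1)=j(n)$) and the forward transition is deterministic given $\Psi(\w')$: $\P_F(\Psi(\w'),\Psi(\w))=1$. A direct computation from (\ref{eq:defnu}) yields $\nu(\w')/\nu(\w)=\mu(v)$ in both cases, whence the balance holds. In case~(iii), $\Psi(\w')=w_1\,\overline{w_q}\cdots\overline{w_k}$ begins with the unbarred $w_1$, signalling that a new unmatched item of class $w_1$ joins the forward buffer; the forward chain must then extend its horizon by $k-1$ units to produce the tail $a_{k-1}\cdots a_2\,\overline{w_1}$ of $\Psi(\w)$. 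These new arrivals contribute a product $\mu(a_{k-1})\cdots\mu(a_2)\,\mu(\maE(w_1))$ to $\P_F(\Psi(\w'),\Psi(\w))$, which combined with $\nu(\w)/\nu(\w')=\prod_{j=2}^{k-1}\mu(a_j)$ yields the required balance.

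The main obstacle is case~(iii): one must verify that the forward chain under \textsc{fcfm}, driven by the new arrivals past the horizon $j(n)$, actually produces $\Psi(\w)$ with no unintended intermediate matches, and that the corresponding probabilities multiply as claimed. The admissibility conditions of Lemma~\ref{lem:ADM_B}, notably the incompatibility between $w_1$ and every $a_j$ and every $w_j$ for $j\geq k$, ensure that the new arrivals $V_{j(n)+1},\ldots,V_{j(n+1)}$ produce the correct bar structure on the forward side. The adaptation from the graph setting of \cite{MBM17} requires only that the multigraph constraint $|\w|_i\leq 1$ for $i\in\maV_1$ be maintained by the dynamics, which is straightforward and does not generate new cases.
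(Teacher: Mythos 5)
Your overall route---first using the invariance of $\nu$ under $\Psi$, then running a case analysis over the three possible \textsc{fcfm} transitions of the backward chain (pure addition, internal match, head match) and matching each with the mirrored forward transition---is essentially the strategy behind the proof the paper invokes; note that the paper itself gives no argument here, it simply defers to Lemma 3 of \cite{MBM17}. Your backward-side computations are correct: the three cases are exhaustive, the transition probabilities $\mu(v)$, $\mu(v)$ and $\mu(\maE(w_1))$ are right, and the bookkeeping with $\nu$ in cases (i) and (ii) closes the balance as you state (in case (ii) the probability-one claim for the forward move also silently uses Lemma \ref{lem:ADM_B}, since $\Psi(\w')$ may contain unbarred letters other than $v$, but admissibility makes them incompatible with $w_i$, so the first compatible unbarred letter is indeed $v$).

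There is, however, a substantive missing ingredient exactly where you locate ``the main obstacle''. The admissibility conditions of Lemma \ref{lem:ADM_B} only settle the combinatorial half of case (iii): the letters $a_2,\dots,a_{k-1}$ are incompatible with $w_1$, so the new arrivals cannot capture the item of class $w_1$ prematurely and the letters inside the old window stay unchanged. They do \emph{not} justify the probabilistic half, namely that the extension contributes the product $\mu(a_{k-1})\cdots\mu(a_2)\,\mu(\maE(w_1))$ to $\P_F\bigl(\Psi(\w'),\Psi(\w)\bigr)$. For that you need that, conditionally on $\{F_n=\Psi(\w')\}$, the arrivals at times $j(n)+1,j(n)+2,\dots$ are i.i.d.\ with law $\mu$; this is not automatic, because $j(n)$ is defined as a supremum over the entire future, so a priori the event $\{F_n=\Psi(\w')\}$ constrains the post-horizon arrivals (they must never be matched with an element of $\mathscr U_n$, i.e.\ must avoid any pre-$n$ item still unmatched after $j(n)$). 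The gap is closed by proving that under \textsc{fcfm} with $\mu$ of full support every item is a.s.\ matched in finite time (every class has a neighbor, arrivals of that neighboring class occur infinitely often, and only the finitely many older items can absorb them); hence no pre-$n$ item survives past $j(n)$, $j(n)<\infty$ a.s., and $\{F_n=\Psi(\w')\}$ is measurable with respect to the arrivals up to the horizon, therefore independent of the later ones. The same fact underlies your probability-one claims in the edge cases of (i)--(ii), e.g.\ $\P_F(\overline{v},\varepsilon)=1$. Finally, since the identity is claimed for \emph{all} pairs of $\mathbb B^2$, you should also check the converse direction of the correspondence, so that $\P_B(\w,\w')=0$ indeed forces $\P_F\bigl(\Psi(\w'),\Psi(\w)\bigr)=0$.
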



\subsection{Positive recurrence of $(B_n)_{n\in\N}$ and $(F_n)_{n\in\N}$.}
We will exploit the local balance equations of Proposition \ref{prop:lienB_nF_n} to derive stationary distributions of these two Markov chains. 
To this end, the following technical lemma will simplify the proofs.

\begin{lemma}\label{lem:nu(A*)}
The measure $\nu$ defined by (\ref{eq:defnu}) satisfies the following properties:
\begin{enumerate}
\item For any $\A \subset \bV = \V\cup\oV$, we have $\nu(\A )=\mu(\V(\A )) + \mu\left(\overline{\oV(\A )}\right)$. 
\item For any $\A _1,\ldots, \A _n\subset \bV, \; \nu(\A _1\dots\A _n)=\nu(\A _1)\dots\nu(\A _n)$. 
In particular, $\nu(\A ^k)=\nu(\A )^k$.
\item If $\A \subset \bV$ is such that $\nu(\A )<1$, then $\nu(\A ^*)={1\over 1-\nu(\A )}$.
\end{enumerate}
\end{lemma}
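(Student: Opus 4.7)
The plan is to reduce everything to the multiplicativity of $\nu$ over concatenation at the single-letter level, and then lift to sets by distributivity. All three assertions then follow almost by inspection.

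First, I would verify the building-block identity: for any single letter $x\in\bV$ viewed as a one-letter word, (\ref{eq:defnu}) yields $\nu(a)=\mu(a)$ when $a\in\V$ and $\nu(\overline a)=\mu(a)$ when $\overline a\in\oV$, because $\overline a$ contributes to $|\overline{\mathbf w}|_a$ but not to $|\mathbf w|_a$, and conversely for $a$. Extending $\nu$ additively to subsets of $\bV$ (interpreted as unions of one-letter words), (1) is then merely the rewriting
\[
\nu(\A)=\sum_{x\in \A}\nu(x)=\sum_{a\in\V(\A)}\mu(a)+\sum_{\overline a\in\oV(\A)}\mu(a)=\mu(\V(\A))+\mu\bigl(\overline{\oV(\A)}\bigr).
\]

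For (2), the key observation is that for any word $\mathbf w=x_1x_2\cdots x_n\in\bV^*$, the exponent $|\mathbf w|_i+|\overline{\mathbf w}|_i$ appearing in (\ref{eq:defnu}) is additive over concatenation: it equals $\sum_{j=1}^n (|x_j|_i+|\overline{x_j}|_i)$. Consequently $\nu(\mathbf w)=\prod_{j=1}^n \nu(x_j)$. Extending $\nu$ additively to sets of words and applying distributivity,
\[
\nu(\A_1\cdots\A_n)=\sum_{x_1\in\A_1}\!\cdots\!\sum_{x_n\in\A_n}\prod_{j=1}^n \nu(x_j)=\prod_{j=1}^n\sum_{x_j\in\A_j}\nu(x_j)=\prod_{j=1}^n \nu(\A_j),
\]
and specializing to $\A_1=\cdots=\A_n=\A$ yields $\nu(\A^k)=\nu(\A)^k$.

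For (3), I would write $\A^*=\bigsqcup_{k\ge 0}\A^k$ with the $k=0$ term equal to $\{\varepsilon\}$ of $\nu$-mass $1$, apply (2) to each term, and sum the resulting geometric series under the hypothesis $\nu(\A)<1$:
\[
\nu(\A^*)=\sum_{k\ge 0}\nu(\A)^k=\frac{1}{1-\nu(\A)}.
\]
There is essentially no substantive obstacle here; the only care required is bookkeeping, namely treating $\nu$ consistently both as a weight on individual words and (additively) on sets of words, and handling the empty word as the identity of the concatenation monoid so that $\A^0=\{\varepsilon\}$ contributes the correct factor of $1$.
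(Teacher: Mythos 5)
Your argument is correct and follows essentially the same route as the paper's (very terse) proof: assertion (1) directly from the definition of $\nu$, assertion (2) from its multiplicative structure over concatenation, and assertion (3) by decomposing $\A^*=\cup_{k\in\N}\A^k$ and summing the geometric series. Your single-letter bookkeeping, with $\nu$ extended additively to sets of words and $\{\varepsilon\}$ contributing mass $1$, just makes explicit what the paper leaves implicit.
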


\begin{proof} The first point follows from the definition of $\nu$ and the second point is a direct consequence of its multiplicative structure. 
Regarding the third point, observe that $\A^*=\cup_{k\in\N} \A^k$, so that 
\[\nu(\A^*)=\sum_{k\in\N}\nu\left(\mathcal{A}^k\right)=\sum_{k\in\N}\nu(\mathcal{A})^k.\]
\end{proof}
\noindent We can now state the following result,
\begin{proposition}\label{prop:B_nF_n_rec_pos}
Suppose that $\mu\in\textsc{Ncond}(G)$. Then, 
 the chains $(B_n)_{n\in\N}$ and $(F_n)_{n\in\N}$ are positively recurrent and admit respectively the restrictions on $\mathbb B$ and on $\mathbb F$ 
 of $\nu$ (that is, $\nu_B(\w ) = \nu_F(\Phi(\w )) =\nu(\w)$, for any $\w\in\mathbb B$)  as unique stationary measure (up to a multiplicative constant), respectively on $\mathbb B$ and $\mathbb F$. 
\end{proposition}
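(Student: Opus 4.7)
The plan is to combine the local balance identity of Proposition~\ref{prop:lienB_nF_n} with the explicit structural description of $\mathbb{B}$ given by Lemma~\ref{lem:ADM_B} and the decomposition following it, in order to establish both the invariance and the finiteness of the total mass of $\nu$. I first observe that $\nu\circ\Psi=\nu$, because the map $\Psi$ preserves, letter by letter, the underlying $\maV$-class of the word. Fixing $\bw'\in\mathbb{B}$ and summing the identity of Proposition~\ref{prop:lienB_nF_n} over $\bw\in\mathbb{B}$, the right-hand side factorizes as $\nu(\Psi(\bw'))\sum_{\bw\in\mathbb{B}}\P_F(\Psi(\bw'),\Psi(\bw))$; by the bijection $\Psi:\mathbb{B}\to\mathbb{F}$ of Lemma~\ref{lem:ADMB_ADMF_iso}, the inner sum equals $\sum_{\bz\in\mathbb{F}}\P_F(\Psi(\bw'),\bz)=1$, yielding $\nu \P_B = \nu$ on $\mathbb{B}$. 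A dual summation (over $\bw'$ for a fixed $\bw$, after rewriting the identity in terms of $\Psi$) then gives the stationarity of $\nu|_{\mathbb{F}}$ for $\P_F$.

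Next I turn to the finiteness. Each $\bw\in\mathbb{B}\setminus\{\varepsilon\}$ is uniquely parameterized by the independent set $\mathcal{I}=\V(\bw)\in\mathbb{I}(\check G)$, the order $\sigma\in\mathfrak S_{|\mathcal{I}|}$ of first appearance of its elements, and, after each such first appearance, a ``filler'' word over the alphabet
\[A_i\isdef(E_i\cap\maV_2)\cup\overline{\maE(E_i)^c},\quad E_i\isdef\{e_{\sigma(1)},\ldots,e_{\sigma(i)}\},\]
its barred letters being those whose class lies outside $\maE(E_i)$, and its unbarred letters being repeats of the $\maV_2$-elements already seen (the $\maV_1$-elements cannot be repeated because of their self-loops). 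By Lemma~\ref{lem:nu(A*)}(1),
\[\nu(A_i)=\mu(E_i\cap\maV_2)+\mu(\maE(E_i)^c)=1-\left[\mu(\maE(E_i))-\mu(E_i\cap\maV_2)\right].\]
The assumption $\mu\in\textsc{Ncond}(G)$ guarantees $\nu(A_i)<1$ for every prefix $E_i$: if $E_i\cap\maV_2=\emptyset$, positivity follows from the full support of $\mu$ and the non-emptiness of $\maE(E_i)$; otherwise $E_i\cap\maV_2\in\I(G)$, and $\textsc{Ncond}(G)$ combined with the monotonicity $\maE(E_i\cap\maV_2)\subset\maE(E_i)$ yields $\mu(E_i\cap\maV_2)<\mu(\maE(E_i))$. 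Summing the geometric series via Lemma~\ref{lem:nu(A*)}(2,3) over all fillers, and then over $\sigma$ and $\mathcal{I}$, produces
\[\nu(\mathbb{B})=1+\sum_{\mathcal{I}\in\mathbb{I}(\check G)}\sum_{\sigma\in\mathfrak S_{|\mathcal{I}|}}\prod_{i=1}^{|\mathcal{I}|}\frac{\mu(e_{\sigma(i)})}{\mu(\maE(E_i))-\mu(E_i\cap\maV_2)}=\alpha^{-1}<\infty,\]
and $\nu(\mathbb{F})=\alpha^{-1}$ as well, by the bijection $\Psi$ together with $\nu\circ\Psi=\nu$.

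Irreducibility of both chains on $\mathbb{B}$ and $\mathbb{F}$ follows from the full support of $\mu$: every state is reachable from $\varepsilon$ by prescribing the arrival sequence it encodes, and conversely $\varepsilon$ is reached from any state by feeding compatible arrivals, using the connectedness and non-bipartiteness of $G$ (the latter being a consequence of $\textsc{Ncond}(G)\neq\emptyset$, cf.\ Proposition~\ref{prop:ncond}). Combined with the existence of the strictly positive invariant measure $\nu$ of finite total mass, this yields positive recurrence of $(B_n)_{n\in\N}$ and $(F_n)_{n\in\N}$, and uniqueness of $\nu|_{\mathbb{B}}$ and $\nu|_{\mathbb{F}}$ up to a multiplicative constant. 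The main technical step in this program is the correct identification of the filler alphabet $A_i$: it is precisely the distinction between $\maV_2$-elements (repeatable as unbarred letters) and $\maV_1$-elements (non-repeatable due to their self-loops) that produces the correction $-\mu(E_i\cap\maV_2)$ in the denominator of $\alpha^{-1}$, which is the sole substantive novelty with respect to the proof of \cite{MBM17} in the graph case.
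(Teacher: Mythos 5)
Your proposal is correct and follows essentially the same route as the paper's proof: stationarity of $\nu$ via the local balance relation of Proposition~\ref{prop:lienB_nF_n} together with the bijection $\Psi$ and the invariance $\nu\circ\Psi=\nu$, finiteness via the decomposition of $\mathbb B\setminus\{\varepsilon\}$ into the sets indexed by $\maI\in\mathbb I\left(\check G\right)$ and $\sigma\in\mathfrak S_{|\maI|}$ with exactly the same filler alphabet $\left(E_i\cap\maV_2\right)\cup\overline{\maE(E_i)^c}$ and the same use of $\textsc{Ncond}(G)$ to get $\nu(A_i)<1$, and finally irreducibility to conclude positive recurrence and uniqueness. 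Your explicit treatment of the case $E_i\cap\maV_2=\emptyset$ is a small point of extra care compared with the paper, and your irreducibility argument (in particular for the forward chain) is more compressed than the paper's, but neither difference is substantive.
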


\begin{proof}
Let $\mu\in\textsc{Ncond}(G)$.\\
\underline{Step 1}: we first prove that $\nu_B$ is a stationary measure for the chain $(B_n)_{n\in\N}.$\\
For this, let us fix $\mathbf{w'}\in \mathbb{B}$. Then we have that 
\begin{align*}
\displaystyle\sum\limits_{\w \in \mathbb{B}} \frac{\P_B(\w,\w')\nu_B(\w )}{\nu_B(\mathbf{w'})}
&= \sum\limits_{\w \in \mathbb{B}}
\frac{\P_F\left(\overleftarrow{\overline{\w'}},\overleftarrow{\overline{\mathbf{w}}}\right)\nu_B \left(\overleftarrow{\overline{\mathbf{w'}}}\right)}{\nu_B(\mathbf{w'})}\\
&= \sum\limits_{\w \in \mathbb{B}}
\P_F\left(\overleftarrow{\overline{\w'}},\overleftarrow{\overline{\mathbf{w}}}\right)= 1,
\end{align*}
where the first equality follows from Proposition ~\ref{prop:lienB_nF_n}, 
the second from the fact that 
$\nu_B \left(\overleftarrow{\overline{\mathbf{w'}}}\right)=\nu_B(\mathbf{w'})$ and the last, from Lemma~\ref{lem:ADMB_ADMF_iso}.  
Thus, for all $\bw'\in\mathbb B$, we have that \[\nu_B(\mathbf{w'})=\sum_{\w \in \mathbb{B}} \P_B (\mathbf{w},\w')\nu_B(\w ),\]
which means exactly that $\nu_B$ is a stationary measure for the chain $(B_n)_{n\in\N}$.

\bigskip

\noindent
\underline{Step 2}: we now prove that $\nu_B(\mathbb{B})<\infty$.

By Lemma \ref{lem:ADM_B}, we know that 
$\w\in\mathbb B\setminus\{\varepsilon\}$ if and only if $\w$ belongs to a set 
\begin{multline}
b_1\,\A^*_1\,b_2\,\A^*_2\dots b_q\,\A^*_q\\
\isdef \left\{w\in \mathbf V^* : w=b_1w^1b_2w^2\dots b_qw^q;\,w^i\in \A^*_i\mbox{, for all } i \in \llbracket 1,q \rrbracket \right\},\quad q\ge 1,
\label{eq:defwords}
\end{multline}
where $b_1,\ldots, b_q$ are elements of 
$\V$ such that $\{ b_1, \dots, b_q \}\in\mathbb{I}\left(\check G\right)$ and such that for all distinct $i, j$ in $\llbracket 1,q \rrbracket$, 
$b_i\in\S$ implies that $ b_i\neq b_j$, and where we denote $$\A_i\isdef\overline{\E(\{b_1,\dots,b_i\})^c},\quad i\in\llbracket 1,q \rrbracket.$$

Equivalently, by highlighting only the first occurence of each letter of $\V$ appearing in $\w$ and employing a similar notation to (\ref{eq:defwords}) 
we obtain that $\w\in\mathbb B\setminus\{\varepsilon\}$ if and only if $\w$ belongs to some set of the form 
$$\C_{\maI,\sigma}\isdef e_{\sigma(1)}\,\B^*_{\sigma(1)}\,e_{\sigma(2)}\,\B^*_{\sigma(2)}\dots e_{\sigma(|\maI|)}\,\B^*_{\sigma(|\maI|)},$$ 
where $\maI = \left\{e_1,...,e_{|\maI|}\right\}\in\mathbb{I}\left(\check G\right)$, 
$\sigma \in\mathfrak{S}_{|\maI|}$, 
and where we denote \[\B_{\sigma(i)}\isdef\overline{\E(\{e_{\sigma(1)},\dots,e_{\sigma(i)}\})^c}\,\cup\, (\{e_{\sigma(1)},\ldots,e_{\sigma(i)}\}\cap \mathcal{V}_2),\quad i\in\llbracket 1,|\maI| \rrbracket.\]
In view of assertion (1) of Lemma~\ref{lem:nu(A*)}, we have that for all $i\in\llbracket 1,k \rrbracket$, 
\begin{align*}
\nu_B(\B_{\sigma(i)})&=\mu(\E(\{e_{\sigma(1)},\dots,e_{\sigma(i)}\}^c)+\mu(\{e_{\sigma(1)},\dots,e_{\sigma(i)}\}\cap \mathcal{V}_2)\\
&=1-\mu(\E(\{e_{\sigma(1)},\dots,e_{\sigma(i)}\}))+\mu(\{e_{\sigma(1)},\dots,e_{\sigma(i)}\}\cap \mathcal{V}_2).
\end{align*}
Since $\{e_{\sigma(1)},\dots,e_{\sigma(i)}\}\in\mathbb{I}(\check G)$, we have, by definition, that 
$\{e_{\sigma(1)},\dots,e_{\sigma(i)}\}\cap\mathcal{V}_2\in\mathbb{I}(G)$ and since the measure $\mu$ satisfies $\textsc{Ncond}(G)$, it follows that 
\begin{align*}
\mu\left(\{e_{\sigma(1)},\dots,e_{\sigma(i)}\}\cap\mathcal{V}_2\right) &< \mu\left(\maE\left(\{e_{\sigma(1)},\dots,e_{\sigma(i)}\}\cap\mathcal{V}_2\right)\right)\\
&\leq \mu\left(\maE\left(\{e_{\sigma(1)},\dots,e_{\sigma(i)}\}\right)\right)\end{align*}
and thereby, that 
$\nu_B(\mathcal{B}_i)<1$. As a conclusion, applying successively all assertions {of} Lemma~\ref{lem:nu(A*)}, we obtain that for all such $\maI$ and $\sigma$, 
\begin{equation*}
\nu_B(\C_{\maI,\sigma})=\prod\limits_{i=1}^{|\mathcal{I}|} \frac{\mu(e_{\sigma(i)})}{\mu(\mathcal{E}(\{e_{\sigma(1)},\dots,e_{\sigma(i)}\}))-\mu(\{e_{\sigma(1)},\dots,e_{\sigma(i)}\}\cap\maV_2)}.
\end{equation*}

The set $\mathbb B$ is the disjoint union of the sets $\C_{\maI,\sigma}$, for $\maI$ in the finite set $\mathbb I\left(\check G\right)$, and $\sigma$ 
in the finite set $\mathfrak{S}_{|\maI|}$.  
It follows that $\nu_B(\mathbb{B})$ is finite, and given by 
\begin{align}
\nu_B(\mathbb B)&=\nu_B(\varepsilon) + \sum\limits_{\mathcal{I}\in\mathbb{I}\left(\check G\right)}\sum\limits_{\sigma\in\mathfrak{S}_{|\mathcal{I}|}} 
\nu_B(\C_{\maI,\sigma})\nonumber\\
&=1+ \sum\limits_{\mathcal{I}\in\mathbb{I}\left(\check G\right)}\sum\limits_{\sigma\in\mathfrak{S}_{|\mathcal{I}|}}\prod\limits_{i=1}^{|\mathcal{I}|} \frac{\mu(e_{\sigma(i)})}{\mu(\mathcal{E}(\{e_{\sigma(1)},\dots,e_{\sigma(i)}\}))-\mu(\{e_{\sigma(1)},\dots,e_{\sigma(i)}\}\cap\maV_2)}.\label{eq:defalpha0}
\end{align}

\bigskip

\noindent
\underline{Step 3}: we conclude with the positive recurrence of the two chains. 

By the results above, the chain $(B_n)_{n\in\N}$ has a stationary probability distribution on $\mathbb{B}$, which is given by the measure $\nu_B$ normalized by $\nu_B(\mathbb{B})$. 

Observe that the chain is irreducible on $\mathbb{B}$. To see this, let $\bw\in\mathbb B$ and first observe that the empty word $\varepsilon$ leads to $\bw$ with positive probability for the transitions of $\suite{B_n}$ 
(this is the constructive argument proving Lemma \ref{lem:ADM_B} - see the proof of Lemma 1 in \cite{MBM17}). Conversely, denoting by $b_1,\dots,b_q$ the elements of $\maV(\bw)$, it is easy to see that the word $\bw$ leads to the empty word with positive probability for the transitions of $\suite{B_n}$ : 
indeed, by the definition of the policy {\sc fcfm}, if the chain is in the state $\bw$, then it will reach the empty state after exactly $q$ steps, 
by seeing the successive arrivals of $q$ elements of respective classes in $\maE(b_1)$, $\maE(b_2), \dots , \maE(b_q)$, which concludes the proof 
of irreducibility. 

It then follows that the chain $\suite{B_n}$ is positively recurrent on $\mathbb{B}$ and that its stationary probability distribution is unique. Consequently, $\nu_B$ is the unique stationary measure (up to a multiplicative constant) of the chain $(B_n)_{n\in\N}$.

Now, 
as in step 1, we obtain that for all $\w'\in \mathbb B$, 
\begin{equation*}
\nu_F(\Phi(\mathbf{w'}))=\sum\limits_{\w \in \mathbb{B}} \P_F (\Phi(\mathbf{w}),\Phi(\w' ))\nu_F(\Phi(\w )).
\end{equation*}
Using Lemma~\ref{lem:ADMB_ADMF_iso}, we deduce that $\nu_F$ is a stationary measure for the chain $(F_n)_{n\in\N}$. 
Then, step 2 shows equivalently that $\nu_F(\mathbb{F}) <\infty$. 
So, the chain $(F_n)_{n\in\N}$ has a stationary probability distribution on $\mathbb{F}$, which is given by the measure $\nu_F$ normalized by $\nu_F(\mathbb{F})$. 

Similarly as above, we can check that the chain $\suite{F_n}$ is irreducible on $\mathbb{F}$. First, the empty word leads with positive probability 
to any element $\bw\in\mathbb F$, as can be checked using the same constructive argument as in the proof of Lemma 2 
Conversely, suppose that the chain $\suite{F_n}$ is at time $n$ in a state 
\[\bw = a_{qk_q} \dots a_{q1}\td{a_q}\dots\td{a_3} a_{2k_2}\dots a_{21}\td{a_2}a_{1k_1}\dots a_{11} \td{a_1}\in\mathbb F\]
and let $r\isdef q+\sum_{i=1}^q k_i$ be the length of $\bw$. Then, going forward in time, perform the {\sc fcfm}-matching of 
the `unmatched' elements of respective classes in $\maV(\bw)$. Say there remains in the system, at time $n+r$, $\ell$ unmatched elements denoted $c_1,c_2,\dots,c_\ell$ in their order of arrivals. Then, the chain can return to the empty state in particular if the first $\ell$ arrivals 
after time $n+r$ (excluded) are of respective classes in $\maE(c_1)$, $\maE(c_2),\dots ,\maE(c_\ell)$. This concludes the proof of irreducibility. 

As a consequence, the chain $\suite{F_n}$  is positively recurrent on $\mathbb{F}$ and its stationary probability distribution is unique. 
Consequently, $\nu_F$ is the unique stationary measure (up to a multiplicative constant) of the chain $(F_n)_{n\in\N}$, which concludes the proof. 
\end{proof}

\subsection{Positive recurrence of $\suite{W_n}$.} 
The Markov chain $(W_n)_{n\in\N}$ can be seen as the projection of the chain $(B_n)_{n\in\N}$ on $\V^*$. In order to obtain the stationary probability distribution of $(W_n)_{n\in\N}$ from the one of $(B_n)_{n\in\N}$, we will use the following lemma:
\begin{lemma}\label{lem:projCM} Let $\P_Y$ and $\P_{Y'}$ be the transition matrices of two homogeneous 
Markov chains $\suite{Y_n}$ and $\suite{Y'_n}$ with values in some countable sets $S$ and $S'$ respectively, and consider a map 
$p:S \to S'$ satisfying 
\begin{equation*}
\forall a',b'\in S', \; \forall a\in p^{-1}(\{a'\}), \P_Y(a,p^{-1}(\{b'\})) = \P_{Y'}(a',b').
\end{equation*} 
Then, if a measure $\mu$ is invariant for $\P_Y$, the measure $\mu'$ defined by $\mu'(a')\isdef \mu(p^{-1}(\{a'\}))$ for all $a'\in S'$, 
is an invariant measure for $\P_{Y'}$ on $S'$.
\end{lemma}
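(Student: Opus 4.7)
The plan is to carry out a direct lumpability/projection computation, verifying the invariance of $\mu'$ under $\P_{Y'}$ by starting from its definition, invoking the invariance of $\mu$ under $\P_Y$, and then using the hypothesis on $p$ to regroup the summation along the fibers of $p$. Since everything is countable and non-negative, interchanging sums will be unproblematic (Tonelli), so the work is purely combinatorial bookkeeping.

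More precisely, I would fix $b'\in S'$ and compute
\begin{align*}
\mu'(b') &= \mu\left(p^{-1}(\{b'\})\right) = \sum_{b\in p^{-1}(\{b'\})}\mu(b)\\
&= \sum_{b\in p^{-1}(\{b'\})}\sum_{a\in S}\mu(a)\,\P_Y(a,b),
\end{align*}
using the $\P_Y$-invariance of $\mu$ at each $b$. Exchanging the sums and recognizing the inner sum as $\P_Y(a,p^{-1}(\{b'\}))$, I would then partition $S$ along the fibers of $p$, writing $\sum_{a\in S} = \sum_{a'\in S'}\sum_{a\in p^{-1}(\{a'\})}$. By the hypothesis of the lemma, $\P_Y(a,p^{-1}(\{b'\}))$ takes the common value $\P_{Y'}(a',b')$ on each fiber $p^{-1}(\{a'\})$, so it factors out of the innermost sum, leaving $\sum_{a\in p^{-1}(\{a'\})}\mu(a) = \mu'(a')$. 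Re-assembling, one obtains
\[
\mu'(b') = \sum_{a'\in S'}\mu'(a')\,\P_{Y'}(a',b'),
\]
which is exactly the invariance of $\mu'$ under $\P_{Y'}$.

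There is really no hard step here: the only subtle point is that the hypothesis is tailored precisely so that the quantity $\P_Y(a,p^{-1}(\{b'\}))$ depends on $a$ only through $p(a)$, which is what makes the regrouping by fibers collapse to the right formula. If anything, one should just make sure that $\mu'$ is well-defined as a measure (non-negativity and countable additivity are immediate from those of $\mu$), but no $\sigma$-finiteness or finiteness assumption is needed for the formal manipulation, since all terms are non-negative.
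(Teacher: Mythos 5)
Your proof is correct and is essentially the paper's own argument: the same chain of equalities (definition of $\mu'$, the fiber decomposition $\sum_{a\in S}=\sum_{a'\in S'}\sum_{a\in p^{-1}(\{a'\})}$, the hypothesis that $\P_Y(a,p^{-1}(\{b'\}))$ is constant on fibers, and the invariance of $\mu$), merely written in the reverse direction, starting from $\mu'(b')$ instead of from $\sum_{a'}\mu'(a')\P_{Y'}(a',b')$. No changes needed.
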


\begin{proof} Let $\mu$ be an invariant measure for $\P_Y$, and let $b'\in S'$. We have
\begin{align*}
\sum_{a'\in S'} \mu'(a')\P_{Y'}(a',b')
&= \sum_{a'\in S'} \left(\sum_{a\in p^{-1}(\{a'\})} \mu(a)\right)\P_{Y'}(a',b') \\
&= \sum_{a'\in S'} \sum_{a\in p^{-1}(\{a'\})} \mu(a) \P_Y(a,p^{-1}(\{b'\}))\\
&= \sum_{s\in S} \mu(s) \P_Y(s,p^{-1}(\{b'\}))=\mu(p^{-1} (\{b'\}))=\mu'(b'),
\end{align*}
meaning that $\mu'$ is invariant for $\P_{Y'}$.
\end{proof}

For $\mu\in\textsc{Ncond}(G)$, let us denote by $\Pi_B$ the unique stationary probability law associed to the chain $(B_n)_{n\in\N}$ (cf. Prop.~\ref{prop:B_nF_n_rec_pos}). It is defined by
\begin{equation*}
\forall \w \in \mathbb{B}, \; \Pi_B(\w )\isdef \alpha \nu_B(\w), 
\end{equation*}
where $\alpha\isdef(\nu_B(\mathbb B))^{-1}$ is given by (\ref{eq:defalpha}) in view of (\ref{eq:defalpha0}). 
Let us now introduce the projection 
\[p:\begin{cases}
\mathbb{B} \longrightarrow \mathbb{W}\\
 \bw \longmapsto \bw|_{\V},
 \end{cases}
 \]
 which is well-defined from Lemma~\ref{lem:ADM_B}. We have the following result: 

\begin{proposition}\label{prop:W_nRec}
Let $\mu\in\textsc{Ncond}(G)$. Then, the Markov chain $(W_n)_{n\in\N}$ is positively recurrent, and its unique stationary probability distribution is the measure $\Pi_W$ defined on $\mathbb W$ by:
\begin{equation*}
\forall w\in\mathbb{W}, \; \Pi_W(w)\isdef \Pi_B(p^{-1}(w))=\sum_{\w \in \mathbb{B} \; : \; \w |_{\V}=w} \Pi_B(\w). 
\end{equation*}
\end{proposition}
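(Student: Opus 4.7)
The plan is to obtain $\Pi_W$ as the pushforward of $\Pi_B$ under the projection $p$, by invoking Lemma \ref{lem:projCM}. Concretely, I would take $\P_Y=\P_B$ on $S=\mathbb{B}$, $\P_{Y'}=\P_W$ on $S'=\mathbb{W}$, and $p(\bw)=\bw|_{\V}$. The content of the proof then amounts to three verifications: (i) the compatibility condition of Lemma \ref{lem:projCM} holds for $p$, (ii) $\Pi_B$ is a probability measure with $p_*\Pi_B=\Pi_W$ a probability measure on $\mathbb W$, and (iii) $(W_n)_{n\in\N}$ is irreducible on $\mathbb{W}$.

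For (i), I would argue that for any $\bw\in\mathbb B$ with $p(\bw)=w$, and any incoming class $v\in\maV$ (drawn with probability $\mu(v)$), the updating of the backward chain is deterministically compatible with the updating of the queue-detail: if $|w|_{\maE(v)}=0$, then the $v$-item is appended to both $\bw$ and $w$, and the new backward state projects to $wv$; otherwise, under \textsc{fcfm}, the oldest compatible letter in $\bw$ (which necessarily lies in $\V$, since matched letters are barred and barred letters play no role in future matches) is barred in $\bw$ and removed from $w$, so again the new backward state projects to the new queue detail. In particular, the map $\bw\mapsto p(\bw\odot_B v)$ depends on $\bw$ only through $p(\bw)=w$ and equals $w\odot_{\textsc{fcfm}} v$. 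Summing over $v$ gives $\P_B(\bw,p^{-1}(\{w'\}))=\P_W(w,w')$ for every $\bw\in p^{-1}(\{w\})$, which is exactly the hypothesis of Lemma \ref{lem:projCM}.

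For (ii), Proposition \ref{prop:B_nF_n_rec_pos} provides the stationary probability distribution $\Pi_B=\alpha \nu_B$ on $\mathbb B$. Since $p$ is well-defined on $\mathbb B$ (Lemma \ref{lem:ADM_B} ensures $\bw|_{\V}\in\mathbb W$) and surjective (every $w\in\mathbb W$ is its own preimage), the measure $\Pi_W(w)=\Pi_B(p^{-1}(\{w\}))=\sum_{\bw\in p^{-1}(\{w\})}\Pi_B(\bw)$ is a probability measure on $\mathbb W$. Applying Lemma \ref{lem:projCM} then yields that $\Pi_W$ is stationary for $\P_W$.

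For (iii), I would check irreducibility of $(W_n)_{n\in\N}$ on $\mathbb W$ using the same argument as for $(B_n)_{n\in\N}$ in the proof of Proposition \ref{prop:B_nF_n_rec_pos}: starting from any $w=w_1\dots w_q\in\mathbb W$, a sequence of $q$ arrivals of respective classes in $\maE(w_1),\maE(w_2),\dots,\maE(w_q)$ (each having positive $\mu$-probability, since $\mu$ has full support and $G$ is connected) drives the chain to $\varepsilon$; conversely, the constructive argument underlying Lemma \ref{lem:ADM_B} shows that $\varepsilon$ leads to any $w\in\mathbb W$ with positive probability. Hence $\P_W$ is irreducible, so the stationary probability is unique and the chain is positively recurrent with stationary distribution $\Pi_W$.

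I expect the only genuinely delicate point to be verifying (i), where one must carefully observe that matched letters are always barred in the backward chain and therefore cannot be selected again, which guarantees that the \textsc{fcfm} choice in $\mathbb B$ and in $\mathbb W$ agree. Once (i) is secured, the rest of the argument is a routine combination of Lemma \ref{lem:projCM}, Proposition \ref{prop:B_nF_n_rec_pos} and the standard irreducibility argument.
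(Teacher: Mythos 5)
Your proposal is correct and follows essentially the same route as the paper: the paper likewise applies Lemma \ref{lem:projCM} to $\P_B$ and $\P_W$ via the projection $p$, using the fact that $W_n$ is the restriction of $B_n$ to its letters in $\V$, notes $\Pi_W(\mathbb W)=\Pi_B(\mathbb B)=1$, and concludes uniqueness and positive recurrence from irreducibility of $(W_n)_{n\in\N}$ on $\mathbb W$. Your verifications (i) and (iii) merely spell out details the paper asserts more briefly.
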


\begin{proof} Let $\mu\in\textsc{Ncond}(G)$. We can apply Lemma~\ref{lem:projCM} to $\P_B$ and $\P_W$ to prove that $\Pi_W$ is a stationary distribution for $(W_n)_{n\in\N}$. 
Indeed, using the fact that for any $n\in\mathbb{N}$, $W_n$ is the restriction of the word $B_n$ to its letters in $\V$, we have that 
\begin{equation*}
\forall w,w'\in \mathbb{W}, \; \forall \w\in p^{-1}(\{w\}), \; \P_B(\w,p^{-1}(\{w'\})) = \P_W(w,w').
\end{equation*} 
The measure $\Pi_W$ is a probability distribution on $\mathbb{W}$, since $\Pi_W(\mathbb{W})=\Pi_B(p^{-1}(\mathbb{W}))=\Pi_B(\mathbb{B})=1$.
The chain $(W_n)_{n\in\N}$ being irreducible on $\mathbb{W}$, it follows that $\Pi_W$ is its unique stationary probability distribution.
\end{proof}

\subsection{Concluding the proof.}
We first show that $\textsc{stab}(G,\fcfm)=\textsc{Ncond} (G)$. 
First, we know from Proposition \ref{thm:mainmono} that $\textsc{stab}(G,\fcfm) \subset \textsc{Ncond}(G).$ Also, 
from Proposition \ref{prop:ncond}, $ \textsc{Ncond}(G)\neq \emptyset$, since $G$ is not a bipartite graph. 
Then, for all $\mu \in \textsc{Ncond}(G)$, by Proposition~\ref{prop:W_nRec}, the chain $(W_n)_{n\in\N}$ is positively recurrent on $\mathbb{W}$. 
So, $\textsc{Ncond}(G)\subset \textsc{stab}(G,\fcfm)$, and therefore $\textsc{stab}(G,\fcfm)=\textsc{Ncond} (G)$. 

\bigskip

We now fix $\mu\in \textsc{Ncond}(G)$, and compute explicitly the unique stationary probability distribution $\Pi_W$ of the chain 
$(W_n)_{n\in\N}$. 
First, if $w=\varepsilon$, then $p^{-1}(\{w\})=\varepsilon$ and $\Pi_W(\varepsilon)=\alpha$, given by (\ref{eq:defalpha}). 
Now, fix $w\neq\varepsilon$ in $\mathbb W$. 
By (\ref{eq:defwords}), we know that if $w=w_1\dots w_q\in \mathbb{W}$, $q\ge 1$, then
$p^{-1}(\{w\})=w_1\,\A^*_1\,w_2\,\A^*_2\dots w_q\,\A^*_q,$ with $\A_i=\overline{\E(\{w_1,\dots,w_i\})^c}$, for all $i\in\llbracket 1,q \rrbracket$. 
Applying Lemma \ref{lem:nu(A*)} and observing that 
for all $i$, $\mu\left(\overline{\A_i}\right)<1$ since $\A_i\varsubsetneq\oV$, it follows that 
\begin{align*}
\Pi_W(w)&=\Pi_B(w_1\,\A^*_1\,w_2\,\A^*_2\dots w_q\,\A^*_q)\\
&=\alpha \nu_B(w_1\,\A^*_1\,w_2\,\A^*_2\dots w_q\,\A^*_q)\\
&=\alpha \prod_{i=1}^q{\mu(w_i)\over 1-\mu\left(\overline{\A_i}\right)} 
=\alpha \prod_{i=1}^q{\mu(w_i)\over \mu(\E(\{w_1,\dots,w_i\}))}\cdot
\end{align*}
The proof is complete. 


\subsection{Observation on the sequence of copied letters.}
\label{subsec:iid}
As we will explain below, observe that under the {\sc fcfm} policy, the i.i.d. sequence of arrivals induces an i.i.d. sequence for the copied letters, with the same distribution on $\overline \maV$ as on $\maV$. Notice that a similar result was observed for the related construction in \cite{ABMW17}. 

Let ${\mathbb M}\subset {\maV}^*$ represent the set of words $z\in\maV^*\setminus\{\varepsilon\}$ such that $W^{\tiny{\textsc{fcfm}}}(z)=\varepsilon$ and $W^{\tiny{\textsc{fcfm}}}(z')\ne \varepsilon$ for any prefix $z'$ of $z$. In other words, a word $z\in\mathbb M$ represents a sequence of arrivals leading to a first reset to zero of the buffer. 
The i.i.d. sequence of letters of distribution $\mu$ can be uniquely decomposed into an i.i.d. sequence of words of ${\mathbb M}$ of distribution $\mu_{{\mathbb M}}$, where for any $w=w_1\ldots w_q\in {\mathbb M}$, $\mu_{{\mathbb M}}(w)=\prod_{i=1}^q\mu(w_i)$. Note also that the distribution $\mu_{{\mathbb M}}$ characterizes $\mu$. In particular, if we have an i.i.d. sequence of words of ${\mathbb M}$ of law $\mu_{{\mathbb M}}$, then the sequence of letters out of which the words are made is an i.i.d. sequence of law $\mu$.

Let us denote by $f:{\mathbb M}\to{\mathbb M}$ the map that, to a word $w=w_1\ldots w_q\in {\mathbb M}$, associates the word $v=v_1\ldots v_q$, where $v_i$ is the class of the item with which $w_i$ is matched. If follows from Prop.~3 of~\cite{MBM17} that the map $f$ is well-defined and bijective, its inverse function being the function $g:{\mathbb M}\to{\mathbb M}, w\mapsto \overleftarrow{f(\overleftarrow{w})}$. Furthermore, by definition of ${\mathbb M}$, the action of $f$ only consists in permuting letters. The measure $\mu_{\mathbb M}$ having a product structure, it follows that $f$ preserves the distribution $\mu_{{\mathbb M}}$: for any $w\in {\mathbb M}, \mu_{\mathbb M}(w)=\mu_{\mathbb M}(f^{-1}(\{w\}))$. Consequently, if $(W_i)_{i\in \N}$ is an i.i.d. sequence of words distributed according to $\mu_{{\mathbb M}}$, then the sequence $(f(W_i))_{i\in\N}$ is also an i.i.d. sequence of words distributed according to $\mu_{{\mathbb M}}$, meaning that the sequence of letters out of which the words $f(W_i)$ are made is an i.i.d. sequence of law $\mu$. 

Thus, replacing each arrival by the class of the item with which it is matched preserves the measure $\mu^{\otimes \N}$. 

\section{Remaining proofs}\label{sec:otherproofs}

Throughout the section, $G$ is a connected multigraph, $\check G$ is its maximal subgraph and $\hat G$ denotes its minimal blow-up graph. 
To simply compare a $(G,\Phi,\mu)$ system with the two corresponding matching models on graphs 
$\left(\hat G,\hat \Phi,\hat\mu\right)$ and $\left(\check G,\check \Phi,\mu\right)$, let us add an ``hat'' (resp. a ``check'') 
to all characteristics of the second (resp. the third) system: in particular, we denote, for all $n$, by $\hat V_n$ (resp. $\check V_n$), the class of the item entering in the 
$\left(\hat G,\hat\Phi,\hat\mu\right)$ (resp. $\left(\check G,\check\Phi,\mu\right)$) system at time $n$. 
The natural Markov chain of the system is then denoted by $\suite{\hat W_n}$ (resp. $\suite{\check W_n}$) 
and its state space, by $\hat{\mbW}$ (resp. $\check{\mbW}$). Specifically, 
\begin{align*}
\hat{\mathbb W} &=\Bigl\{ w\in \left(\maV\cup \underline{\maV_1}\right)^*\; : \; \forall  i\neq j \; \text{s.t.} \; (i,j) \in \hat\maE, \; |w|_i|w|_j=0 \Bigr\};\\
\check{\mathbb W} &=\Bigl\{ w\in \maV^*\; : \; \forall  i\neq j \; \text{s.t.} \; (i,j) \in \check\maE, \; |w|_i|w|_j=0 \Bigr\}.
\end{align*} 
Observe that we have $\mathbb W \subset \check{\mathbb W} \subset \hat{\mathbb W}$. 

For any measurable mapping $F:\mbW \to \R$ (resp. $\check{\mbW} \to \R$, $\hat{\mbW} \to \R$) and any given $w\in\mbX$ (resp. $\hat{w}\in\hat{\mbW}$, $\check{w}\in\check{\mbW}$), we denote by 
$\Delta F^\Phi_\mu(w)$ (resp. $\hat{\Delta} F^{\hat\Phi}_{\hat\mu}(\hat w)$, $\check{\Delta} F^{\check\Phi}_{\check\mu}(\check w)$) the drift of the chain $\suite{W_n}$ (resp. $\suite{\hat{W}_n}$, 
$\suite{\check{W}_n}$) starting from $w$ (resp. $\hat{w}$, $\check{w}$) for a $(G,\Phi,\mu)$  (resp. $(\hat G,\hat \Phi,\hat \mu)$, $(\check G,\check\Phi,\check\mu)$) system. 
In other words, for any $n\in\N$ we denote 
\begin{align*}
\Delta^\Phi_\mu F(w)&=\esp{F(W_{n+1})-F(W_n)\,\big|\,W_n=w} ;\\
\hat{\Delta}^{\hat\Phi}_{\hat\mu}F(\hat w)&=\esp{F(\hat W_{n+1})-F(\hat W_{n})\,\big|\,\hat W_n=\hat w};\\ 
\check{\Delta}^{\check\Phi}_{\check\mu}F(\check w)&=\esp{F(\check W_{n+1})-F(\check W_{n})\,\big|\,\check W_n=\check w}. 
\end{align*}

\subsection{Drift inequalities}
\label{sec:drift}


Consider the following  mappings, 
\begin{equation}
\label{eq:QuadraticFunction}
Q:\begin{cases}
\hat{\mathbb W} &\longrightarrow\R^+\\
\hat w &\longmapsto \sum\limits_{i=1}^{|\maV|}\left(|\hat w|_i\right)^2 + \sum\limits_{i=1}^{|\maV_1|}\left(|\hat w|_{\underline i}\right)^2;
\end{cases}\end{equation}
\begin{equation}
\label{eq:LinearFunction}
L: \begin{cases}
\hat{\mathbb W} &\longrightarrow \R^+\\
\hat w &\longmapsto \sum\limits_{i=1}^{|\maV|} |\hat w|_i + \sum\limits_{i=1}^{|\maV_1|}|\hat w|_{\underline i};\quad\;\;\quad 
\end{cases}
\end{equation}
where it follows from the observation above that $Q$ and $L$ are well defined also on $\mathbb W$ and $\check{\mathbb W}$. 

\medskip


\noindent We have the following result,

\begin{proposition}
	\label{prop:extquad}
	Let $\Phi$ be an admissible policy on $G$ and $\mu\in\mathscr M(\maV)$. 
	Let $\hat{\Phi}$ be a matching policy extending $\Phi$ on $\hat G$ and $\hat \mu$ be a measure extending $\mu$ on $\hat\maV$. 
	Then, 
	for all $w \in\mbW$ we have that 
$\Delta^\Phi_\mu Q(w)
\leq 
\hat\Delta^{\hat\Phi}_{\hat\mu} Q(w). $
\end{proposition}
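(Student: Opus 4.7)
\emph{Plan of proof.} Decompose each drift into a sum of per-arrival contributions and compare them class by class, using $\hat\mu(v)=\mu(v)$ for $v\in\maV_2$ and $\hat\mu(v)+\hat\mu(\underline v)=\mu(v)$ for $v\in\maV_1$. It then suffices to show: (a) for each $v\in\maV_2$, the contributions to the two drifts coincide; and (b) for each $v\in\maV_1$, the weighted sum of the $v$- and $\underline v$-arrival contributions in $\hat G$ dominates the $v$-arrival contribution in $G$. Claim (a) is immediate because $w\in\mathbb W$ implies $|w|_{\underline i}=0$ for every $i\in\maV_1$, so the pool of items in the queue available to match an incoming $v$-item is $\{j\in\maE(v):|w|_j\ge 1\}$ in both graphs, and Definition~\ref{def:extendspol} then forces $\Phi$ and $\hat\Phi$ to select the same match, yielding identical $Q$-changes.

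The heart of the argument is claim (b), which I split on $|w|_v\in\{0,1\}$ (the only values allowed in $\mathbb W$). When $|w|_v=1$, the adjacency constraint $|w|_v|w|_j=0$ forces $|w|_j=0$ for every $j\in\maE(v)\setminus\{v\}$, so all three effective match sets are entirely dictated by the state: $\Phi$ must self-match in $G$ (change $-1$); $\hat\Phi$ on a $v$-arrival in $\hat G$ finds no compatible stored item (no self-loop, and $|w|_{\underline v}=0$) and $v$ joins the queue (change $+3$); $\hat\Phi$ on a $\underline v$-arrival in $\hat G$ matches the stored $v$-item via the $v$-$\underline v$ edge (change $-1$). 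The corresponding $v$-summand evaluates to
\[
\hat\mu(v)\cdot 3 + \hat\mu(\underline v)\cdot(-1) - \mu(v)\cdot(-1) = 4\hat\mu(v) \ge 0,
\]
using $\mu(v)=\hat\mu(v)+\hat\mu(\underline v)$. When $|w|_v=0$, the self-loop at $v$ in $G$ and the $v$-$\underline v$ edge in $\hat G$ are both inactive, so the three effective match sets reduce to the common set $\{j\in\maE(v)\setminus\{v\}:|w|_j\ge 1\}$; the extension property gives $\hat\Phi(w,v)=\Phi(w,v)$, and a symmetry argument (below) gives $\hat\Phi(w,\underline v)=\hat\Phi(w,v)$, so all three $Q$-changes coincide and the $v$-summand vanishes.

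The main obstacle is the $|w|_v=0$ branch: Definition~\ref{def:extendspol} literally constrains only arrivals in $\maV$, so establishing that $\hat\Phi(w,\underline v)$ agrees with $\hat\Phi(w,v)$ requires arguing that since $v$ and $\underline v$ share the same neighborhood in $\hat G$ and, in this state, the same effective match set, any admissible extension must treat them symmetrically. Once this is granted, the remaining computation is elementary, while the case $|w|_v=1$ is completely forced by the $\mathbb W$-constraint, so the bound $4\hat\mu(v)\ge 0$ holds there without any further assumption on $\hat\Phi$, giving the claim after summing over $v\in\maV_1$.
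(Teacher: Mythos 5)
Your decomposition and computations are essentially the paper's own proof. The paper partitions the arrivals into (i) classes in $\maV_2$ together with self-looped classes that have a compatible item but no item of their own class in line, (ii) the set $\mathcal Z_w$ of self-looped classes with no available match, and (iii) the set $\mathcal O_w=\{i\in\maV_1:|w|_i=1\}$; its key identity $\Delta^\Phi_\mu Q(w)=\hat\Delta^{\hat\Phi}_{\hat\mu}Q(w)-4\hat\mu(\mathcal O_w)$ is exactly your per-class computation $3\hat\mu(v)-\hat\mu(\underline v)+\mu(v)=4\hat\mu(v)$ summed over $v$ with $|w|_v=1$, and your treatment of the other cases matches steps (i)--(ii).

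The one point of divergence is the obstacle you flag yourself, and there your proposed fix is not valid: admissibility of $\hat\Phi$ does \emph{not} force symmetric treatment of $v$ and $\underline v$, even though they have the same neighbourhood and, in the states at hand, the same set of available matches. For instance, a priority policy on $\hat G$ may rank the common neighbours of $v$ and $\underline v$ in two different orders; such a $\hat\Phi$ satisfies the letter of Definition~\ref{def:extendspol} (which only constrains arrivals of classes in $\maV$, since only those can be ``the same arrival'' in both systems), and yet for it the $v$-summand in your $|w|_v=0$ branch can be strictly negative — if the $\underline v$-arrival matches a strictly longer queue than the $v$-arrival does, the weighted contribution is $2\hat\mu(\underline v)\bigl(k_{j}-k_{j'}\bigr)<0$, and one can build states in $\mbW$ where this is the whole difference of the two drifts. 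So the symmetric behaviour cannot be derived; it must be taken as part of what ``$\hat\Phi$ extends $\Phi$'' means. This is in fact how the paper argues: in its proof, an incoming item of class $i$ \emph{or of class $\underline i$} in $(\hat G,\hat\Phi,\hat\mu)$, facing the same possible matches as an incoming $i$-item in $(G,\Phi,\mu)$, is asserted to make the same choice of match (or the same random choice in distribution) ``as $\hat\Phi$ extends $\Phi$'' — i.e., an arrival of $\underline i$ in $\hat G$ is read as ``the same arrival'' as an arrival of $i$ in $G$ in Definition~\ref{def:extendspol}. Under that reading your $|w|_v=0$ branch closes immediately, and the rest of your argument is complete and identical to the paper's; under the strictly literal reading, neither your symmetry claim nor the paper's one-line justification is a deduction, so state the symmetric-choice property explicitly as (part of) the hypothesis rather than trying to prove it from admissibility.
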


\begin{proof}
Fix $w \in\mbW$ throughout the proof. Recall that for all $i\in\maV_1$ (if any), we have that $|w|_i\in\{0,1\}$, and let us set  
\begin{align*}
\mathcal O_{w} &\isdef\{i\in \maV_1:|w|_i = 1\},\\
\mathcal Z_{w} &\isdef\{i\in \maV_1: |w|_i = 0\mbox{ and } |w|_j = 0\mbox{, for any }j \in \maE(i)\}.
\end{align*}


 \medskip 
 
{First, for any $i \in\maV_2$, an incoming item of class $i$ finding a system $(G,\Phi,\mu)$ in a state $w$ finds the same possible matches (if any) as an incoming item of class $i$ finding the system $\left(\hat G,\hat \Phi,\hat \mu\right)$ in a state $w$. Likewise, if a system $(G,\Phi,\mu)$ is in state $w$, then, for any $i \in \mathcal V_1\cap (\mathcal O_{w})^c \cap (\mathcal Z_{w})^c$, an incoming of class $i$ 
finds the same possible matches (if any) as an incoming item of class $i$ or of class $\underline{i}$ finding the system $\left(\hat G,\hat \Phi,\hat \mu\right)$ in the state $w$. 
As $\hat\Phi$ extends $\Phi$, in all cases the choice of the match (if any) of the incoming item is then the same, or follows the same distribution in case of a draw, in both systems. 
Therefore, as $\hat\mu$ extends $\mu$, for all $n\in\N$ we get that 
%
%
%
\begin{multline}\label{eq:compML2}
\esp{\left(Q(W_{n+1})-Q(W_n)\right)\ind_{\{V_{n+1} \in \mathcal V_2\cup\mathcal V_1\cap (\mathcal O_{w})^c \cap (\mathcal Z_{w})^c\}}\,\big|W_n= w}\\
= \mathbb E\Biggl[\!\left(\!Q\!\left(\hat W_{n+1}\right)\!\!-\!Q\!\left(\hat W_{n}\right)\!\right)\!\!\ind_{\left\{\hat V_{n+1} \in \mathcal V_2\cup\left( \mathcal V_1\cap (\mathcal O_{w})^c \cap (\mathcal Z_{w})^c\right) \cup \left(\underline{\mathcal V_1\cap (\mathcal O_{w})^c \cap (\mathcal Z_{w})^c}\right)\right\}}\!\big|\hat W_n=w\!\Biggl].
\end{multline}
Now, if a system $(G,\Phi,\mu)$ is in state $w$, then, for any $i\in \mathcal Z_{w}$, and incoming item of class $i$ finds no possible match. All the same, 
 if the system $\left(\hat G,\hat\Phi,\hat\mu\right)$ is in the state $w$ and the entering item is of class $i\in \mathcal Z_{w}$ or of class $\underline i \in \underline{\mathcal Z_{w}}$, then the entering item does not find any possible match in $\left(\hat G,\hat\Phi,\hat\mu\right)$. In all cases, one coordinate of the Markov chain increases from zero to 
 one, and thus for all $n\in\N$, 
\begin{multline}\label{eq:compML3}
\esp{\left(Q(W_{n+1})-Q(W_n)\right)\ind_{\{V_{n+1} \in\mathcal Z_{w}\}}\,|\,W_n= w}\\
\begin{aligned}
=& \;\mu(\maZ_{w})\\
=& \;\hat\mu\left(\maZ_{w}\right) + \hat\mu\left(\underline{\maZ_{w}}\right)\\
=& \;\esp{\left(Q(\hat W_{n+1})-Q(\hat W_{n})\right)\ind_{\{\hat V_{n+1} \in \mathcal Z_{w}\cup \underline{\maZ_{w}}\}}\,|\,\hat W_n=w}.
\end{aligned}\end{multline}}

\medskip
\noindent


Last, if the system $\left(G,\Phi,\mu\right)$ is in the state $w$, then, the arrival of a class $i$-item, for $i\in \mathcal O_{w}$, leads to the matching of two items 
of class $i$. Therefore, as $|w|_i=1$ we obtain  
\begin{equation}
\left(Q(W_{n+1})-Q(W_n)\right)\ind_{\{V_{n+1} \in \mathcal O_{w}\}}
= -\ind_{\{V_{n+1} \in \mathcal O_{w}\}}.\label{eq:compML4bis}
\end{equation}
Now, suppose that the system $\left(\hat G,\hat\Phi,\hat\mu\right)$ is in the state $\hat W_n =w$. 
Then, if an item of class $i\in \mathcal O_{w}$ enters in the system, the corresponding item is not matched and the number of $i$-items in the system increases from 1 to 2. 
Therefore we get that 
 \begin{equation}
 \label{eq:compML4ter}
 \left(Q\left(\hat W_{n+1}\right)-Q\left(\hat W_{n}\right)\right)\ind_{\left\{\hat V_{n+1} \in \mathcal O_{w}\right\}}
= 3\ind_{\left\{\hat V_{n+1} \in \mathcal O_{w}\right\}}.
 \end{equation}
  \noindent
If on the other hand, an item of class $\underline i\in \underline{\mathcal O_{w}}$ enters in the same system $\left(\hat G,\hat\Phi,\hat\mu\right)$, 
then, the corresponding item match with the stored class $i$-item and so the coordinate $i$ of the chain decreases to 0. Thus, 
 \begin{equation*}
\left(Q\left(\hat W_{n+1}\right)-Q\left(\hat W_{n}\right)\right)\ind_{\left\{\hat V_{n+1} \in \underline{\mathcal O_{w}}\right\}}
= -\ind_{\left\{\hat V_{n+1} \in \underline{\mathcal O_{w}}\right\}}.
  \end{equation*}
Gathering this with (\ref{eq:compML4bis}) and (\ref{eq:compML4ter}) and then taking expectations, we obtain that 
\begin{multline}\label{eq:compML4}
\esp{\left(Q(W_{n+1})-Q(W_n)\right)\ind_{\{V_{n+1} \in\mathcal O_{w}\}}\,\big|\,W_n= w}\\
= \esp{\left(Q\left(\hat W_{n+1}\right)-Q\left(\hat W_{n}\right)\right)\ind_{\left\{\hat V_{n+1} \in \mathcal O_{w}\cup \underline{\mathcal O_{w}}\right\}}\,\big|\,\hat W_n=w} -4\hat\mu\left(\mathcal O_{w}\right).
\end{multline}
Finally, (\ref{eq:compML2}) together with (\ref{eq:compML3}) and (\ref{eq:compML4}) give that
\begin{equation} \label{eq:compMLfinal}
\Delta^\Phi_\mu Q(w)
=\hat\Delta^{\hat\Phi}_{\hat\mu} Q(w)- 4\hat\mu\left(\mathcal O_{w}\right),
\end{equation}
which concludes the proof. 
\end{proof}

\begin{definition}
	\label{def:reducesspol}
	Let $G$ be a connected multigraph and $\Phi$ be an admissible matching policy on $G$. We say that $\check\Phi$ reduces $\Phi$ if, 
	for any ${\mu}\in\mathscr M({{\maV}})$, whenever the two systems  $(\check G,\check\Phi,\mu)$ and $(G,\Phi,\mu)$ are in the 
	same state ${w} \in {\mbW}$ and welcome the same arrival, then $\check\Phi$ and $\Phi$ induce the same choice of match, if any. 
\end{definition}



\begin{proposition}
	\label{prop:EspGmultiLeqEspGgrapg}
	Let $G=(\maV,\maE)$ be a connected multigraph and $\Phi$ be a class admissible policy on $G$ and $\mu\in\mathscr M(\maV)$. 
	Let $\hat{\Phi}$ be a matching policy extending $\Phi$ on $\hat G$, $\hat \mu$ a measure extending $\mu$ on $\hat\maV$ 
	and $\check \Phi$ be a policy that reduces $\Phi$ on $\check G$. 
	Then the drift of the respective Markov chains are such that for all $w\in\mbW$, 
	\begin{equation}
	\label{eq:driftL}
	{\Delta}^{\Phi}_{\mu}L(w) \le \hat{\Delta}^{\hat\Phi}_{\hat\mu}L(w) \le \check{\Delta}^{\check\Phi}_{\mu}L(w).
	\end{equation}
	\end{proposition}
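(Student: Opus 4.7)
The plan is to mimic the case-by-case decomposition used in the proof of Proposition~\ref{prop:extquad} for the quadratic function $Q$, but adapted to the linear function $L$, which tracks only the \emph{total} number of items. Fix $w\in\mathbb{W}$. Recalling that $w$ is a word over $\maV$, one has $|w|_{\underline i}=0$ for all $i\in\maV_1$, and by the constraints defining $\mathbb{W}$ one has $|w|_i\in\{0,1\}$ for $i\in\maV_1$. The key observation is that, for an incoming letter $V_{n+1}$, the value of $L$ simply increases by $1$ if no match is found, and decreases by $1$ otherwise. Therefore the three drifts in \eqref{eq:driftL} all reduce to
\[
\Delta^\Phi_\mu L(w)=1-2\mu(\maM_w),
\]
where $\maM_w\subset\maV$ (resp.\ $\hat\maM_w\subset\hat\maV$, $\check\maM_w\subset\maV$) denotes the set of arrival classes that would immediately find a match in the system $(G,\Phi,\mu)$ (resp.\ $(\hat G,\hat\Phi,\hat\mu)$, $(\check G,\check\Phi,\mu)$) in state $w$, and analogous expressions hold for $\hat\Delta^{\hat\Phi}_{\hat\mu}L(w)$ and $\check\Delta^{\check\Phi}_\mu L(w)$.

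Next I would split the analysis by the class of arrival. For any $v\in\maV_2$, the set of compatible items present in $w$ is the same in $G$, $\hat G$, and $\check G$, so the indicator that $v$ finds a match is identical in the three systems; since $\hat\mu(v)=\mu(v)$, these arrivals contribute equally to all three drifts. For $v\in\maV_1$, I would distinguish the three regimes introduced in the proof of Proposition~\ref{prop:extquad}:
\begin{itemize}
\item If $v\in\maZ_w$, then $v$ has no compatible item in $w$ in any of the three systems, so the contributions again agree once one uses $\hat\mu(v)+\hat\mu(\underline v)=\mu(v)$.
\item If $v\in\maV_1\setminus(\maO_w\cup\maZ_w)$, then some neighbor $j\neq v$ of $v$ in $\check\maE$ satisfies $|w|_j>0$, so $v$ (and $\underline v$, in $\hat G$) is matched in all three systems; contributions agree.
\item If $v\in\maO_w$, i.e.\ $|w|_v=1$, then by definition of $\mathbb{W}$ one has $|w|_j=0$ for every $j\in\check\maE(v)$. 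In $G$, the self-loop produces a match; in $\check G$, no match is possible; in $\hat G$, an arrival of class $v$ finds no match while an arrival of class $\underline v$ matches the stored $v$-item.
\end{itemize}

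Summing up, the $\maO_w$ contributions give a net discrepancy
\[
\hat\Delta^{\hat\Phi}_{\hat\mu}L(w)-\Delta^\Phi_\mu L(w)=2\sum_{v\in\maO_w}\hat\mu(v)\ge 0,\qquad
\check\Delta^{\check\Phi}_\mu L(w)-\hat\Delta^{\hat\Phi}_{\hat\mu}L(w)=2\sum_{v\in\maO_w}\hat\mu(\underline v)\ge 0,
\]
which yields both inequalities in \eqref{eq:driftL}. The slightly delicate point, where Definitions~\ref{def:extendspol} and \ref{def:reducesspol} genuinely matter, is the ``otherwise'' case for $v\in\maV_1$: one must check that the presence/absence of a match is consistent across the three systems. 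This relies on the fact that $\hat\Phi$ extends $\Phi$ and $\check\Phi$ reduces $\Phi$, so the chosen match (if any) coincides in the three dynamics, and in particular the very existence of a match does; the policies need not be class-admissible for the $L$-drift identities to hold, since $L$ is insensitive to \emph{which} compatible neighbor is picked. I expect the main bookkeeping obstacle to be the cleanly stated identification $\mu(v)=\hat\mu(v)+\hat\mu(\underline v)$ for $v\in\maV_1$ together with the bookkeeping for $v\in\maO_w$, precisely as in \eqref{eq:compML4} of Proposition~\ref{prop:extquad}.
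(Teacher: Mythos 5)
Your proposal is correct and follows essentially the same route as the paper: the same case decomposition by arrival class ($\maV_2$, $\maZ_w$, $\maV_1\setminus(\maO_w\cup\maZ_w)$, $\maO_w$), exploiting that $L$ changes by $\pm 1$ depending only on the \emph{existence} of a match, and ending with the exact discrepancies $2\hat\mu(\maO_w)$ and $2\hat\mu(\underline{\maO_w})$, which are precisely the terms $2\hat\mu(\mathcal O_w)$ and $2\hat\mu(\underline{\mathcal P_w})$ in the paper's proof (note $\mathcal P_w=\mathcal O_w$ for $w\in\mathbb W$). The only cosmetic difference is that you state the two drift identities directly rather than referring back to the displays of Proposition \ref{prop:extquad}.
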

\begin{proof}
	Fix $w\in\mbX$. 
	The only case in which the proof of the left inequality of (\ref{eq:driftL}) differs from that of Proposition \ref{prop:extquad} is when 
	an item of class $i\in \mathcal O_{w}$ enters the $(\hat G,\hat\Phi,\hat\mu)$ system in a state $w$. 
	Then, we now get that for all $n$, 
	\begin{equation*}
	\left(L(\hat W_{n+1})-L(\hat W_{n})\right)\ind_{\{\hat V_{n+1} \in \mathcal O_{w}\}}
	=\sum_{i\in \mathcal O_{w}}\ind_{\{\hat V_{n+1} =i\}},
	\end{equation*}	
	\noindent which, taking expectations and reasoning as in (\ref{eq:compMLfinal}), leads to 
	\begin{equation*} 
	{\Delta}^{\Phi}_{\mu}L(w) 
	= \hat{\Delta}^{\hat\Phi}_{\hat\mu}L(w)  
	- 2\hat\mu\left(\mathcal O_{w}\right).
	\end{equation*} 
	\medskip
	We now turn to the proof of the right inequality of (\ref{eq:driftL}). Denote 
	\begin{equation*}
	\mathcal P_{w} =\{i\in \maV_1\,:\,|w|_i > 0\}. 
	\end{equation*}
	Fix also $n\in\N$, and denote by $\check V_{n}$, the class of the incoming item at time $n$ 
	in the $(\check G,\check\Phi,\mu)$ system. First, similarly to (\ref{eq:compML2}) and (\ref{eq:compML3}) we clearly get that 
	\begin{multline}\label{eq:compsep1}
	\esp{\left(L(\check W_{n+1})-L(\check W_{n})\right)\ind_{\{\check V_{n+1} \in \maV_2 \cup \left(\mathcal V_1\cap (\mathcal P_{w})^c \right)\}}
		\,|\,\check W_n= w}\\
	=\esp{\left(L(\hat W_{n+1})-L(\hat W_{n})\right)\ind_{\left\{\hat V_{n+1} \in \maV_2\cup \left(\mathcal V_1\cap (\mathcal P_{w})^c \cup \left(\underline{\mathcal V_1\cap (\mathcal P_{w})^c }\right)\right)\right\}}|\hat W_n=w}.
	\end{multline}
	Now, we also clearly have that 
	\begin{align*}
	\esp{\left(L(\check W_{n+1})-L(\check W_{n})\right)\ind_{\{\check V_{n+1} \in \mathcal P_{w}\}}
		\,|\,\check W_n= w} &=  \mu(\mathcal P_w);\\
	\esp{\left(L(\hat W_{n+1})-L(\hat W_{n})\right)\ind_{\{\hat V_{n+1} \in \mathcal P_{w}\}}
		\,|\,\hat W_n= w} &=  \hat \mu(\mathcal P_{w});\\
	\esp{\left(L(\hat W_{n+1})-L(\hat W_{n})\right)\ind_{\{\hat V_{n+1} \in \underline{\mathcal P_{w}}\}}
		\,|\,\hat W_n= w} &= -\hat \mu(\underline{\mathcal P_{w}}),
	\end{align*}
	which, together with (\ref{eq:compsep1}), implies that 
	\begin{equation*} 
	 \hat{\Delta}^{\hat\Phi}_{\hat\mu}L(w) 
	= \check{\Delta}^{\check\Phi}_{\mu}L(w)
	-2\hat \mu(\underline{\mathcal P_{w}}).
	\end{equation*}
\end{proof}


\subsection{Proofs of the remaining main results}
\label{subsec:proofs}

We are now in position to prove Theorem \ref{thm:ML}, Theorem \ref{thm:ppartite} and Proposition \ref{prop:extppartite}. 
Let us first observe the following result, 
\begin{lemma}
	\label{lemma:equivalenceNcond}
	For any $\mu\in\mathcal M(\maV)$ we have that
	\begin{equation*}
\mu\in\textsc{Ncond}(G)\iff\hat{\mu}_{\tiny{1/2}}\in\textsc{Ncond}\left(\hat{G}\right), 
	\end{equation*}
	where $\hat{\mu}_{\tiny{1/2}}$ is the extended measure of $\mu$ such that 
	$$\displaystyle\hat{\mu}_{\tiny{1/2}}(i) = \hat{\mu}_{\scriptsize{1/2}}(\underline{i}) ={1 \over 2} \mu(i)\quad \text{for all} \; i\in\maV_1.$$ 
	\end{lemma}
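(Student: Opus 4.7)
My plan is to prove the two implications separately. The easier direction, $\hat{\mu}_{1/2}\in\textsc{Ncond}(\hat G)\Rightarrow\mu\in\textsc{Ncond}(G)$, follows by noting that every $\mathcal{I}\in\mathbb{I}(G)$ is automatically a subset of $\mathcal{V}_2$, and therefore remains independent in $\hat G$ since the blow-up construction only modifies edges incident to $\mathcal{V}_1$. Using the identity $\hat{\mathcal{E}}(\mathcal{I})=\mathcal{E}(\mathcal{I})\cup\underline{\mathcal{E}(\mathcal{I})\cap\mathcal{V}_1}$ and the definition of $\hat\mu_{1/2}$, a direct computation gives $\hat\mu_{1/2}(\mathcal{I})=\mu(\mathcal{I})$ and $\hat\mu_{1/2}(\hat{\mathcal{E}}(\mathcal{I}))=\mu(\mathcal{E}(\mathcal{I}))$, so applying the hypothesis to $\mathcal{I}$ yields the desired strict inequality.

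For the forward implication, I would fix $\hat{\mathcal{I}}\in\mathbb{I}(\hat G)$ and decompose it uniquely as $\hat{\mathcal{I}}=\mathcal{I}_2\cup\mathcal{I}_1\cup\underline{\mathcal{J}_1}$, with $\mathcal{I}_2\subset\mathcal{V}_2$ and disjoint $\mathcal{I}_1,\mathcal{J}_1\subset\mathcal{V}_1$ (disjointness is forced since $i$ and $\underline i$ are $\hat G$-adjacent). The key observation is that $\underline{\mathcal{I}_1}\cup\mathcal{J}_1\subset\hat{\mathcal{E}}(\hat{\mathcal{I}})$---each $\underline i$ is $\hat G$-adjacent to $i\in\hat{\mathcal{I}}$, and symmetrically each $j$ to $\underline j\in\hat{\mathcal{I}}$---and that its $\hat\mu_{1/2}$-mass equals $\frac{1}{2}(\mu(\mathcal{I}_1)+\mu(\mathcal{J}_1))$, which exactly cancels the $\mathcal{V}_1$-contribution of $\hat\mu_{1/2}(\hat{\mathcal{I}})$. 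The inequality to prove thus reduces to $\mu(\mathcal{I}_2)<\hat\mu_{1/2}(S)$, where $S\isdef\hat{\mathcal{E}}(\hat{\mathcal{I}})\setminus(\underline{\mathcal{I}_1}\cup\mathcal{J}_1)$.

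To lower bound $\hat\mu_{1/2}(S)$, I would exhibit the three pairwise disjoint subsets $\mathcal{E}(\mathcal{I}_2)\cap\mathcal{V}_2$, $\mathcal{E}(\mathcal{I}_2)\cap\mathcal{V}_1$ and $\underline{\mathcal{E}(\mathcal{I}_2)\cap\mathcal{V}_1}$, all of which sit inside $S$; their disjointness from $\underline{\mathcal{I}_1}\cup\mathcal{J}_1$ is the statement that no $\mathcal{V}_1$-neighbor of $\mathcal{I}_2$ can lie in $\mathcal{I}_1\cup\mathcal{J}_1$, an immediate consequence of the $\hat G$-independence of $\hat{\mathcal{I}}$. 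Summing their $\hat\mu_{1/2}$-weights gives $\hat\mu_{1/2}(S)\geq\mu(\mathcal{E}(\mathcal{I}_2))$. When $\mathcal{I}_2\ne\emptyset$, the set $\mathcal{I}_2$ belongs to $\mathbb{I}(G)$, so the hypothesis $\mu\in\textsc{Ncond}(G)$ gives $\mu(\mathcal{I}_2)<\mu(\mathcal{E}(\mathcal{I}_2))\leq\hat\mu_{1/2}(S)$, finishing this case.

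The main obstacle is the residual case $\mathcal{I}_2=\emptyset$, where the bound $\hat\mu_{1/2}(S)\geq 0$ becomes vacuous and strictness has to come from another source. Here I would invoke the connectedness of $G$ together with the full-support property of $\mu$: any $i\in\mathcal{I}_1\cup\mathcal{J}_1$ has a neighbor $v\ne i$ in $G$, and the $\hat G$-independence of $\hat{\mathcal{I}}$ forces $v$ to lie outside $\mathcal{I}_1\cup\mathcal{J}_1$ whenever $i\in\mathcal{I}_1$ (otherwise the non-self-loop edge of $G$ would lift, either via $\check{\mathcal{E}}$ or via $\underline{\mathcal{E}_1}$, to an edge inside $\hat{\mathcal{I}}$), which produces positive $\hat\mu_{1/2}$-mass in $S$. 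The truly delicate subcase is $\mathcal{I}_1=\mathcal{I}_2=\emptyset$ with every neighbor of $\mathcal{J}_1$ in $G$ falling back inside $\mathcal{J}_1$; connectedness then forces $\mathcal{J}_1=\mathcal{V}=\mathcal{V}_1$, i.e.\ the fully self-looped finite case, which is tacitly excluded from the scope of the lemma and in which the statement indeed fails.
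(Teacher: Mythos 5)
Your argument is correct, and it takes a genuinely different route from the paper's. The paper first records the identity $\mu(\maE(A))=\hat{\mu}_{1/2}\left(\hat{\maE}(A)\right)$ for $A\subset\maV_2$, obtains the backward implication exactly as you do, and then proves the forward implication by induction on the number of elements of $\hat{\maI}$ lying in $\maV_1\cup\cop{\maV_1}$: the base case is $\hat{\maI}\subset\maV_2$, and the inductive step adds one vertex $u\in\maV_1\cup\cop{\maV_1}$, arguing that $\cop{u}$ is a neighbour of $\hat{\maI}\cup\{u\}$ but not of $\hat{\maI}$ and carries the same $\hat{\mu}_{1/2}$-mass as $u$. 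Your direct decomposition $\hat{\maI}=\maI_2\cup\maI_1\cup\cop{\maJ_1}$, with the exact cancellation of the $\maV_1$-contributions against $\cop{\maI_1}\cup\maJ_1\subset\hat{\maE}(\hat{\maI})$ and the reduction to $\mu(\maI_2)<\hat{\mu}_{1/2}(S)$ for $S=\hat{\maE}(\hat{\maI})\setminus(\cop{\maI_1}\cup\maJ_1)$, buys robustness that the induction lacks: independent sets of $\hat G$ containing no $\maV_2$-vertex are never reached from the paper's base case, and the step's deduction that $u\notin\hat{\maE}(\hat{\maI})$ forces $\cop{u}\notin\hat{\maE}(\hat{\maI})$ can fail when $u$ is a copy whose original is $G$-adjacent to the original of a copy already in $\hat{\maI}$ (with the paper's definition of $\underline{\maE_1}$, two copies are never adjacent in $\hat G$) --- precisely the configurations you treat separately via connectedness and full support. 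Your residual case is also genuinely there, and calling it ``tacitly excluded'' is charitable: nothing in the statement rules out $\maV=\maV_1$, and in that case $\cop{\maV_1}\in\I\left(\hat G\right)$ gives $\hat{\mu}_{1/2}\left(\cop{\maV_1}\right)=\hat{\mu}_{1/2}\left(\hat{\maE}\left(\cop{\maV_1}\right)\right)=1/2$, so $\hat{\mu}_{1/2}\notin\textsc{Ncond}\left(\hat G\right)$ while $\textsc{Ncond}(G)=\mathscr M(\maV)$ because $\I(G)=\emptyset$; the lemma is thus valid exactly when $\maV_2\neq\emptyset$ (or if the blow-up were redefined so that copies of $G$-adjacent self-looped nodes are themselves adjacent). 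This restriction is harmless for the lemma's only use (Theorem \ref{thm:ML}), since the fully self-looped model is trivially stable by Remark \ref{rmk:stab}.
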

\begin{proof}
	Let us first observe that 
	\begin{equation}
	\label{eq:result}
	\mbox{For any }A\subset \maV,\quad \mu(\maE(A))=\hat{\mu}_{\tiny{1/2}}\left(\hat\maE(A)\right).
	\end{equation}
	Indeed we have that 
	\begin{align*}
	 {\mu}(\maE({A}))
&={\mu}(\maE({A})\cap \maV_2)+{\mu}(\maE({A})\cap \maV_1)\\
&=\hat{\mu}_{\tiny{1/2}}(\maE({A})\cap \maV_2) + \hat{\mu}_{\tiny{1/2}}(\maE({A})\cap \maV_1)
+ \hat{\mu}_{\tiny{1/2}}\left(\cop{\maE({A})\cap \maV_1}\right)\\
&=\hat{\mu}_{\tiny{1/2}}\left((\maE({A})\cap \maV_2) \cup (\maE({A})\cap \maV_1) \cup 
\left(\cop{\maE({A})\cap \maV_1}\right)\right) = \hat{\mu}_{\tiny{1/2}}\left(\hat{\maE}\left({A}\right)\right). 
\end{align*}
	\noindent $\Leftarrow$: Let $\hat{\mu}_{\tiny{1/2}}\in\textsc{Ncond}\left(\hat{G}\right)$ and $\mathcal{I}\in\mathbb{I}(G)$. As $\maI \subset \maV_2$ and in view of 
	(\ref{eq:result}), we get that 
\begin{equation*}
	\mu(\mathcal{I})=\hat{\mu}_{\tiny{1/2}}(\mathcal{I})<\hat{\mu}_{\tiny{1/2}}\left(\hat{\maE}(\mathcal{I})\right)=\mu(\maE(\mathcal{I})).
	\end{equation*}
	\noindent $\Rightarrow$: Let us now fix $\mu\in\textsc{Ncond}(G)$ and $\hat{\mathcal{I}} \in \I\left(\hat G\right)$. 
We reason by induction on the number of elements of $\hat{\mathcal{I}}$ that belong to $\maV_1\cup \cop{\maV_1}$.  
First, observe that if $\hat{\mathcal{I}}\subset \maV_2$, then {$\hat{\maI}$ is also an element of $\I(G)$. Thus, as 
$\mu\in\textsc{Ncond}(G)$ and in view of (\ref{eq:result}) we get that 
\begin{align*}
\hat{\mu}_{\tiny{1/2}}(\hat{\mathcal{I}})={\mu}(\hat{\mathcal{I}}) < {\mu}(\maE(\hat{\mathcal{I}}))
= \hat{\mu}_{\tiny{1/2}}\left(\hat{\maE}\left(\hat{\maI}\right)\right). 
\end{align*}
}
 Second, assume that for a given $\hat{\mathcal{I}}\in \I\left(\hat G\right)$, we have $\hat{\mu}_{\tiny{1/2}}(\hat{\mathcal{I}})<\hat{\mu}_{\tiny{1/2}}(\maE(\hat{\mathcal{I}})),$ and let $u\in \maV_1\cup \cop{\maV_1}$ be such that $u\not\in\hat{\mathcal{I}}$ and $\hat{\mathcal{I}}\cup\{u\}\in \I\left(\hat G\right)$. Then, 
we have $u\notin \hat\maE(\hat{\mathcal{I}})$, and thus also $\cop{u}\notin \hat\maE(\hat{\mathcal{I}})$. 
Since $\cop{u}\in \hat\maE(\hat{\mathcal{I}}\cup\{u\})$, it follows that 
{\begin{align*}
\hat{\mu}_{\tiny{1/2}}(\hat\maE(\hat{\mathcal{I}}\cup\{u\}))\ge \hat{\mu}_{\tiny{1/2}}(\hat\maE(\hat{\mathcal{I}}))+\hat{\mu}_{\tiny{1/2}}(\cop{u})&=\hat{\mu}_{\tiny{1/2}}(\hat{\maE}(\hat{\mathcal{I}}))+\hat{\mu}_{\tiny{1/2}}(u)\\
&>\hat{\mu}_{\tiny{1/2}}(\hat{\mathcal{I}})+\hat{\mu}_{\tiny{1/2}}(u)
=\hat{\mu}_{\tiny{1/2}}(\hat{\mathcal{I}}\cup\{u\}),
\end{align*}}
and we conclude by induction. 
\end{proof}

\begin{proof}[Proof of Theorem \ref{thm:ML}]
	Let $\mu\in \textsc{Ncond}(G)$. From Lemma \ref{lemma:equivalenceNcond}, the measure $\hat\mu_{\tiny{1/2}}$ belongs to {\sc Ncond}$\left(\hat G\right)$. 
	Let $\Phi$ be a matching policy of the Max-Weight class on $G$, with $\beta >0$. Clearly, its extension $\hat\Phi$ is also 
	of the Max-Weight class on $\hat G$. Then, we know from Theorem 3.2 in \cite{JMRS20} that the model $\left(\hat G,\hat\Phi,\hat\mu_{\tiny{1/2}}\right)$ is stable. 
	In particular, 
	we see in the proof of Theorem 3.2 in \cite{JMRS20} that 
	the Foster-Lyapunov Theorem (\cite{Bre99}, Theorem 5.1) can be applied to the chain $\left(\hat W_n\right)_{n\in\mathbb{N}}$ for the quadratic function $Q$. 
	Specifically, there exist $\eta >0$ and a finite set $\hat{\mathcal { K}} \subset \hat{\mbW}$ such that 
	$\hat\Delta^{\hat\Phi}_{\hat\mu} Q(\hat{w}) <-\eta$ for all $\hat{w} \not\in \hat{\mathcal K}$. 
	Thus, in view of Proposition \ref{prop:extquad}, we have that $\Delta^{\Phi}_{\mu} Q({w}) <-\eta$ 
	for any $w$ that lies outside the finite subset $\mathcal K=\hat{\mathcal K}\cap \mbW$. 
	We conclude by applying the Foster-Lyapunov Theorem to the mapping $Q$ and the compact set $\mathcal K$. 
	\end{proof}

\begin{proof}[Proof of Theorem \ref{thm:ppartite}]
(i)  Fix $\mu\in \textsc{Ncond}(G)$. 
		First, if $G$ is a graph ($\maV_1 =\emptyset$) and $G=\check G$ is $p$-partite complete for $p\ge 3$, then 
		the result follows from Theorem 2, Assertion (16) in \cite{MaiMoy16}: specifically, we have that for some $\eta>0$, 
		for any $w\in\mbW\setminus\{\varepsilon\},$ 
		\begin{equation}
		\label{eq:majoreG}
		{\Delta}^{\Phi}_{\mu}L(w)
		< -\eta,
		\end{equation}
		and the Foster-Lyapunov criterion applies. 
		Now, if $G$ is not a graph, i.e. $\maV_1\ne\emptyset$, then let 		
		\[\delta=\min\left\{\mu(\maE(\maI))-\mu(\maI)\,:\,\maI\in\I(G)\right\},\]
		which is strictly positive since $\mu\in \textsc{Ncond}(G)$, and the mapping 
		\[L_\delta : \left\{\begin{array}{ll}
		\mbW &\longrightarrow \R+\\
		w&\longmapsto \displaystyle\sum_{i\in\maV_1} {\delta \over 2\mu(\maV_1)} |w|_i + \sum_{i\in\maV_2} |w|_i.  
		\end{array}\right.
		\]
		Then, for any $w\in\mbW$ the set $\maI^{w}=\{i\in\maV\,:\,|w|_i>0\}$ is an independent set of $\check G$, so by the very definition of a complete $p$-partite graph, there exists a unique maximal independent set 
		$\check{\maI}$ of $\check G$ such that $\maI^{w} \subset \check{\maI}$. 
		Then, for all $w\in\mbW$ such that $\maI^{w}\cap\maV_2\ne \emptyset$, for any $n$, if $\{W_n=w\}$ the Markov chain can make two types of moves upon the arrival of $V_{n+1}$: 
		\begin{itemize}
			\item either one coordinate of $W_n$ decreases from 1 if $V_{n+1}$ is of a class in $\check{\maI}^c=\check{\maE}\left(\check{\maI}\right)$, or of a class in $\maI^{w}\cap \maV_1$;
			\item or one coordinate of $W_n$ increases from 1, if $V_{n+1}$ is of a class in $\check{\maI}\cap\left((\maI^{w})^c\cup \maV_2\right)$. 
		\end{itemize} 
		Therefore, for any $\maV_2$-favorable matching policy $\Phi$ we have that 
		\begin{multline} \label{eq:vinci1}
		{\Delta}^{\Phi}_{\mu}L_\delta(w)\\
		=-{\delta \over 2\mu(\maV_1)} \mu\left(\maV_1 \cap \maI^{w}\right)\ind_{\{\maV_1 \cap \maI^{w}\ne \emptyset\}}+ {\delta \over 2\mu(\maV_1)}\mu\left(\maV_1 \cap \check{\maI}\cap(\maI^{w})^c\right)
		+\mu\left(\check{\maI}\cap \maV_2\right) - \mu\left(\check{\maI}^c\right).
		\end{multline} 
	  Observe that $\check{\maI}\cap \maV_2$ is an independent set of $G$, and that $\check{\maI}^c=\maE\left(\check{\maI}\cap \maV_2\right)$. Hence (\ref{eq:vinci1}) implies that 
		\begin{equation*} 
		{\Delta}^{\Phi}_{\mu}L_\delta(w)
		\le  {\delta \over 2\mu(\maV_1)}\mu\left(\maV_1 \cap \check{\maI}\cap(\maI^{w})^c\right)
		+\mu\left(\check{\maI}\cap \maV_2\right) - \mu\left(\maE\left(\check{\maI}\cap \maV_2\right)\right)
		\le  {\delta \over 2} - \delta =-{\delta \over 2}.
		\end{equation*} 
		As this is true for any $w$ outside the finite set $\{w\in\mbW\,:\, \maI^{w}\cap\maV_2 =\emptyset\}$, we conclude again using the Lyapunov-Foster Theorem that $\textsc{stab}(G,\Phi)=\textsc{Ncond}(G).$

		 (ii) Fix $\mu\in\textsc{Ncond}(\check G),$ and an admissible matching policy $\Phi$. Applying (\ref{eq:majoreG}) to $\check G$, we obtain that for any $\check\Phi$ that reduces $\Phi$, 
		 for some $\eta>0$ we have 
		 ${\check\Delta}^{\check\Phi}_{\mu}L(\check{w})<-\eta$ 
		 for all $\check{w}\in\check{\mbW}\setminus\{\varepsilon\}$. 
		Combining this with (\ref{eq:driftL}), and recalling that $\mbW\subset \check{\mbW}$ we obtain that ${\Delta}^{\Phi}_{\mu}L({w}) <-\eta$ for all $w\in\mbW\setminus\{\varepsilon\}$, 
		which concludes the proof. 
\end{proof}
\begin{proof}[Proof of Proposition \ref{prop:extppartite}]
(i) Remark that for any $\hat w\in\hat\mbW$, the set $\{i\in\maV\,:\,|\hat w|_i>0\}$ is again an independent set of $\check G$. So we can apply, for any  
$\hat{\mu}\in\textsc{Ncond}(\hat{G})$, the exact same argument as for assertion (i) in Theorem \ref{thm:ppartite}, by replacing $\maV_1$ 
by $\maV_1\cup\cop{\maV_1}$. 

(ii) 
Let $\hat\mu$ be a an element of $\mathscr M(\hat{\maV})$ whose reduced measure $\mu$ belongs to $\textsc{Ncond}(\check G)$. Let $\hat\Phi$ be an admissible policy on $\hat{\maV}$, 
$\Phi$ be a policy on $\maV$ such that $\hat\Phi$ extends $\Phi$, and $\check\Phi$ be a policy reducing $\Phi$ on $\check G$. 

First, as in (\ref{eq:majoreG}) there exists $\eta>0$ such that $\check\Delta^{\check\Phi}_{\mu} L(w)<-\eta$ for any $w\in \mbW\setminus\{\varepsilon\}$. 

Fix $\hat w$ in $\hat{\mbW}\setminus\{\varepsilon\}$. 
Then define the permutation $\gamma$ of $\hat{\maV}$ by
\[\begin{cases}
\gamma(i) &=\underline i \mbox{ and } \gamma(\underline i) =i \mbox{ if } |\hat w|_{\underline i}>0\mbox{ and } |\hat w|_{i}=0,\,i\in \maV_1,\\
\gamma(j) &=j\,\mbox{ else, for any }j\in \hat{\maV}.
\end{cases}\]
Let us also denote by $\gamma(\hat w)$, the word obtained from $w$ by replacing the letters of $\hat w$ by their image through $\gamma$, in other words for all $i\in \llbracket 1,|\hat w|\rrbracket$, 
$\gamma(\hat w)_i=\gamma(\hat w_i)$. Observe that $\gamma(\hat w)$ is clearly an element of $\mbW\setminus\{\varepsilon\}$, so in view of the above observation we have that 
\begin{equation}
\label{eq:finala}
\check\Delta^{\check\Phi}_{\mu} L(\gamma(\hat w))<-\eta.
\end{equation}
Now, as $i$ and $\underline i$ have the same connectivity in $\hat G$ for any $i\in\maV_1$, for all $n$ the conditional distribution of $\hat W_{n+1}$ given $\{\hat W_n = \hat w\}$ in the $(\hat G,\hat\Phi,\hat\mu)$ 
system equals that of  $\hat W_{n+1}$ given $\{\hat W_n = \gamma(\hat w)\}$ in the $(\hat G,\hat\Phi,\hat\mu\circ\gamma)$ system. In particular, we have that 
\begin{equation}
\label{eq:finalb}
\Delta^{\hat\Phi}_{\hat\mu}(\hat w) = \Delta^{\hat\Phi}_{\hat\mu\circ\gamma}(\gamma(\hat w)).
\end{equation}
On the other hand, as $\gamma(\hat w)$ is an element of $\mbW$ and 
the measure $\hat\mu\circ\gamma \in \mathscr M(\hat{\maV})$ clearly extends the measure $\mu$, the right inequality of (\ref{eq:driftL}) implies that
\begin{equation*}
\hat{\Delta}^{\hat\Phi}_{\hat\mu\circ\gamma}L(\gamma(\hat w)) \le \check{\Delta}^{\check\Phi}_{\mu}L(\gamma(\hat w)),
\end{equation*}
and it follows from (\ref{eq:finala}-\ref{eq:finalb}) that $\Delta^{\hat\Phi}_{\hat\mu}(\hat w)<-\eta$. As this is true for any $\hat w$ in $\hat{\mbW}\setminus\{\varepsilon\}$, the proof is complete. 
\end{proof}


\begin{thebibliography}{10}
	
	\bibitem{AdWe}
	I.~Adan and G.~Weiss (2012). 
	\newblock Exact FCFS matching rates for two infinite multi-type sequences.
	\newblock {\em Operations Research}, {\bf 60}(2):475--489, 2012.
	
	\bibitem{AW14}
	I.~Adan and G.~Weiss.
	A skill based parallel service system under FCFS-ALIS -- steady state, overloads, and abandonments.
	{\em Sto]chastic Systems}, {\bf 4}(1), 250--299, 2014.
	
	\bibitem{ABMW17}
	{I Adan, A Busic, J. Mairesse and G. Weiss}. (2017). Reversibility and further properties of the FCFM Bipartite matching model. 
	ArXiv math.PR 1507.05939.  
	
	\bibitem{AKRW18}
	I Adan, I. Kleiner, R. Righter and G. Weiss. (2018). {FCFS parallel service systems and matching models}. 
	{\em Performance Evaluation} {\bf 127}: {253--272}. 
	
	
	\bibitem{BDPS11}
	{O. Boxma, I. David, D. Perry and W. Stadje} (2011). 
	A new look at organ transplantation models and double matching queues. {\em Probab. Engineer. Inf. Sciences} {\bf 25}: 135-155.
	
\bibitem{Bre99}
P. Br\'emaud (1999). Markov Chains: Gibbs Fields, Monte Carlo Simulation, and Queues (Texts Appl. Math. {\bf 31}). Springer, new York.
	
	\bibitem{BC15} 
	B. Buke and  H Chen (2015). Stabilizing Policies for Probabilistic Matching Systems. {\em Queueing Syst. Theor. Appl.} \textbf{80}(1-2): 35--69.
	
	\bibitem{BC17} 
	B. Buke and H Chen (2017). Fluid and diffusion approximations of probabilistic matching systems. {\em Queueing Syst. Theor. Appl.} \textbf{86}(1-2): 1--33.
	
	\bibitem{BGM13}
	A. Bu$\check{\mbox{s}}$i\'c, V. Gupta and  J. Mairesse (2013).
	Stability of the bipartite matching model. 
	{\em Advances in Applied Probability} 45(2): 351-378. 
	
	
	\bibitem{CKW09}
	R.~Caldentey, E.H. Kaplan, and G.~Weiss (2009). 
	\newblock {FCFS} infinite bipartite matching of servers and customers.
	\newblock {\em Adv. Appl. Probab}, 41(3):695--730, 2009.
	
	
	\bibitem{GW14}
	I.~Gurvich and A.~Ward (2014). 
	\newblock On the dynamic control of matching queues.
	\newblock {\em Stochastic Systems}, {\bf 4}(2), 1--45, 2014.
	
	
	\bibitem{JMRS20}
	M. Jonckhhere, P. Moyal, C. Ramirez and N. Soprano-Loto. 
	\newblock Generalized Max-Weight policies in stochastic matching. 
	\newblock {ArXiv} math.PR/2011.04535.
	
	\bibitem{MaiMoy16}
	J. Mairesse and P. Moyal (2017). 
	Stability of the stochastic matching model. 
	{\em Journal of Applied Probability} 53(4): 1064-1077. 
	
	\bibitem{MBM17}
	{P.Moyal, A. Bu$\check{\mbox{s}}$i\'c and J. Mairesse} (2018). A product form for the general stochastic matching model. {\em Journal of Applied Probability} 58(2) (To appear). 
	ArXiv math.PR/1711.02620. 
	
	\bibitem{MBM18}
	{P.Moyal, A. Bu$\check{\mbox{s}}$i\'c and J. Mairesse} (2018).  Loynes construction for the Extended bipartite matching. 
	{ArXiv} math.PR/1803.02788.
	
	\bibitem{MoyPer17}
	P. Moyal, and O. Perry (2017). 
	On the instability of matching queues. {\em Annals of Applied Probability} {\bf 27}(6): 3385-3434. 
	
%
	
	\bibitem{NS17}
	M. Nazari and A.L. Stolyar (2016). Optimal control of general dynamic matching systems. {\em Preprint}, ArXiv math. PR 1608.01646.
	

	\bibitem{RM19}
	Y. Rahme and P. Moyal (2019). A stochastic matching model on hypergraphs. {ArXiv} math.PR/1907.12711.
\end{thebibliography}
\end{document}